\newtheorem{theorem}{Theorem}[section]
\newtheorem{lemma}[theorem]{Lemma}
\newtheorem{cor}[theorem]{Corollary}
\newtheorem{obs}[theorem]{Observation}
\newtheorem{prop}[theorem]{Proposition}
\newtheorem*{claim*}{Claim}
\theoremstyle{definition}
\newtheorem{defn}[theorem]{Definition}
\newtheorem*{qu*}{Question}
\theoremstyle{remark}
\newcommand\N{\mathbb{N}}
\newcommand\R{\mathbb{R}}
\newcommand\Z{\mathbb{Z}}
\newcommand\cA{\mathcal{A}}
\newcommand\cB{\mathcal{B}}
\newcommand\cD{\mathcal{D}}
\newcommand\cE{\mathcal{E}}
\newcommand\cF{\mathcal{F}}
\newcommand\cG{\mathcal{G}}
\newcommand\cI{\mathcal{I}}
\newcommand\cS{\mathcal{S}}
\newcommand\cT{\mathcal{T}}
\renewcommand\Pr{\operatorname{\mathbb{P}}}
\newcommand\eps{\varepsilon}
\renewcommand\leq{\leqslant}
\renewcommand\geq{\geqslant}
\renewcommand\le{\leqslant}
\renewcommand\ge{\geqslant}
\renewcommand\to{\rightarrow}
	\def\eps{\varepsilon}
	\def\Ex{\mathbb{E}}
	\def\R{\mathbb{R}}
	\def\Z{\mathbb{Z}}
\def\Bin{\textup{Bin}}
	\def\<{\langle }
	\def\>{\rangle }
\title{The typical structure of sets with small sumset}
\author{Marcelo Campos \and Maur\'icio Collares \and Robert Morris \and \\ Natasha Morrison \and Victor Souza}
\address{Instituto de Ci\^encias Exatas, Universidade Federal de Minas Gerais, Belo Horizonte, MG, Brazil}
\email{mauricio@collares.org}
\address{IMPA, Estrada Dona Castorina 110, Jardim Bot\^anico, Rio de Janeiro, 22460-320, Brazil}
\email{\{rob,souza,marcelo.campos\}@impa.br}
\address{Department of Mathematics and Statistics, University of Victoria, David Turpin Building, 3800 Finnerty Road, Victoria, B.C., Canada V8P 5C2} \email{nmorrison@uvic.ca}
\thanks{The first author was partially supported by CNPq, the second author by PRPq/UFMG (ADRC 11/2017), the third author by CNPq (Proc. 303275/2013-8) and FAPERJ (Proc. 201.598/2014), the fourth author by a CNPq bolsa PDJ, and the fifth author by CAPES}
\begin{document}

\maketitle

\begin{abstract}
In this paper we determine the number and typical structure of sets of integers with bounded doubling. In particular, improving recent results of Green and Morris, and of Mazur, we show that the following holds for every fixed $\lambda > 2$ and every $k \ge (\log n)^4$: if $\omega \to \infty$ as $n \to \infty$ (arbitrarily slowly), then almost all sets $A \subset [n]$ with $|A| = k$ and $|A + A| \le \lambda k$ are contained in an arithmetic progression of length $\lambda k/2 + \omega$.
\end{abstract}

\section{Introduction}

One of the central objects of interest in additive combinatorics is the sumset 
$$A + B := \{ a+b : a \in A, \, b \in B \}$$ 
of two sets $A,B \subset \Z$. A cornerstone of the theory is the celebrated theorem of Fre\u{\i}man~\cite{F66,F73} (later reproved by Ruzsa~\cite{R94}), which states that if $|A+A| \le \lambda |A|$, then $A$ is contained in a generalised arithmetic progression\footnote{That is, a set of the form $P = \big\{ a + i_1 d_1 + \cdots + i_s d_s : i_j \in \{0,\ldots,k_j\} \big\}$ for some $a,d_1,\ldots,d_s,k_1,\ldots,k_s \in \N$.} of dimension $O_\lambda(1)$ and size $O_\lambda(|A|)$, where the implicit constants depend only on $\lambda$. For an overview of the area, see the book by Tao and Vu~\cite{TV}, or the surveys by Green~\cite{G14} and Sanders~\cite{S13}. 

In this paper we will be interested in the number and typical structure of sets with small sumset. This study of this problem was initiated in 2005 by Green~\cite{G05}, who was motivated by applications to random Cayley graphs, and in recent years there has been significant interest in related questions~\cite{ABMS,BLS,BLST,C19,DKLRS,GM}. In particular, Alon, Balogh, Morris and Samotij~\cite{ABMS} conjectured that there are at most
$$2^{o(k)} {\lambda k / 2 \choose k}$$
sets $A \subset \{1,\ldots,n\}$ of size $k$ with $|A+A| \le \lambda k$. This conjecture was proved in the case $\lambda = O(1)$ by Green and Morris~\cite{GM}, and for all $\lambda = o\big( k / (\log n)^3 \big)$ by Campos~\cite{C19}, who moreover (improving a result of Mazur~\cite{Mazur}) showed that almost all such sets are `almost contained' in an arithmetic progression of length $\lambda k / 2 + o(\lambda k)$ (see Theorem~\ref{thm:M:stability}, below). 

Here we will build on this earlier work, and obtain a significantly more precise structural description in the case $\lambda = O(1)$. 
For each $\lambda \ge 3$ and $\eps > 0$, define
\begin{equation}\label{def:c}
c(\lambda,\eps) \,:=\, 2^{20} \lambda^2 \log(1/\eps) \,+\, 2^{560} \lambda^{32}.
\end{equation}
Our main theorem, which determines (up to an additive constant) the length of the smallest arithmetic progression containing a typical set with bounded doubling\footnote{We (informally) call $|A+A|/|A|$ the \emph{doubling} of $A$, so $A$ has bounded doubling if $|A+A| = O(|A|)$.}, 
is as follows. 

\begin{theorem}\label{thm:structure}
Fix $\lambda \ge 3$ and $\eps > 0$, let $n \in \N$ be sufficiently large, and let $k \ge (\log n)^4$. Let $A \subset [n]$ be chosen uniformly at random from the sets with $|A| = k$ and $|A+A| \le \lambda k$. Then there exists an arithmetic progression $P$ with
$$A \subset P \qquad \text{and} \qquad |P| \le \frac{\lambda k}{2} + c(\lambda,\eps)$$ 
with probability at least $1 - \eps$.
\end{theorem}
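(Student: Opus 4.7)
The plan is to bootstrap the stability theorem of Campos (Theorem~\ref{thm:M:stability}), which guarantees that a uniformly random valid $A$ is, with probability $1-o(1)$, almost contained in an AP of length $\lambda k/2 + o(\lambda k)$, into full containment in an AP whose length exceeds $\lambda k/2$ only by an additive constant. First, applying Theorem~\ref{thm:M:stability} with error parameter $\eps/3$, I obtain (with failure probability at most $\eps/3$) an AP $Q \subset [n]$ of length $|Q| \le \lambda k/2 + o(\lambda k)$ such that $|A \setminus Q| = o(k)$. After dilating by the common difference of $Q$, I may assume $Q = \{0,1,\ldots,L-1\}$ and write $A_{\mathrm{in}} := A \cap Q$ and $A_{\mathrm{out}} := A \setminus Q$.

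Next, and most delicately, I need to show that $A_{\mathrm{out}} = \emptyset$ with probability at least $1 - \eps/3$. For each $u \in [n] \setminus Q$, the translate $u + A_{\mathrm{in}}$ lies inside $A + A$ and is essentially disjoint from $A_{\mathrm{in}} + A_{\mathrm{in}} \subset [0, 2L-2]$ whenever $u$ is suitably far from $Q$; more generally, distinct translates $u + A_{\mathrm{in}}$, $u' + A_{\mathrm{in}}$ for $u, u' \in A_{\mathrm{out}}$ are pairwise nearly disjoint. This gives
\[
|A+A| \;\ge\; |A_{\mathrm{in}} + A_{\mathrm{in}}| + |A_{\mathrm{out}}| \cdot |A_{\mathrm{in}}| - O(|A_{\mathrm{out}}|^2),
\]
so $|A_{\mathrm{out}}| \le \lambda - 2 + o(1)$, and moreover the outliers are forced into a window of size polynomial in $\lambda$ around $Q$ (otherwise the overlap terms cannot compensate for the sumset budget). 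Enumerating the few resulting shapes of $(A_{\mathrm{in}}, A_{\mathrm{out}})$ and dividing by the Green--Morris baseline count $(1+o(1))\binom{\lambda k/2}{k}$ of valid $A$ per container AP shows that configurations with nonempty $A_{\mathrm{out}}$ contribute only a $o(1)$-fraction; the constant $2^{560}\lambda^{32}$ in~\eqref{def:c} is precisely the price of this enumeration.

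Finally, conditional on $A \subset Q$, let $P$ be the shortest AP containing $A$, rescaled so that $P = [0, \ell - 1]$ with $\ell = \lambda k/2 + t$ and $\{0, \ell-1\} \subset A$. The number of subsets $A \subset [0,\ell-1]$ of size $k$ with $\{0, \ell-1\} \subset A$ and $|A+A| \le \lambda k$ is tightly constrained by Freiman-type inequalities, and comparing this count with the baseline count $\binom{\lambda k/2}{k}$ yields the tail bound
\[
\Pr\big(|P| \ge \lambda k/2 + t \,\big|\, A \subset Q\big) \;\le\; \exp\!\big(-t/(2^{20}\lambda^2)\big),
\]
so that choosing $t = c(\lambda,\eps)$ makes the right-hand side at most $\eps/3$, and a union bound over the three stages completes the proof. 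The principal obstacle is the outlier-elimination step: outliers sitting just outside $Q$ only marginally inflate $|A+A|$, so ruling them out demands a careful container-style enumeration of all near-$Q$ configurations, paralleling the analysis in~\cite{C19}.
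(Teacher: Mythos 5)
Your outline founders at the outlier-elimination step, and the inequality you base it on is false. The translates $u+A_{\mathrm{in}}$ for $u\in A_{\mathrm{out}}$ are \emph{not} pairwise nearly disjoint when two outliers are close to each other (the translates then almost coincide), and $u+A_{\mathrm{in}}$ is disjoint from $A_{\mathrm{in}}+A_{\mathrm{in}}$ only when $u\notin A_{\mathrm{in}}+A_{\mathrm{in}}-A_{\mathrm{in}}$, i.e.\ when $u$ is at distance roughly $\lambda k/2$ beyond $Q$, not merely ``suitably far''. Concretely, take $A=\{0,\dots,k-m-1\}\cup\{3\lambda k/4,\dots,3\lambda k/4+m-1\}$ with any $m\le\delta k$, say $m=\lfloor\sqrt{k}\rfloor$: then $A+A$ lies in three intervals of total length at most $3k\le\lambda k$, the best progression $Q$ of length $\lambda k/2+o(\lambda k)$ contains $\{0,\dots,k-m-1\}$, and $A_{\mathrm{out}}$ is the whole cluster of $m$ elements, so your displayed bound would force $\lambda k\ge |A_{\mathrm{out}}|\,|A_{\mathrm{in}}|-O(|A_{\mathrm{out}}|^2)\approx mk$, which is absurd, and the conclusion $|A_{\mathrm{out}}|\le\lambda-2+o(1)$ fails badly. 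This is not a technicality: sets with $b(A)=|A\setminus[\lambda k/2]|$ as large as $\delta k=\Theta_\lambda(k)$ exist and are exactly the ``dense'' regime of Definition~\ref{def:I} and Lemma~\ref{lem:dense}; there is no short list of ``shapes'' to enumerate, and proving that such configurations are rare is the bulk of the paper. Even the paper's analogue of your step, Lemma~\ref{lem:stability:second}, which only removes points outside a progression of length $3\lambda k/2$, is not deterministic: it is a counting argument combining the container-based Corollary~\ref{cor:prob:sumset} with Ruzsa covering and Pl\"unnecke--Ruzsa (Lemmas~\ref{lem:RCL} and~\ref{lem:PR}).

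The second gap is your step 3. The tail bound $\Pr\big(\ell(A)\ge\lambda k/2+t\mid A\subset Q\big)\le\exp(-t/2^{20}\lambda^2)$, which you attribute to ``Freiman-type inequalities'', is precisely the main technical content of the paper: it amounts to Lemmas~\ref{lem:sparse} and~\ref{lem:dense} (the bound $e^{-r/2^{19}\lambda^2}\binom{\lambda k/2}{k}$ on the number of sets in $\cI$ with $r(A)=r$), whose proofs occupy Sections~\ref{sec:spreadout}--\ref{sec:dense} and use the container theorem in three distinct ways: the probabilistic Lemma~\ref{lem:prob_sumset}, Theorem~\ref{thm:M:counting} to count the part of $A$ beyond $[\lambda k/2]$ (Lemma~\ref{lem:count:Bs}), and, in the very dense regime $r=O(\mu b)$, $\mu=\Theta(\lambda)$, containers for the missing set $M(A)=[\lambda k]\setminus(A+A)$ (Corollary~\ref{cor:M:containers:app} and Lemma~\ref{lem:finalcount}). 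No classical Freiman-type statement yields this --- Fre\u{\i}man's $3k-4$ theorem, for instance, gives nothing once the doubling is at least $3$ --- so as written you are assuming what has to be proved. Two smaller omissions: turning counts of bad sets into probabilities requires the lower bound $|\Lambda|\ge\lambda^{-3}(n^2/k)\binom{\lambda k/2}{k}$ of Lemma~\ref{lem:easy:lower}, and the union over the $\sim n^2/k$ choices of the progression must be handled as in Lemma~\ref{lem:red-to-int}; your ``baseline count per container AP'' gloss addresses neither.
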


When $\lambda$ is large and $\eps$ is very small the constant $c(\lambda,\eps)$ is not far from best possible. Indeed, a simple construction (see Section~\ref{lower:sec}) shows that with probability at least $\eps$ the smallest arithmetic progression containing $A$ has size $\lambda k / 2 + \Omega\big( \lambda^2 \log(1/\eps) \big)$.

We will use Theorem~\ref{thm:structure} to deduce the following counting result.

\begin{cor}\label{thm:counting}
For every $\lambda \ge 3$, and every $n,k \in \N$ with $(\log n)^4 \le k = o(n)$, we have
\begin{equation}\label{eq:counting}
\big| \big\{ A \subset [n] : |A| = k, \, |A+A| \leq \lambda k  \big\} \big| = \Theta_\lambda(1) \cdot \frac{n^2}{k} {\lambda k/2 \choose k}.
\end{equation}
\end{cor}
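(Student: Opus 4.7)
The plan is to deduce Corollary~\ref{thm:counting} from Theorem~\ref{thm:structure} by double-counting, with a matching lower-bound construction from $k$-subsets of arithmetic progressions; write $\cF := \{A \subset [n] : |A| = k,\; |A+A| \le \lambda k\}$. For the upper bound, apply Theorem~\ref{thm:structure} with $\eps = 1/2$: at least $|\cF|/2$ of the sets in $\cF$ are contained in some arithmetic progression of length at most $L := \lfloor \lambda k/2 \rfloor + \lceil c(\lambda, 1/2)\rceil$, which is $\lambda k/2 + O_\lambda(1)$. An AP in $[n]$ of length $\ell$ is specified by its first term and common difference, so there are at most $n^2/(\ell - 1)$ such APs, each containing $\binom{\ell}{k}$ subsets of size $k$. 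Using $1/(\ell-1) \le 2/k$ for $\ell \ge k \ge 2$ and the hockey-stick identity $\sum_{\ell=k}^{L}\binom{\ell}{k} = \binom{L+1}{k+1}$,
$$|\cF| \;\le\; 2\sum_{\ell = k}^L \frac{n^2}{\ell - 1}\binom{\ell}{k} \;\le\; \frac{4n^2}{k}\binom{L+1}{k+1} \;=\; O_\lambda\!\left(\frac{n^2}{k}\right)\binom{\lfloor \lambda k/2\rfloor}{k},$$
where the last equality uses $L - \lambda k/2 = O_\lambda(1)$ together with the elementary fact that $\binom{M+C}{k}/\binom{M}{k}$ is $O_\lambda(1)$ when $M = \lambda k/2 + O_\lambda(1)$ and $C = O_\lambda(1)$, since $(M+j)/(M+j-k) \le \lambda/(\lambda-2) + o(1)$.

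For the lower bound, set $m := \lfloor \lambda k/2 \rfloor$. For each pair $(a,d)$ with $a,d \ge 1$ and $a + (m-1)d \le n$, form the AP $P_{a,d} := \{a, a+d, \ldots, a+(m-1)d\}$ and select any of its $\binom{m-2}{k-2}$ many $k$-subsets containing both endpoints. Each such $A$ satisfies $A+A \subseteq P_{a,d}+P_{a,d}$, which has size at most $2m-1 \le \lambda k$, so $A \in \cF$. The pair $(a,d)$ is recoverable from $A$ as $a = \min A$ and $d = (\max A - a)/(m-1)$, making the construction injective. Since the number of admissible pairs is $\sum_{d=1}^{\lfloor (n-1)/(m-1) \rfloor}(n - (m-1)d) = \Omega(n^2/m)$ when $m = o(n)$, and $\binom{m-2}{k-2} = \big(k(k-1)/(m(m-1))\big)\binom{m}{k} = \Theta_\lambda(1)\binom{m}{k}$ (as $k/m$ is bounded away from $0$ and $1$), we conclude $|\cF| \ge \Omega_\lambda(n^2/k)\binom{\lambda k/2}{k}$.

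The only nontrivial input is Theorem~\ref{thm:structure} itself; once that is available, the rest is routine. In particular, fixing $\eps = 1/2$ collapses the theorem's output to the statement that almost all sets in $\cF$ sit inside an AP of length $\lambda k/2 + O_\lambda(1)$, which is precisely what is needed to match the lower-bound construction. The only thing to watch is that every $O_\lambda(1)$ shift in binomial arguments -- arising from $c(\lambda,1/2)$, from the replacement of $m$ by $m-2$, and from the hockey-stick identity -- is absorbable into the final $\Theta_\lambda(1)$, which is automatic because $\lambda > 2$ is fixed and $k \to \infty$.
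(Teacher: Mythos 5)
Your proposal is correct and follows essentially the same route as the paper: the upper bound comes from Theorem~\ref{thm:structure} (the paper uses its quantitative form, Theorem~\ref{thm:structure:quant}) with $\eps = 1/2$ plus a count of arithmetic progressions of length $\lambda k/2 + O_\lambda(1)$, and your lower-bound construction (all $k$-subsets of an AP of length $\lfloor \lambda k/2 \rfloor$ containing both endpoints, with $(a,d)$ recoverable from $\min A$ and $\max A$) is exactly the paper's Lemma~\ref{lem:easy:lower}. The only cosmetic differences are bookkeeping: you sum over AP lengths via the hockey-stick identity where the paper fixes a single length (implicitly one may trim $P$ so its endpoints lie in $A$, hence $P \subset [n]$), and the paper absorbs the finitely many small $n$ into the implicit constant, as you do implicitly.
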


The upper bound in Corollary~\ref{thm:counting} is an almost immediate consequence of Theorem~\ref{thm:structure}, and our lower bound follows from a straightforward calculation (see Sections~\ref{proof:sec} and~\ref{lower:sec}). For both bounds we obtain a constant of the form $\exp\big( \lambda^{\Theta(1)} \big)$ for $\lambda$ large, and it would be interesting to determine the correct exponent of $\lambda$. 

We remark that similar results can be deduced from our proof for all $2 < \lambda < k^{o(1)}$ (see Section~\ref{proof:sec}), but the constant given by our method tends to infinity as $\lambda \to 2$. In order to keep the calculations as simple as possible, we have chosen to focus on the case $\lambda \ge 3$. Let us note here also that the bound $k \ge (\log n)^4$ can be improved somewhat (see Theorem~\ref{thm:structure:quant}); however, some polylogarithmic factor is necessary, since (as was observed in~\cite{C19} and~\cite{GM}) the union of an arithmetic progression of length $k - \lambda + 2$ with $\lambda - 2$ arbitrary points satisfies $|A| = k$ and $|A+A| \le \lambda k$, and there are at least $\Theta(n^{\lambda})$ such sets, which is larger than~\eqref{eq:counting} when $k = o( \log n )$. It seems plausible, however, that Theorem~\ref{thm:structure} and Corollary~\ref{thm:counting} could hold (for $\lambda$ fixed) whenever $k / \log n \to \infty$. 

\pagebreak

In order to understand why Theorem~\ref{thm:structure} should be true, recall first that, by Fre\u{\i}man's theorem, a set has bounded doubling if and only if it is a subset of positive density of a generalised arithmetic progression of bounded dimension. Now, there are $O(n^{d+1})$ generalised arithmetic progressions $P$ of dimension $d$, and if $A$ were a random subset of $P$ of positive density, then $A+A$ would be unlikely to `miss' many elements of $P+P$, which implies that (typically) $|A+A| \ge (d+1+o(1)) \cdot |P|$.\footnote{The factor of $d+1$ is attained by a union of $d$ (unrelated) arithmetic progressions; for most $d$-dimensional generalised arithmetic progressions the doubling would be even larger.} 
This suggests that the number of choices for $A$ should be roughly $n^{d+1} \cdot \binom{\lambda k / (d+1)}{k}$, which (for $k \gg \log n$) is maximised by taking $d = 1$, 
and this leads to the intuition that most sets of bounded doubling should in fact be contained in an arithmetic progression of size roughly $|A+A|/2$. This intuition was partially confirmed in the papers~\cite{GM,Mazur,C19} mentioned above, which showed that there typically exists an arithmetic progression $P$ of length $(1/2 + o(1)) |A+A|$ such that $|A \setminus P| = o(|A|)$. In fact, it was shown in~\cite{C19} that this holds even when $|A+A| / |A|$ is much larger, see Theorem~\ref{thm:M:stability}, below. 


The main tool in the proof of Theorem~\ref{thm:structure} is a `container theorem' for sets with small doubling (see Theorem~\ref{thm:M:containers}, below), which was proved by Campos~\cite{C19} using the so-called method of hypergraph containers (see, e.g.,~\cite{BMS18}). More precisely, Campos used the \emph{asymmetric} container lemma of Morris, Samotij and Saxton~\cite{MSS}, which is a variant of the original container lemma of Balogh, Morris and Samotij~\cite{BMS} and Saxton and Thomason~\cite{ST}, to resolve the conjecture of Alon, Balogh, Morris and Samotij~\cite{ABMS} mentioned above, and to determine the (rough) typical structure of a set with given doubling. 

We will use this container theorem in three different ways: first, to control the rough structure of a set with bounded doubling (see Theorem~\ref{thm:M:stability} and Lemma~\ref{lem:stability:first}); then to prove a variant of a probabilistic lemma of Green and Morris~\cite{GM} (see Lemma~\ref{lem:prob_sumset}); and finally to control the fine structure of the set near the ends of the progression containing it (see Section~\ref{sec:dense}). We consider this last step to be the most interesting aspect of the proof, since we are not aware of any previous application of containers to the task of `cleaning up' a set, that is, replacing a rough structural result with a precise one. We hope that our proof will inspire further applications of this type in other combinatorial settings.

\section{An overview of the proof}\label{overview:sec}

In this section we will prepare the reader for the details of the proof by giving a rough outline of the main ideas. Let us fix $\lambda \ge 3$, and let $k \in \N$ be sufficiently large. We will mostly work with sets of integers that are `close' to being a subset of the interval $[\lambda k / 2]$, since the stability theorem proved in~\cite{C19} (see Theorem~\ref{thm:M:stability}, below) implies that almost all of the sets that we need to count are close to an arithmetic progression of length $\lambda k / 2$, and any such progression can be mapped into $[\lambda k / 2]$ (see Section~\ref{stability:sec} for the details).\footnote{For simplicity, we will assume throughout the paper that $\lambda k / 2$ is an integer.} 

Given a set $A \subset \Z$, let us write
\begin{equation}\label{def:b:and:r}
b(A) := |A \setminus [\lambda k / 2]| \qquad \text{and} \qquad r(A) := \max(A) - \min(A) - \lambda k / 2.
\end{equation}
Let us also fix $\eps > 0$ and set $\delta := 2^{-32} \lambda^{-3}$. By Lemma~\ref{lem:stability}, below, the problem will reduce to bounding the size of the following family of sets. 

\begin{defn}\label{def:I}
Let $\cI$ denote the family of sets $A \subset \{ -\lambda k/2,\ldots,\lambda k\}$ with $|A| = k$ and $|A+A| \le \lambda k$, such that  
$$b(A) \le \delta k \qquad \qquad \text{and} \qquad \qquad r(A) \ge c(\lambda,\eps),$$
and the sets $\big\{ x \in A : x \le 0 \big\}$ and $\big\{ x \in A : x > \lambda k / 2 \big\}$ are non-empty.
\end{defn}

We will partition the family $\cI$ according to the `density' of the set $B := A \setminus [\lambda k / 2]$. To be precise, set 
\begin{equation}\label{def:f}
f(\lambda) := 2^{10} \lambda^3,
\end{equation}
and say that $B$ is \emph{sparse} if $r(A) > f(\lambda) b(A)$. The following lemma, which is proved in Section~\ref{sec:spreadout}, bounds the number of sets $A \in \cI$ such that $B$ is sparse. 

\begin{lemma}\label{lem:sparse}
For every $\lambda \ge 3$ and $\eps \in (0,1)$, and every $k \in \N$, we have
$$\Big| \Big\{ A \in \cI \,:\, r(A) > f(\lambda) b(A) \Big\} \Big| \le \frac{\eps}{\lambda^3} {\lambda k/2 \choose k}.$$
\end{lemma}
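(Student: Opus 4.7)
The strategy is to partition $\cI_{\mathrm{sparse}} := \{A \in \cI : r(A) > f(\lambda) b(A)\}$ by the parameters $b = b(A)$ and $r = r(A)$, and bound each piece before summing. Decompose $A = L \cup M \cup R$ where $L = A \cap (-\infty, 0]$, $M = A \cap [1, \lambda k/2]$, and $R = A \cap (\lambda k/2, \infty)$, with $|L| = b_L$, $|R| = b_R$ (so $b_L + b_R = b$ and $b_L, b_R \ge 1$), and $m_L = -\min A$, $m_R = \max A - \lambda k/2$ (so $m_L + m_R = r$ with $m_L \ge 0$, $m_R \ge 1$).

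The first step is to enumerate $B = L \cup R$: for fixed $(b, r, m_L, m_R, b_L, b_R)$, the number of $B$'s with $\min L = -m_L$ and $\max R = \lambda k/2 + m_R$ is $\binom{m_L}{b_L - 1}\binom{m_R - 1}{b_R - 1}$, obtained by placing the interior points freely. Summing over admissible $(m_L, m_R, b_L, b_R)$ via Vandermonde gives a total of at most $r \binom{r - 1}{b - 2}$ choices of $B$ with parameters $(r, b)$.

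The crucial second step is to bound, for each $B$, the number $N(B)$ of sets $M \subset [1, \lambda k/2]$ with $|M| = k - b$ and $|A + A| \le \lambda k$. Here I would combine two kinds of information about $A + A$. First, the elementary three-region lower bound obtained by splitting $A + A$ into the tails $[-2m_L, 0]$ and $[\lambda k + 1, \lambda k + 2m_R]$ and the middle $[1, \lambda k]$: the left tail contains $\min A + (L \cup (M \cap [1, m_L]))$, of size $b_L + |M \cap [1, m_L]|$; symmetrically for the right tail; and the middle contains $M + M$, of size at least $2(k - b) - 1$. Summing and using $|A + A| \le \lambda k$ gives
\[
|M \cap \mathrm{edges}| \le (\lambda - 2)k + b + 1, \qquad \mathrm{edges} := [1, m_L] \cup [\lambda k/2 - m_R + 1, \lambda k/2],
\]
a density constraint on the two edge windows of total length $r$. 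Second, since $|M + M| \le \lambda k$, Theorem~\ref{thm:M:containers} applied to $M$ places $M$ in one of $2^{o(k)}$ near-arithmetic-progression containers of length $\lambda(k-b)/2 + o(k)$. Combining the density constraint with the container structure, I would argue that the effective container length available to $M$ shrinks by $\Omega(r)$, so that via the binomial ratio $\binom{\lambda k/2 - c}{k - b}\big/\binom{\lambda k/2}{k} \le (2/(\lambda - 2))^b e^{-2c/\lambda}$ with $c = \Omega(r)$, we obtain
\[
N(B) \le \bigl(\tfrac{2}{\lambda - 2}\bigr)^b e^{-\Omega(r/\lambda)} \binom{\lambda k/2}{k}.
\]

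Combining the two steps and summing,
\[
|\cI_{\mathrm{sparse}}| \le \binom{\lambda k/2}{k} \sum_{r \ge c(\lambda, \eps)} r \sum_{2 \le b < r/f(\lambda)} \binom{r - 1}{b - 2} \bigl(\tfrac{2}{\lambda - 2}\bigr)^b e^{-\Omega(r/\lambda)}.
\]
The sparsity constraint $b < r/f(\lambda)$ with $f(\lambda) = 2^{10}\lambda^3$, together with $\binom{r}{b} \le (er/b)^b$, keeps the inner sum at $e^{O(r \log \lambda / \lambda^3)} = e^{o(r/\lambda)}$, so the outer sum is dominated by polynomial factors times $e^{-\Omega(c(\lambda, \eps)/\lambda)}$; using $c(\lambda, \eps) \ge 2^{20}\lambda^2 \log(1/\eps)$ this is at most $\eps^{\Omega(\lambda)} \ll \eps/\lambda^3$. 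The main obstacle is the middle step: the elementary lower bound on $|A + A|$ only yields an $(\lambda - 2)k$ density constraint on $|M \cap \mathrm{edges}|$, which for $r$ of order $c(\lambda,\eps)$ (a constant in $k$) is vacuous. Converting this into a genuine exponential-in-$r$ saving requires leveraging the container theorem in a way that exploits both the sparsity of $B$ and the small doubling of $M$ simultaneously, so that the position of $\min A$ and $\max A$ forces the container for $M$ to avoid an honest subinterval of length $\Omega(r)$; making this precise is the delicate technical point.
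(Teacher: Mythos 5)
You have set up the right decomposition and the right bookkeeping (the count of $B$'s, the final summation over $b$ and $r$, and the role of $c(\lambda,\eps)\ge 2^{20}\lambda^2\log(1/\eps)$ are all in line with the paper), but the heart of the lemma --- the bound $N(B)\le (2/(\lambda-2))^b e^{-\Omega(r/\lambda)}\binom{\lambda k/2}{k}$ --- is exactly the step you leave open, and the mechanism you propose for it does not work as stated. Your elementary three-region bound only gives $|M\cap\mathrm{edges}|\le(\lambda-2)k+b+1$, which, as you note, is vacuous when $r$ is constant in $k$; and applying Theorem~\ref{thm:M:containers} to $M$ alone cannot force the container to ``shrink by $\Omega(r)$'': the hypothesis $|M+M|\le\lambda k$ places no such restriction on $M$, since a uniformly random $(k-b)$-subset of $[\lambda k/2]$ typically has $M+M$ equal to almost all of $\{2,\dots,\lambda k\}$ and sits inside full-length containers. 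The saving must come from the interaction of $M$ with $\min(A)$ and $\max(A)$, and you have not supplied the argument that extracts it.

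The paper closes this gap with a dichotomy rather than a single density constraint. Writing $Y$ for the set of elements of $A'+\{\min(B),\max(B)\}$ falling outside $[\lambda k]$ (equivalently, the elements of $A'=M$ lying in the two edge windows of total length $r$, see~\eqref{def:Y}), and $m:=r/8\lambda$: either $|Y|\le m$, in which case $M$ has far fewer than the expected $\approx 2r/\lambda$ elements in a fixed window of length $r$, and a direct binomial computation (Lemma~\ref{lem:claim2}) shows this happens for at most an $e^{-m}$ fraction of all candidate $M$'s; or $|Y|>m$, in which case $Y$ and $A'+A'$ are disjoint subsets of $A+A$, so $A'+A'$ misses at least $m$ elements of $[\lambda k]$ --- with no control needed on \emph{where} those missing elements lie --- and the container-based probabilistic lemma (Lemma~\ref{lem:prob_sumset}, via Corollary~\ref{cor:prob:sumset}) bounds the number of such $M$ by $((\lambda-1)/\lambda)^{m/2}\binom{\lambda k/2}{k-b}$ (Lemma~\ref{lem:claim1}). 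Combining the two cases gives Lemma~\ref{lem:size_of_g}, i.e.\ the $e^{-\Omega(r/\lambda^2)}$ saving per $B$, after which your summation goes through. So the container input is not ``structural containers for $M$'' but a quantitative estimate for the event that the sumset of a dense subset of $[\lambda k/2]$ misses $m$ elements; without this (or the case split that makes it applicable) the proposal does not yield the lemma.
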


To bound the number of choices for $A$, we will bound separately the choices for $B$ and $A' := A \setminus B$. 
Assume (for simplicity) that $\min(A) = 0$, so $\max(A) = \lambda k / 2 + r$. The proof of Lemma~\ref{lem:sparse} uses the following simple idea: the set $(A' + \max(A)) \setminus [\lambda k]$ typically contains about $2r/\lambda$ elements, and this restricts the size of the set $A'+ A'$, and hence the number of choices for $A'$. More precisely, we will use a straightforward counting argument when $(A' + \max(A)) \setminus [\lambda k]$ is much smaller than $r/\lambda$ (see Lemma~\ref{lem:claim2}), and an application of the container theorem 
when it is larger (see Lemma~\ref{lem:claim1}). Moreover, the assumption that $B$ is sparse allows us to (trivially) bound the number of choices for $B$.

We remark that our application of the container theorem in the proof of Lemma~\ref{lem:sparse} proceeds via a probabilistic lemma (Lemma~\ref{lem:prob_sumset}), which is a generalisation of a result of Green and Morris~\cite{GM}. This lemma gives a (close to tight) upper bound on the number of $k$-subsets of $[n]$ whose sumset missed many elements of $\{2,\ldots,2n\}$, and is proved in Section~\ref{GM:lemma:sec}, using the container theorem of Campos~\cite{C19} mentioned in the introduction. 
 
 \medskip
 \pagebreak

When $r(A) \le f(\lambda) b(A)$, we will say that the set is \emph{dense}. In Sections~\ref{sec:somewhat:dense} and~\ref{sec:dense} we will prove the following lemma, which bounds the number of dense sets in $\cI$. 

\begin{lemma}\label{lem:dense}
For every $\lambda \ge 3$ and $\eps \in (0,1)$, and every $k \in \N$, we have
$$\Big| \Big\{ A \in \cI \,:\, r(A) \le f(\lambda) b(A) \Big\} \Big| \le \frac{\eps}{\lambda^3} {\lambda k/2 \choose k}.$$
\end{lemma}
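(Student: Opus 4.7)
The plan is to partition the family $\{A \in \cI : r(A) \le f(\lambda) b(A)\}$ by the value of $b(A)$, splitting into the \emph{somewhat dense} and \emph{very dense} regimes that correspond to Sections~\ref{sec:somewhat:dense} and~\ref{sec:dense}. Throughout, decompose $A = A' \cup B$ (disjoint union) where $A' = A \cap [\lambda k/2]$ and $B = A \setminus [\lambda k/2]$, so $|B| = b$. Since $A \in \cI$ has both $\min(A) \le 0$ and $\max(A) > \lambda k/2$, the set $B$ lies in $[\min(A),0] \cup (\lambda k/2, \max(A)]$, a pair of intervals of total length $r(A)+1 \le f(\lambda) b + 1$. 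So in the dense regime $B$ is packed tightly against the two endpoints of $A$, and this is the structural feature we must exploit.

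In the somewhat dense regime --- say, $b$ below some threshold depending on $\lambda$ and $\eps$ --- I would adapt the strategy of the sparse case (Lemma~\ref{lem:sparse}). Since $|A+A| \le \lambda k$ while $A+A$ spans an interval of length $\lambda k + 2r(A)$, the sumset must miss at least $2r(A)$ integers, and after translating so that $\min(A) = 0$, Lemma~\ref{lem:prob_sumset} yields an upper bound on the number of choices for $A'$ once the endpoints and $B$ are fixed. The number of placements of $B$ inside intervals of total length at most $f(\lambda) b + 1$ is at most ${f(\lambda) b + 1 \choose b} \le 2^{O(b \log \lambda)}$, and this factor is absorbed by the large additive constant $c(\lambda,\eps) \ge 2^{560}\lambda^{32}$ baked into the definition of $\cI$ --- provided $b$ is not too large.

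The very dense regime, where $b$ is close to its upper bound $\delta k$, is the main obstacle. Here the trivial count ${f(\lambda) b + 1 \choose b}$ for the positions of $B$ is no longer affordable. The key idea, foreshadowed in the introduction, is to apply the container theorem (Theorem~\ref{thm:M:containers}) to constrain $A'$ and $B$ \emph{simultaneously}: when many elements of $B$ are packed into the short interval pair near $0$ and $\lambda k/2$, $B$ behaves like an approximate arithmetic progression, and its numerous sums with $A'$ combine with the constraint on $A+A$ to pin down the fine structure of $A$ near the boundary. Concretely, the plan is to produce a small family of containers, each resembling an arithmetic progression of length $\lambda k/2 + O(r(A))$ with a tightly controlled set of allowed defects at the two ends, and then to count $A$ inside each container using $|A|=k$.

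The hard part is executing this joint container step. Keeping (number of containers) $\times$ (count per container) below $\eps \lambda^{-3} {\lambda k/2 \choose k}$ cannot be done with the rough structural output of Theorem~\ref{thm:M:stability} alone; it requires the genuinely new ``cleaning up'' application of containers that the authors emphasise, where containers deliver fine boundary information rather than just a bulk approximation. That extra mileage is exactly what upgrades the $o(\lambda k)$ error of earlier work to the additive constant $c(\lambda,\eps)$ appearing in Theorem~\ref{thm:structure}.
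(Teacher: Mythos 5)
There is a genuine gap, and it sits exactly where the real work of the paper is. First, your case split and your accounting in the ``somewhat dense'' regime do not work. The relevant dichotomy is not the size of $b$ but the parameter $\mu$ defined by $|(B+B)\setminus[\lambda k]| = \mu b$ (together with whether $r \lesssim \mu b$), and your claim that the trivial placement count ${f(\lambda)b+1 \choose b} = 2^{\Theta(b\log\lambda)}$ for $B$ is ``absorbed by the large additive constant $c(\lambda,\eps)$'' is false: $c(\lambda,\eps)$ only enters as a lower bound on $r$ (hence on $b$), so it cannot offset a loss that is exponential \emph{per element} of $B$. Note also that the $\ge 2r$ missing sums you invoke need not be missing from $A'+A'$ inside $[\lambda k]$ --- they may lie in the wings occupied by $B+B$ --- so the saving available for $A'$ from Corollary~\ref{cor:prob:sumset} is only of order $\big(\tfrac{\lambda-2}{\lambda}\big)^{\mu b/2}\big(\tfrac{2}{\lambda-2}\big)^{b}$. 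In the critical range $\mu = \Theta(\lambda)$ this saving is \emph{exactly} cancelled by the number $\approx\binom{\mu b/2}{b}$ of candidate sets $B$ with that doubling (which is why the paper counts the $B$'s with Theorem~\ref{thm:M:counting} via Lemma~\ref{lem:count:Bs} rather than trivially, and even then gains nothing net); with the trivial count $(ef(\lambda))^{b}$ your bound exceeds $\binom{\lambda k/2}{k}$ by an exponential factor in $b$ for every $b$, small or large. The non-critical cases are salvageable along your lines, but only after introducing $\mu$ and the $\mu$-sensitive count of the $B$'s (this is the content of Lemmas~\ref{lem:r:vs:b} and~\ref{lem:big:or:small:sumset}).

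Second, the case that actually forces the new idea --- $r \le 2^{11}\mu b$ and $\lambda/2 \le \mu \le 2\lambda$, which can occur for \emph{any} admissible $b$, not just $b$ near $\delta k$ --- is left as an unexecuted sketch, and the sketch points in a different direction from what works. Because of the exact cancellation above, no argument of the form ``count $B$, then count $A'$ by how much $A'+A'$ misses'' can produce a bound decaying in $r$; one needs an additional saving. The paper obtains it by applying Corollary~\ref{cor:M:containers} not to $A$ itself but to the pair consisting of the missing set $M(A)=[\lambda k]\setminus(A+A)$ and the boundary part $A\cap Y(b)$ (after Lemma~\ref{lem:hitmiddle} guarantees $M(A)\subset X(b)$ for almost all such $A$), yielding containers $(C,D)$ with $M(A)\subset C$, $A\cap Y(b)\subset D$ and $|D|\le |Y(b)|-|C|/2+|Y(b)|^{5/6}$; the extra factor $e^{-b/32\lambda}$ per container is then extracted in Lemma~\ref{lem:finalcount} through the interplay of $S=\tilde C\cap A'$ and $T=\tilde C\cap\tilde D$, and a union bound over the $\exp(2^{50}\lambda^2 b^{7/8})$ containers finishes the count. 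Your proposed containers --- arithmetic progressions of length $\lambda k/2+O(r)$ with controlled defects --- are closer to the output of Theorem~\ref{thm:M:stability}, which, as you yourself note, is insufficient; so the decisive step of the proof is missing from the proposal.
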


The proof of Lemma~\ref{lem:dense} is significantly more difficult than that of Lemma~\ref{lem:sparse}, and is the most interesting and novel part of the argument, involving a surprising and unusual application of the container method. Set $A' := A \cap [\lambda k / 2]$ and $B := A \setminus A'$, as before, and suppose that $|B| = b$ and $|(B+B) \setminus [\lambda k]| = \mu b$. The main difficulties arise when $r = O(\mu b)$ and $\mu = \Theta(\lambda)$, and we first take care of the remaining cases in Section~\ref{sec:somewhat:dense}. 

For these `easy' cases (see Lemmas~\ref{lem:r:vs:b}  and~\ref{lem:big:or:small:sumset}) we use similar ideas to those used in the proof of Lemma~\ref{lem:sparse} (see the sketch above), except that instead of using a trivial bound, we will need to apply the container theorem (via Theorem~\ref{thm:M:counting}) in order to bound the number of choices for the set $B$ (see Lemma~\ref{lem:count:Bs}), and the calculations are significantly more delicate. In particular, we will need to use our bounds on the size of both $(A' + \max(A)) \setminus [\lambda k]$ 
and $(B+B) \setminus [\lambda k]$ to bound the size of $A' + A'$, and thus the number of choices for $A'$. 

Counting the sets with $r = O(\mu b)$ and $\mu = \Theta(\lambda)$ is the most interesting part of the proof. The key idea is to use the container theorem to obtain a collection of `containers' $(C,D)$ for the `missing' set $M(A) := [\lambda k] \setminus (A+A)$, which is typically (see Lemma~\ref{lem:hitmiddle}) contained in the set $Y+Y$, where $Y$ is the set of points that are `close' to the endpoints of $[\lambda k/2]$. The containers satisfy $M(A) \subset C$ and $A \cap Y \subset D$, and moreover $D$ misses roughly $|C|/2$ points of $Y$ (for the precise statement, see Corollary~\ref{cor:M:containers:app}). The key step (Lemma~\ref{lem:finalcount}) then uses these properties to bound the number of sets $A$ corresponding to each container. 
Taking a union bound over containers, it follows that there are at most
$$\exp\bigg( - \frac{r}{2^{19} \lambda^2} \bigg) \binom{\lambda k/2}{k}$$ 
sets $A \in \cI$ with $r(A) = r \le f(\lambda) b(A)$, and this easily implies Lemma~\ref{lem:dense}. 

The rest of the paper is organised as follows. First, in Section~\ref{containers:sec}, we recall the main results of~\cite{C19}, and deduce the container theorem we will use in the proof (Corollary~\ref{cor:M:containers}). In Section~\ref{GM:lemma:sec} we use this container theorem to prove the probabilistic lemma mentioned above (Lemma~\ref{lem:prob_sumset}), and in Section~\ref{stability:sec} we will use the results of~\cite{C19} to reduce the problem to that of bounding the size of the set $\cI$. In Section~\ref{sec:spreadout} we prove Lemma~\ref{lem:sparse}, in Sections~\ref{sec:somewhat:dense} and~\ref{sec:dense} we prove Lemma~\ref{lem:dense}, and in Section~\ref{proof:sec} we put the pieces together and prove Theorem~\ref{thm:structure}. Finally, in Section~\ref{lower:sec}, we provide two simple constructions that show that the upper bounds in Theorem~\ref{thm:structure} and Corollary~\ref{thm:counting} are not far from best possible.

\medskip

\section{The container theorem}\label{containers:sec}

In this section we will recall the main results of~\cite{C19}, which will play an important role in the proofs of our main theorems. We begin by stating 
the main container theorem from~\cite{C19}. 

\begin{theorem}[Theorem~4.2 of~\cite{C19}]\label{thm:M:containers}
Let $m \ge (\log n)^2$, let $Y \subset \Z$ with $|Y| = n$, and let $0 < \gamma < 1/4$. There is a family $\cA \subset 2^{Y+Y} \times 2^Y$ of pairs of sets $(A,B)$, of size
\begin{equation}\label{eq:number:of:original:containers}
|\cA| \leq \exp\Big( 2^{16} \gamma^{-2} \sqrt{m} \, (\log n)^{3/2} \Big),
\end{equation}
such that:
\begin{itemize}
\item[$(i)$] For each $J \subset Y$ with $|J + J| \le m$, there is $(A,B) \in \cA$ with $A \subset J+J$ and $J \subset B$.\smallskip
\item[$(ii)$] For every $(A,B) \in \cA$, $|A| \leq m$ and either $|B| \leq \frac{m}{\log n}$ or there are at most $\gamma^2 |B|^2$ pairs $(b_1, b_2) \in B \times B$ such that $b_1 + b_2 \notin A$.
\end{itemize}
\end{theorem}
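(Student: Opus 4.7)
The plan is to apply the asymmetric hypergraph container lemma of Morris, Samotij and Saxton~\cite{MSS} to a natural 3-uniform hypergraph encoding additive triples. Specifically, I would let $H$ have vertex set $V(H) := Y \sqcup (Y+Y)$, with the two parts treated asymmetrically, and hyperedges $\{y_1, y_2, y_1+y_2\}$ for every $y_1, y_2 \in Y$, each edge carrying two ``input'' endpoints in $Y$ and one ``output'' endpoint in $Y+Y$. A set $J \subset Y$ with $|J+J| \le m$ lives naturally in $H$ as a configuration whose output-projection has size at most $m$; this is precisely the asymmetric analogue of having bounded density on one side, which is the setting to which~\cite{MSS} applies.

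The proof then proceeds by running the container algorithm on $H$: iteratively build a pair $(A, B)$ with $A \subset Y+Y$ and $B \subset Y$, maintaining at every step the invariants $A \subset J+J$, $J \subset B$, and $|A| \le m$. Initialise $(A, B) = (\emptyset, Y)$ and repeat the following step: if the stopping conditions of part~(ii) are already satisfied, halt; otherwise, by averaging, some vertex of $B$ has ``bad degree'' at least $\gamma^2 |B|$ with respect to $A$, and the algorithm adds a small deterministic fingerprint (depending only on the current $(A,B)$ and on which tested vertices lie in $J$) that simultaneously grows $A$ by $\Omega(\gamma^2 |B|)$ new forced sums and prunes $B$. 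The crucial design feature, formalised in~\cite{MSS}, is that the updated pair $(A,B)$ is a function of the accumulated fingerprint $F \subset J$ alone, and not of the rest of $J$.

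Counting then uses the twin constraints $|A| \le m$ and $|B| \ge m/\log n$ to cap the number of iterations, and hence $|F|$, at $O\bigl(\gamma^{-2}\sqrt{m \log n}\bigr)$, so that $|\cA| \le \binom{n}{|F|} \le n^{|F|} \le \exp\!\bigl(O\bigl(\gamma^{-2} \sqrt{m}\,(\log n)^{3/2}\bigr)\bigr)$, matching~\eqref{eq:number:of:original:containers} after tuning the implicit constant. The main obstacle is verifying that each container step indeed contributes $\Omega(\gamma^2 |B|)$ new forced sums to $A$ when there are many bad pairs in $B \times B$: one must locate a tested vertex $y \in J$ whose bad-degree is large not only within $B$ but also within $J \cap B$, and argue that the resulting sums $y + y'$ (for $y'$ ranging over a canonical subset of $B$) can all be added to $A$ without violating $A \subset J+J$. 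This balancing act --- prune aggressively enough to bound $|F|$, but conservatively enough to preserve $J \subset B$ --- is the technical heart of the argument, and is where the additive-combinatorial structure of $H$ (its regularity and uniformly small codegrees) enters.
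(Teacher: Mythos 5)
You are proving a statement that this paper only quotes: Theorem~\ref{thm:M:containers} is Theorem~4.2 of~\cite{C19}, and its proof there does indeed go through the asymmetric container lemma of~\cite{MSS} applied to the $3$-uniform hypergraph of additive triples on $Y \sqcup (Y+Y)$, so your choice of tool and of hypergraph is the right one. The genuine gap is in the mechanics of your container algorithm: you propose to maintain the invariant $A \subset J+J$ while, at each step, adding to $A$ some $\Omega(\gamma^2 |B|)$ sums $y + y'$ with $y$ a tested vertex of $J$ and $y'$ ranging over a canonical subset of the current $B$. This cannot work: the pair $(A,B)$ must be a function of the fingerprint alone, and for $y' \in B \setminus J$ the sum $y + y'$ need not lie in $J+J$, so these sums cannot be ``forced'' into $A$ without querying $J$ at $\Omega(\gamma^2 |B|)$ further points per step --- which would destroy the fingerprint bound and hence~\eqref{eq:number:of:original:containers}. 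You flag exactly this as ``the main obstacle'', but it is not a technical verification left to be completed; as formulated, that step is unfixable.

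The argument in~\cite{C19} avoids it by dualising. A set $J \subset Y$ with $|J+J| \le m$ is encoded as the independent pair $(I,J)$ with $I := (Y+Y) \setminus (J+J)$ in the two-coloured hypergraph whose edges consist of one vertex $y_1 + y_2 \in Y+Y$ and two vertices $y_1, y_2 \in Y$; the asymmetric container lemma (after verifying its degree and codegree hypotheses for this hypergraph, and choosing the two fingerprint sizes to optimise the count) produces a family of pairs $(C,B)$, depending only on the fingerprints, with $I \subset C$, $J \subset B$, and either $|B| \le m/\log n$ or at most $\gamma^2 |B|^2$ edges inside $(C,B)$. Setting $A := (Y+Y) \setminus C$ then gives $A \subset J+J$ automatically (since $C \supset (Y+Y)\setminus(J+J)$), the ``few edges inside $(C,B)$'' guarantee is literally ``at most $\gamma^2|B|^2$ pairs $(b_1,b_2) \in B \times B$ with $b_1 + b_2 \notin A$'', and $|A| \le m$ holds after discarding containers not associated to any admissible $J$. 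In other words, the algorithm never adds sums to $A$; it only shrinks the container $C$ for the missing set, which is what makes fingerprint-only dependence compatible with $A \subset J+J$. Your counting heuristic (fingerprints of size $O(\gamma^{-2}\sqrt{m \log n})$, hence $|\cA| \le n^{O(\gamma^{-2}\sqrt{m\log n})}$) has the right shape, but it only becomes meaningful once the argument is set up in this complementary form.
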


The reader may find it useful to imagine the statement of Theorem~\ref{thm:M:containers} as saying that for each set $J \subset Y$, there exists a `container' $(A,B) \in \cA$ such that 
$$J \subset B, \qquad B + B \approx A \qquad \text{and} \qquad A \subset J+J.$$ 
Moreover, and crucially, the number of containers is sub-exponential in $m$. 
 

We will also use the following two consequences of Theorem~\ref{thm:M:containers}, which were both proved in~\cite{C19}. The first determines the number of sets $A \subset [n]$ with $|A| = k$ and $|A+A| \le \lambda k$ up to a factor of $2^{o(k)}$. We will use it in Section~\ref{sec:somewhat:dense} to bound the number of choices for $A \setminus [\lambda k / 2]$. 

\begin{theorem}[Theorem~4.1 of~\cite{C19}]\label{thm:M:counting}
Let $n,k \in \N$, and let $2 < \lambda < 2^{-36}\frac{k}{(\log n)^3}$. The number of sets $A \subset [n]$ with $|A| = k$ such that $|A+A| \le \lambda k$ is at most 
$$\exp\Big(2^9 \lambda^{1/6} k^{5/6} \log k \sqrt{\log n}\Big){\lambda k / 2 \choose k}.$$
\end{theorem}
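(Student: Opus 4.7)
The plan is to apply Theorem~\ref{thm:M:containers} with $Y = [n]$, $m = \lambda k$, and a density parameter $\gamma$ to be optimised at the end. Each $k$-set $A \subset [n]$ with $|A+A| \le \lambda k$ is contained in the second coordinate $B$ of some pair $(X,B) \in \cA$, and the family $\cA$ has size at most $\exp\bigl(2^{16} \gamma^{-2} \sqrt{\lambda k}\,(\log n)^{3/2}\bigr)$. The strategy is then to bound the number of $k$-subsets of each container and sum over $\cA$.

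For each container $(X,B)$ the theorem gives two cases. If $|B| \le \lambda k / \log n$ then $\binom{|B|}{k}$ is negligible compared with $\binom{\lambda k/2}{k}$ (indeed identically zero unless $\lambda \ge \log n$, and otherwise smaller by a factor of $(2/\log n)^k$). In the substantive case the number of pairs $(b_1,b_2) \in B^2$ with $b_1 + b_2 \notin X$ is at most $\gamma^2 |B|^2$, and $|X| \le \lambda k$. A Cauchy--Schwarz argument then gives additive energy $E(B) \ge (1-\gamma^2)^2 |B|^4 / |X|$; feeding this through the Balog--Szemer\'edi--Gowers theorem and Fre\u{\i}man's theorem produces a large subset $B' \subset B$ with genuine small doubling, and I would upgrade this to the statement that $B$ is (almost) contained in a one-dimensional arithmetic progression $P_B$ of length $\lambda k/2 + O(\gamma k)$. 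Any exceptional points of $B \setminus P_B$ are few enough to be absorbed by a separate union bound, at a cost of a factor logarithmic in $k$, and the resulting count of $k$-subsets per container is at most $\exp\bigl(O(\gamma k \log k)\bigr)\binom{\lambda k/2}{k}$.

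Summing over containers and over the $O(n^2)$ candidate arithmetic progressions of each length (a factor easily absorbed) yields a total bound of the form
\[
\exp\bigl(C_1 \gamma^{-2} \sqrt{\lambda k}\,(\log n)^{3/2} + C_2 \gamma k \log k\bigr) \binom{\lambda k/2}{k},
\]
and optimising $\gamma$ by balancing the two terms gives $\gamma \sim (\lambda/k)^{1/6}\sqrt{\log n}$ (up to $\log k$ factors) and an overall exponent of order $\lambda^{1/6} k^{5/6} \log k \sqrt{\log n}$, as in the statement. The main obstacle is the inner Fre\u{\i}man-type step: the trivial Pl\"unnecke bound $|B+B| \le \lambda k + \gamma^2 |B|^2$ is too weak when $|B|$ is comparable to $k$, and Fre\u{\i}man's $3k-4$ theorem alone only places $B$ in an AP of length roughly $(\lambda-1)k$, producing a binomial overhead of $\exp(\Theta(k))$ that would destroy the bound. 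Pinning the AP length down to $\lambda k/2$ up to an overhead linear in $\gamma$ requires a quantitative, robust version of the heuristic (articulated in the introduction) that typical subsets of an AP of length $L$ already have sumsets of size close to $2L$; making this precise in the presence of the $\gamma^2$-level noise in the container guarantee, and tracking the exponent sharply, is the delicate quantitative heart of the argument.
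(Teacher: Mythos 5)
Your outer framework (containers via Theorem~\ref{thm:M:containers} with $Y=[n]$, $m=\lambda k$, a union bound over the at most $\exp(2^{16}\gamma^{-2}\sqrt{\lambda k}(\log n)^{3/2})$ containers, and the optimisation $\gamma\sim\lambda^{1/6}k^{-1/6}\sqrt{\log n}$) is exactly right, and the condition $\lambda<2^{-36}k/(\log n)^3$ is what makes this choice of $\gamma$ admissible. The gap is in the per-container count, and you have identified it yourself without closing it: the step in which $B$ is ``upgraded'' to being almost contained in a one-dimensional arithmetic progression of length $\lambda k/2+O(\gamma k)$ via Cauchy--Schwarz, Balog--Szemer\'edi--Gowers and Fre\u{\i}man is not proved, and it cannot be made to work quantitatively. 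BSG and Fre\u{\i}man only produce a large subset $B'\subset B$ lying in a generalised arithmetic progression of dimension $O_\lambda(1)$ and size $C(\lambda)|B'|$ with enormous (certainly not $1/2+O(\gamma)$) constants, so this route cannot pin the container down to length $\lambda k/2+O(\gamma k)$, which is precisely what the sharp main term $\binom{\lambda k/2}{k}$ requires; the proposed absorption of exceptional points is likewise unquantified and, if $|B\setminus P_B|$ were of order $k$, would cost $\exp(\Theta(k))$.

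The correct (and much simpler) step needs no structural information about $B$ at all, only a bound on its cardinality. If $(A,B)$ is a container with at most $\gamma^2|B|^2$ pairs $(b_1,b_2)\in B\times B$ satisfying $b_1+b_2\notin A$ and $|A|\le\lambda k$, then the elementary supersaturation lemma (Lemma~\ref{supersat}, i.e.\ Corollary~3.3 of~\cite{C19}; the same ``remove the extreme points and repeat'' argument as in Lemma~\ref{prop:trivsupersat}) forces $|B|<(1/2+\gamma)|A|\le(1/2+\gamma)\lambda k$: otherwise the at least $2|B|-1$ distinct pairwise sums of $B$, robustly counted, would already give more than $\gamma^2|B|^2$ pairs summing outside $A$. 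Hence the number of $k$-subsets of any container is at most $\binom{(1/2+\gamma)\lambda k}{k}\le(1+2\gamma)^k\binom{\lambda k/2}{k}\le e^{2\gamma k}\binom{\lambda k/2}{k}$ (the case $|B|\le\lambda k/\log n$ being negligible, as you say), and your optimisation of $\gamma$ then yields the stated bound with room to spare for the $\log k$ factor. This is how the theorem is proved in~\cite{C19} (the present paper only imports it), and it is the same mechanism used in Appendix~\ref{app:stability} here, where Claims~1 and~2 bound $|B|$ by $(1+2\eps)\lambda k/2$ via Lemma~\ref{supersat} before any counting is done; replacing your BSG/Fre\u{\i}man step by this lemma turns your outline into a complete proof.
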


The second gives structural information about  
a typical set with small doubling; we will use it in Section~\ref{stability:sec}. The following is a slight generalisation of~\cite[Theorem~5.1]{C19}, but follows from the same proof; for completeness, the details are given in Appendix~\ref{app:stability}. 

\begin{theorem}[Theorem~5.1 of~\cite{C19}]\label{thm:M:stability}
Let $n,k \in \N$ and $2 \leq \lambda \leq 2^{-120} \frac{k}{(\log n)^3}$, and let $2^{8} \lambda^{1/6} k^{-1/6} \sqrt{\log n} \le \gamma < 2^{-8}$. For all but at most
$$e^{- \gamma k} {\lambda k / 2 \choose k}$$
sets $A \subset [n]$ with $|A| = k$ and $|A+A| \le \lambda k$, the following holds: there exists $T \subset A$, with $|T| \le 2^{9} \gamma k$, such that $A \setminus T$ is contained in an arithmetic progression of size $\lambda k / 2 + 2^{7} \gamma \lambda k$.
\end{theorem}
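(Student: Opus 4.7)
My plan is to apply Theorem~\ref{thm:M:containers} with $Y = [n]$ and $m = \lambda k$, using the same parameter $\gamma$ as in the statement. This yields a family $\cA$ of pairs $(A_0, B_0)$ such that for every $A \subset [n]$ with $|A| = k$ and $|A+A| \le \lambda k$, there exists $(A_0, B_0) \in \cA$ with $A \subset B_0$ and $A_0 \subset A + A$. A direct computation using the hypothesis $\gamma \ge 2^8 \lambda^{1/6} k^{-1/6} \sqrt{\log n}$ shows that
\[
2^{16} \gamma^{-2} \sqrt{\lambda k}\,(\log n)^{3/2} \le \lambda^{1/6} k^{5/6} \sqrt{\log n} \le \frac{\gamma k}{2^8},
\]
so $|\cA| \le \exp(\gamma k / 2^8)$, a negligible factor compared to the target $e^{\gamma k}$.

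For each container $(A_0, B_0) \in \cA$, I would split according to the dichotomy in Theorem~\ref{thm:M:containers}(ii). If $|B_0| \le \lambda k / \log n$, then the total number of $k$-subsets of $B_0$ is at most $\binom{\lambda k / \log n}{k} \le (2e / \log n)^k \binom{\lambda k/2}{k}$. Since the hypotheses $\lambda \ge 2$ and $\lambda \le 2^{-120} k / (\log n)^3$ force $\log n$ to exceed any fixed constant, multiplying by $|\cA|$ yields a contribution of at most $\tfrac{1}{2}\, e^{-\gamma k} \binom{\lambda k/2}{k}$, which is within budget.

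Otherwise we have $|B_0 + B_0| \le |A_0| + \gamma^2 |B_0|^2 \le \lambda k + \gamma^2 |B_0|^2$, so $B_0$ has small additive doubling. Here I would apply a quantitative Freiman-type stability theorem to produce an arithmetic progression $P_0$ of length at most $\lambda k / 2 + 2^7 \gamma \lambda k$ together with a set $T_0 \subset B_0$ of size $|T_0| \le 2^9 \gamma k$ such that $B_0 \setminus T_0 \subset P_0$. Once this is established, every $A \subset B_0$ with $|A| = k$ satisfies the conclusion of the theorem by setting $T := A \cap T_0$: then $|T| \le 2^9 \gamma k$ and $A \setminus T \subset P_0$.

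The hard part is this last Freiman-type step, since the bound $|B_0 + B_0| \le \lambda k + \gamma^2 |B_0|^2$ does not immediately place $B_0$ in the regime of Freiman's $3k - 4$ theorem, as $|B_0|$ may be considerably larger than $\lambda k / 2$. This is where the proof of~\cite[Theorem~5.1]{C19} does most of its real work, combining Freiman-type stability with Pl\"unnecke--Ruzsa bounds to extract an AP covering all but $O(\gamma k)$ elements of $B_0$ while having the right length; the present generalisation essentially amounts to tracking the dependence on $\gamma$ through those estimates.
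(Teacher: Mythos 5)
Your container set-up (apply Theorem~\ref{thm:M:containers} with $Y=[n]$, $m=\lambda k$, and absorb the $\exp(\gamma k/2^8)$ containers) and your treatment of the $|B_0|\le m/\log n$ branch match the paper's Appendix~\ref{app:stability}, but the main branch has two genuine gaps. First, passing from the container property to $|B_0+B_0|\le \lambda k+\gamma^2|B_0|^2$ throws away exactly the information the proof needs: for $|B_0|\approx\lambda k/2$ this only bounds the doubling of $B_0$ by roughly $2+\gamma^2\lambda k/2$, which grows with $k$, so no Freiman-type stability theorem can return an AP of length $\lambda k/2+2^7\gamma\lambda k$ covering all but $O(\gamma k)$ elements. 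Worse, the conclusion you ask of this step is simply false for some containers satisfying Theorem~\ref{thm:M:containers}$(ii)$: a container $B_0$ which is a union of two far-apart APs of length $\lambda k/8$ (with $A_0=B_0+B_0$, so there are no bad pairs at all) is not within $2^9\gamma k$ elements of any AP of the required length once $\lambda$ is large. The paper avoids this by first using supersaturation (Lemma~\ref{supersat}, i.e.\ Corollary~3.3 of~\cite{C19}) to force $|B_0|\le(1+2\eps)\lambda k/2$, disposing of all containers with $|B_0|\le(1-\eps)\lambda k/2$ by a direct count of their $k$-subsets (Claim~2), and only then applying the pair-counting stability lemma (Lemma~\ref{satstability}, Corollary~3.5 of~\cite{C19}) to the surviving containers — a statement about few pairs $(b_1,b_2)$ with $b_1+b_2\notin A_0$, not about the doubling of $B_0$. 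You would need an analogue of that dichotomy and of Corollary~3.5; "track the dependence on $\gamma$ through a Freiman-type theorem" does not produce it.

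Second, even granting the correct stability input, the exceptional set of the \emph{container} has size $O(\gamma m)=O(\gamma\lambda k)$ (a $\gamma$-fraction of $\lambda k$), not $O(\gamma k)$, so your shortcut $T:=A\cap T_0$ does not give $|T|\le 2^9\gamma k$ when $\lambda$ is large. This is not a bookkeeping issue: the theorem's bound $|T|\le 2^9\gamma k$ is only achieved in the paper by an additional counting step (Claim~3), which shows that across all containers there are at most $e^{-\eps k}\binom{\lambda k/2}{k}$ sets $J\subset B_0$ with $|J\cap T_0|>2^7\eps k$; typical $k$-subsets of $B_0$ meet the (possibly much larger) exceptional set $T_0$ in only $O(\gamma k)$ elements, and the atypical ones must be counted and discarded. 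Your proposal has no analogue of this step, and without it the stated bound on $|T|$ cannot be reached.
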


\pagebreak

The upper bounds on $\lambda$ in Theorems~\ref{thm:M:counting} and~\ref{thm:M:stability} are the reason why we require the bound $k \ge (\log n)^4$ in Theorem~\ref{thm:structure} and Corollary~\ref{thm:counting}. We remark that some $\log$-factor is necessary here, since it was observed in~\cite{C19} that the conclusions of the theorems fail to hold if $k = o\big( \lambda \log n \big)$, cf.~the discussion in the introduction. 

We will apply Theorem~\ref{thm:M:containers} (in Sections~\ref{GM:lemma:sec} and~\ref{sec:dense}) via the following corollary.

\begin{cor}\label{cor:M:containers}
Let $0 < \gamma < 1/4$, let $S_1, S_2 \subset \Z$ be intervals, and set 
\begin{equation}\label{def:XandY}
Y := S_1 \cup S_2 \qquad \text{and} \qquad X := (S_1 + S_1) \cup (S_2 + S_2).
\end{equation}
Then there is a family $\cB \subset 2^{X} \times 2^{Y}$ of size at most 
\begin{equation}\label{eq:number:of:containers}
\exp\Big( 2^{17} \gamma^{-2}\sqrt{|Y|} \, \big( \log |Y| \big)^{3/2} \Big)
\end{equation}
such that:
\begin{itemize}
\item[$(a)$] For every pair of sets $U \subset Y$ and $W \subset X \setminus (U + U)$, there exists $(C, D) \in \cB$ such that $W \subset C$ and $U \subset D$.
\item[$(b)$] For every $(C,D) \in \cB$, 
\begin{equation}\label{eq:size:of:D}
|D| \leq \max\bigg\{ (1 + 4\gamma) |Y| - \frac{|C|}{2}, \, \frac{3|Y|}{\log |Y|} \bigg\}.
\end{equation}
\end{itemize}
\end{cor}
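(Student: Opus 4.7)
The plan is to deduce Corollary~\ref{cor:M:containers} by applying Theorem~\ref{thm:M:containers} to $Y = S_1 \cup S_2$ with the parameter $m := 3|Y|$, and then translating its containers for sumsets into co-containers for the ``missing'' sums in $X$. The choice $m = 3|Y|$ is enough because $Y$ is a union of two intervals, so
$|J+J| \le |Y+Y| \le |S_1+S_1| + |S_1+S_2| + |S_2+S_2| \le 3|Y|$
for every $J \subset Y$. Theorem~\ref{thm:M:containers} then yields a family $\cA$ of size at most $\exp\bigl( 2^{16} \gamma^{-2} \sqrt{3|Y|}\, (\log|Y|)^{3/2} \bigr) \le \exp\bigl( 2^{17} \gamma^{-2} \sqrt{|Y|}\, (\log|Y|)^{3/2} \bigr)$, matching~\eqref{eq:number:of:containers} since $\sqrt{3} < 2$.

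I then set $\cB := \{\,(X \setminus (A \cap X),\, B)\,:\,(A, B) \in \cA\,\}$. Given any $U \subset Y$ and $W \subset X \setminus (U+U)$, Theorem~\ref{thm:M:containers} applied with $J := U$ provides $(A, B) \in \cA$ with $A \subset U+U$ and $U \subset B$. The pair $(C, D) := (X \setminus A,\, B) \in \cB$ then satisfies $W \subset X \setminus (U+U) \subset X \setminus A = C$ (because $A \subset U+U$) and $U \subset B = D$, establishing property~$(a)$.

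For property~$(b)$, Theorem~\ref{thm:M:containers}(ii) offers two cases: either $|B| \le m/\log|Y| = 3|Y|/\log|Y|$, matching the second branch of~\eqref{eq:size:of:D}, or the pair-count bound $|\{(b_1, b_2) \in B \times B : b_1 + b_2 \notin A\}| \le \gamma^2 |B|^2$ holds. In the latter case, I need $|D| + |C|/2 \le (1+4\gamma)|Y|$. Writing $B_i := B \cap S_i \subset S_i$, each $B_i$ lies in an interval, so $|B_i + B_i| \ge 2|B_i|-1$; together with the decomposition $X = (S_1+S_1) \cup (S_2+S_2)$, this gives $|(B+B) \cap X| \ge 2|B| - 2$ and $|X| \le 2|Y|$. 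The target inequality then reduces to showing that the ``missing-sum'' count $|(B+B) \cap X \setminus A|$ is $O(\gamma |Y|)$, after which the bound $|C| \le |X| - |(B+B) \cap X| + |(B+B) \cap X \setminus A|$ yields the claim by routine algebra.

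The main obstacle is this last estimate. The naive pair-count bound $|(B+B) \cap X \setminus A| \le \gamma^2|B|^2$ is too weak when $|B|$ is close to $|Y|$. I would exploit the interval structure of $S_1, S_2$ through the multiplicity function $r_{B_i}(s) := |B_i \cap (s - B_i)|$ on $B_i + B_i$: since $B_i$ lies inside an interval, $r_{B_i}(s) \le \min(s - 2\min B_i,\, 2\max B_i - s) + 1$, so only $2(t-1)$ sums in $B_i + B_i$ satisfy $r_{B_i}(s) < t$, while every bad sum of multiplicity at least $t$ consumes $t$ units of the pair budget. Choosing the threshold $t \asymp \gamma |B|$ and combining these two estimates gives at most $O(\gamma|B|)$ bad sums of high multiplicity and $O(\gamma|B|)$ of low multiplicity, hence $|(B+B) \cap X \setminus A| = O(\gamma|B|) = O(\gamma|Y|)$, which completes the proof.
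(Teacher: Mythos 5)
Your reduction to Theorem~\ref{thm:M:containers} with $m := 3|Y|$, the definition $\cB := \{(X \setminus A, B) : (A,B) \in \cA\}$, the count of containers, and the verification of property~$(a)$ all coincide with the paper's argument. The gap is in your proof of property~$(b)$, at the claim that ``only $2(t-1)$ sums in $B_i + B_i$ satisfy $r_{B_i}(s) < t$''. The inequality $r_{B_i}(s) \le \min(s - 2\min B_i,\, 2\max B_i - s) + 1$ is an \emph{upper} bound on the multiplicity, so it cannot bound the number of low-multiplicity sums; for that you need a \emph{lower} bound, and for a general subset $B_i$ of an interval $S_i$ the best available is $r_{B_i}(s) \ge |S_i \cap (s - S_i)| - 2|S_i \setminus B_i|$. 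Hence the number of sums of multiplicity below $t$ is only at most about $2t + 4(|S_i| - |B_i|)$ per interval, and the extra term of order $|Y| - |B|$ is genuinely there: for instance, if $B_1$ consists of the left half of $S_1$ together with a spread-out set in the right half, then $B_1 + B_1$ contains $\Theta(|S_1|)$ sums of bounded multiplicity, far more than $O(\gamma |B|)$. Feeding the corrected estimate into your final computation (with $|X| \le 2|Y| - 1$ and $|(B+B) \cap X| \ge 2|B| - 2$) and moving the $\Theta(|Y| - |B|)$ term across only yields a bound of the shape $|D| \le (1 + O(\gamma))|Y| - |C|/6$, so the coefficient $1/2$ of $|C|$ in~\eqref{eq:size:of:D} is lost; the argument as proposed does not recover the stated inequality.

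The paper closes this case by a different mechanism (Lemma~\ref{prop:trivsupersat}): rather than bounding the number of distinct missing sums, it counts \emph{pairs} with sum in $C$ directly. Writing $D = \{d_1 < \cdots < d_\ell\}$, the $2\ell - 1$ sums $d_1 + d_1 < \cdots < d_1 + d_\ell < d_2 + d_\ell < \cdots < d_\ell + d_\ell$ are distinct elements of $X$, so if $|D| \ge (1+4\gamma)|Y| - |C|/2$ then at least $8\gamma|Y|$ of them lie in $C$, each involving $d_1$ or $d_\ell$; removing $d_1, d_\ell$ and iterating $\gamma|Y|$ times accumulates $\gamma^2|Y|^2 \ge \gamma^2|D|^2$ pairs with sum in $C$, contradicting Theorem~\ref{thm:M:containers}$(ii)$ (with the disjoint case handled by running the argument on $D \cap S_1$ and $D \cap S_2$ separately). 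The iteration is the key point: it converts a linear count of extreme sums into the quadratic pair count that the container property controls, which a single-pass count of bad sums cannot do when $\gamma$ is small. If you want to repair your argument, this is the step to adopt.
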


Note that replacing $W \subset X \setminus (U + U)$ by $W = X \setminus (U + U)$ in part $(a)$ would give an equivalent statement; however, we will find this formulation convenient. To deduce Corollary~\ref{cor:M:containers} from Theorem~\ref{thm:M:containers}, we will need the following easy lemma, cf.~\cite[Corollary~3.3]{C19}.

\begin{lemma}\label{prop:trivsupersat}
Let $\gamma > 0$, let $S_1,S_2 \subset \Z$ be intervals, and set 
$$Y := S_1 \cup S_2 \qquad \text{and} \qquad X := (S_1 + S_1) \cup (S_2 + S_2).$$ 
Let $C \subset X$ and $D \subset Y$. If 
$$|D| \ge (1+4\gamma)|Y| - |C|/2$$
then there are at least $\gamma^2 |D|^2$ pairs $(b_1,b_2) \in D \times D$ such that $b_1+b_2 \in C$. 
\end{lemma}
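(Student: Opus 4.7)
Set $n := |Y|$, $d := |D|$ and $t := n - d$, so the hypothesis rearranges to $|C| \ge 2 t + 8\gamma n$. Write $D_i := D \cap S_i$, $n_i := |S_i|$, $d_i := |D_i|$, $t_i := n_i - d_i$ (so $t_1 + t_2 = t$) and $C_i := C \cap (S_i + S_i)$. The plan is to count only \emph{same-side} pairs $(b_1,b_2) \in D_i \times D_i$, which automatically satisfy $b_1 + b_2 \in S_i + S_i \subseteq X$; it therefore suffices to show
\[
\sum_{i=1}^{2} \sum_{c \in C_i} r_{D_i}(c) \ \ge\ \gamma^2 d^2,
\]
where $r_{D_i}(c)$ denotes the number of ordered pairs in $D_i \times D_i$ summing to $c$.

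The key representation estimate is
\[
r_{D_i}(c) \ \ge\ \max\bigl( 0,\ r_{S_i}(c) - 2 t_i \bigr),
\]
which holds because removing any single element from $S_i$ destroys at most two ordered pairs summing to a given $c$. Since $r_{S_i}(c) = \min(c - 2 a_i + 1,\ 2 b_i - c + 1)$ is a triangular function on the interval $S_i + S_i$ (with $S_i = [a_i, b_i]$), its sub-level set $\{c \in S_i + S_i : r_{S_i}(c) < 2 t_i\}$ is a union of two short intervals of total size at most $4 t_i$, and on its complement the values of $r_{S_i}(c) - 2 t_i$ take the sorted form $0, 0, 1, 1, 2, 2, \ldots$ by symmetry about the peak.

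Applying the rearrangement inequality to minimise the restriction of $\sum (r_{S_i}(c) - 2 t_i)$ to a subset of $C_i$ of size at least $|C_i| - 4 t_i$, one obtains a ``triangular tail'' of the form $\tfrac14(|C_i| - 4 t_i)^2$ in each interval. Summing over $i \in \{1, 2\}$ and invoking the convexity estimate $x^2 + y^2 \ge (x+y)^2/2$ together with the hypothesis $|C_1| + |C_2| \ge |C| \ge 2t + 8\gamma n$, one arrives at a lower bound of the form $\tfrac18(8\gamma n - 2 t)^2$; comparing this to $\gamma^2 d^2 = \gamma^2 (n - t)^2$ and simplifying produces the lemma. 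The coefficient ``$4$'' in $(1 + 4\gamma)$ is exactly what is needed for the quadratic tail sum to dominate the linear loss term $2 t_i |C_i|$ coming from the representation bound above.

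\emph{Main obstacle.} The calculation is tight: the adversary can concentrate $C$ near the boundary of $S_i + S_i$, where $r_{S_i}$ is small and the naive lower bound on $r_{D_i}$ collapses to zero. The coupling between the two intervals (via $|C_1| + |C_2| \ge |C|$ and $d_1^2 + d_2^2 \ge d^2/2$) is essential to close the argument, and it is precisely the quadratic growth of the triangular function $r_{S_i}$ away from its endpoints that produces the factor $\gamma^2$ (rather than $\gamma$) in the final bound.
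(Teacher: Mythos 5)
Your plan contains a genuine gap: it tacitly assumes that $t := |Y| - |D|$ is small compared with $\gamma |Y|$, but the hypothesis does not provide this. Rearranged, the hypothesis says $|C| \ge 2t + 8\gamma|Y|$, and since $|C|$ may be as large as $|X| \approx 2|Y|$, the set $D$ is allowed to miss up to about $(1-4\gamma)|Y|$ elements of $Y$. In that regime your key pointwise estimate $r_{D_i}(c) \ge r_{S_i}(c) - 2t_i$ is vacuous: take, for instance, $S_1 = S_2 = [1,n]$, $D$ the set of even numbers in $[1,n]$ (so $t = n/2$) and $C = X$; the hypothesis holds for small fixed $\gamma$, the conclusion is trivial (every one of the $|D|^2$ pairs has sum in $C$), yet $r_{S}(c) \le n = 2t$ for every $c$, so your lower bound is identically $0$. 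Even in the intermediate regime the numbers do not close: your final bound is $\tfrac18\big(|C| - 4t\big)^2 \ge \tfrac18\big(8\gamma|Y| - 2t\big)^2$, which has to beat $\gamma^2|D|^2 = \gamma^2(|Y| - t)^2$; already for $t$ around $3\gamma|Y|$ the left side is $\tfrac12\gamma^2|Y|^2$ while the right side is $\approx \gamma^2|Y|^2$, and for $t \ge 4\gamma|Y|$ the left side vanishes. So the assertion that ``comparing and simplifying produces the lemma'' is false as stated, and the constant $4$ in $(1+4\gamma)$ is not what makes it work. (Two smaller issues: when $S_1 \cap S_2 \neq \emptyset$ one has $t_1 + t_2 \ge t$ rather than equality, and summing the two same-side counts can double count pairs lying in $(S_1\cap S_2)^2$, so it is not in general a lower bound for the number of pairs.)

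The paper's proof avoids representation functions entirely and, in particular, is insensitive to the size of $t$. Writing $d_1 < \cdots < d_\ell$ for the elements of $D$ (or of each $D \cap S_i$ in the disjoint case), the $2\ell - 1$ sums $d_1+d_1 < d_1+d_2 < \cdots < d_1+d_\ell < d_2+d_\ell < \cdots < d_\ell+d_\ell$ are pairwise distinct elements of $X$, and the hypothesis gives $2\ell - 1 \ge |X| - |C| + 8\gamma|Y|$; hence at least $8\gamma|Y|$ of these sums lie in $C$, and each corresponding pair uses $\min(D)$ or $\max(D)$. Deleting the two extreme elements and iterating $\gamma|Y|$ times produces $\gamma^2|Y|^2 \ge \gamma^2|D|^2$ pairs. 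If you want to salvage your approach, you would need a count that exploits the smallness of $|X \setminus C|$ (which is what the hypothesis really gives when $t$ is large) rather than the smallness of $t$; the extreme-point argument above is one way of doing exactly that.
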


\begin{proof}
Suppose first that $S_1 \cap S_2$ is non-empty, so $X = Y + Y$, and let the elements of $D$ be $d_1 < \cdots < d_\ell$. Then $D + D \subset X$ contains the $2\ell - 1$ elements
$$d_1 + d_1 < d_1 + d_2 < \cdots < d_1 + d_\ell < d_2 + d_\ell < \cdots < d_\ell + d_\ell,$$
and $2\ell - 1 \ge (2 + 8\gamma)|Y| - |C| - 1 = |X| - |C| + 8\gamma |Y|$, since $|X| = 2|Y| - 1$. Since $C \subset X$, it follows that there are at least $8\gamma |Y|$ pairs $(b_1,b_2) \in D \times D$ such that $b_1+b_2 \in C$ and $\{b_1,b_2\} \cap \{d_1,d_\ell\}$ is non-empty. Removing $d_1$ and $d_\ell$ from $D$, and repeating the argument $\gamma |Y|$ times, we obtain $\gamma^2 |Y|^2$ pairs $(b_1,b_2) \in D \times D$ such that $b_1+b_2 \in C$.

When $S_1$ and $S_2$ are disjoint, we simply apply the argument above for the two sets $D_1 := D \cap S_1$ and $D_2 := D \cap S_2$. To spell out the details, for each $i \in \{1,2\}$ there are $2|D_i| - 1$ pairs $(b_1,b_2) \in D_i \times D_i$ with distinct sums such that either $b_1 = \min(D_i)$ or $b_2 = \max(D_i)$. Moreover, $D_1 + D_1$ and $D_2+D_2$ are disjoint subsets of $X$, and
$$2|D| - 2 \ge (2 + 8\gamma)|Y| - |C| - 2 = |X| - |C| + 8\gamma |Y|,$$
since $|X| = 2|Y| - 2$. As before, it follows that there are at least $8\gamma |Y|$ pairs $(b_1,b_2) \in D \times D$ such that $b_1+b_2 \in C$ and either $b_1 \in \{\min(D_1),\, \min(D_2) \}$ or $b_2 \in \{\max(D_1), \,\max(D_2) \}$. Removing the minimum and maximum elements of $D_1$ and $D_2$, and repeating the argument $\gamma |Y|$ times, we obtain $\gamma^2 |Y|^2$ pairs $(b_1,b_2) \in D \times D$ such that $b_1+b_2 \in C$, as claimed.
\end{proof}

\begin{proof}[Proof of Corollary~\ref{cor:M:containers}]
Applying Theorem~\ref{thm:M:containers} with $n := |Y|$ and $m := 3|Y|$, we obtain a family $\cA \subset 2^{Y+Y} \times 2^Y$, with 
$$|\cA|\leq \exp\Big( 2^{17}\gamma^{-2}\sqrt{|Y|} \big( \log |Y| \big)^{3/2} \Big),$$ 
satisfying properties~$(i)$ and~$(ii)$ of the theorem. We claim that 
$$\cB:=\big\{ (X \setminus A,B) : (A,B)\in \cA \big\} \subset 2^{X} \times 2^{Y}$$
satisfies properties~$(a)$ and~$(b)$ of Corollary~\ref{cor:M:containers}.

To show that property~$(a)$ holds, let $U \subset Y$ and $W \subset X \setminus (U + U)$, and set $J := U$. 
Noting that $J \subset Y$, 
and that 
$$|J+J| \le |Y+Y| \le 3|Y| = m,$$ 
it follows from Theorem~\ref{thm:M:containers}$(i)$ that there exists $(A,B) \in \cA$ with $A \subset J+J$ and $J \subset B$, and hence there exists $(C, D) = (X \setminus A,B) \in \cB$ such that $W \subset C$ and $U \subset D$.

For property~$(b)$, let $(C,D) \in \cB$, and observe that, by Theorem~\ref{thm:M:containers}$(ii)$, either $|D| \leq \frac{3|Y|}{\log |Y|}$, or there are at most $\gamma^2 |D|^2$ pairs $(b_1, b_2) \in D \times D$ such that $b_1 + b_2 \in C$. In the latter case, we have $|D| \le (1 + 4\gamma)|Y| - |C|/2$, by Lemma~\ref{prop:trivsupersat}. Since $|\cB| \le |\cA|$, the corollary follows. 
\end{proof}

\section{A probabilistic lemma}\label{GM:lemma:sec}

Green and Morris~\cite[Theorem~1.3]{GM} used their bounds on the number of sets with small sumset to prove that if $S$ is a random subset of $\N$, with each element included in $S$ independently with probability $1/2$, then 
$$\Pr\Big( \big| \N \setminus \big( S + S \big) \big| \ge m \Big) = 2^{-m/2 + o(m)}.$$ 
We will use Corollary~\ref{cor:M:containers} to prove the following generalisation of their theorem. We remark that a similar result (with a slightly larger error term) for larger values of $k$ 
can be deduced from exactly the same proof. 

\pagebreak

\begin{lemma}\label{lem:prob_sumset}
Let $n,k \in \N$, with $k \le 2n/3$, 
and set $p := k/n$. 
If $S$ is a uniformly-chosen random subset of $[n]$ of size $k$, then
\begin{equation}\label{eq:prob}
\Pr\Big( \big| \big\{ 2,\ldots,2n \big\} \setminus \big( S + S \big) \big| \ge m \Big) 
\le  \exp\big( 2^{16} m^{7/8} \big) \cdot \big( 1 - p \big)^{m/2}.
\end{equation}
\end{lemma}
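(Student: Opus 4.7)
The plan is to apply Corollary~\ref{cor:M:containers} with $S_1 = S_2 = [n]$ (so that $Y = [n]$ and $X = \{2,\ldots,2n\}$) to the pair $(S, M(S))$, where $M(S) := X \setminus (S+S)$. For a parameter $\gamma \in (0, 1/4)$ to be chosen later, the corollary supplies a family $\cB \subset 2^X \times 2^Y$ of size at most $\exp\bigl(2^{17}\gamma^{-2}\sqrt n (\log n)^{3/2}\bigr)$ such that every $S \subset [n]$ with $|M(S)| \ge m$ admits a container $(C, D) \in \cB$, obtained by invoking part~$(a)$ with $U = S$ and $W$ any $m$-subset of $M(S)$, satisfying $|C| \ge m$, $S \subset D$, and $|D| \le \max\{(1+4\gamma)n - |C|/2,\, 3n/\log n\}$ by part~$(b)$.

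The probabilistic ingredient is the elementary estimate
\[
\Pr(S \subset D) \;=\; \binom{|D|}{k}\bigg/\binom{n}{k} \;=\; \prod_{i=0}^{n-|D|-1} \frac{n-k-i}{n-i} \;\le\; (1-p)^{n-|D|},
\]
which follows since each factor is at most $(n-k)/n = 1-p$. In the ``main'' regime $|D| \le (1+4\gamma)n - |C|/2 \le (1+4\gamma)n - m/2$, this yields $\Pr(S \subset D) \le (1-p)^{m/2 - 4\gamma n}$. The ``degenerate'' regime $|D| \le 3n/\log n$ contributes negligibly: the probability vanishes for $k > 3n/\log n$, and otherwise $p \le 3/\log n$ forces $(3/\log n)^k$ to be far smaller than $(1-p)^{m/2}$ for every $m \le 2n-1$ (provided $n$ is sufficiently large).

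Summing over $\cB$ via a union bound then gives
\[
\Pr\bigl(|M(S)| \ge m\bigr) \;\le\; \exp\!\Bigl(2^{17}\gamma^{-2}\sqrt n (\log n)^{3/2} \,+\, 4\gamma n \log(1/(1-p))\Bigr)(1-p)^{m/2},
\]
and it remains to choose $\gamma$ so that the exponent inside $\exp(\cdot)$ is at most $2^{16}m^{7/8}$. Using the hypothesis $k \le 2n/3$ to bound $\log(1/(1-p)) \le \log 3$, balancing the two summands yields $\gamma \sim (\log n)^{1/2}/n^{1/6}$ and an exponent of order $n^{5/6}(\log n)^{1/2}$, which fits inside $2^{16}m^{7/8}$ whenever $m$ is at least a fixed power of $n$ (roughly $m \gtrsim n^{20/21}$). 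I anticipate this parameter tuning to be the main obstacle, with a short case split handling the small-$m$ regime, where the bound in~\eqref{eq:prob} is already trivially true (its right-hand side exceeds~$1$ whenever $m^{1/8}\log(1/(1-p)) \le 2^{17}$). Combining the container estimate with this trivial bound concludes the proof.
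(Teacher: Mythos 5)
There is a genuine gap, and it lies exactly where you anticipated trouble. Applying Corollary~\ref{cor:M:containers} with $S_1 = S_2 = [n]$ makes the container cost $\exp\bigl(2^{17}\gamma^{-2}\sqrt{n}(\log n)^{3/2}\bigr)$ depend on $n$ rather than on $m$; after balancing against the loss $(1-p)^{-4\gamma n}$ the best you can get is a bound of the shape $\exp\bigl(\Theta(n^{5/6}(\log n)^{1/2})\bigr)(1-p)^{m/2}$, which is at most $\exp(2^{16}m^{7/8})(1-p)^{m/2}$ only when $m$ is at least roughly $n^{20/21}$. Your proposed rescue for smaller $m$ does not close the hole: the right-hand side of~\eqref{eq:prob} exceeds $1$ only when $m^{1/8}\log(1/(1-p)) \le 2^{17}$, i.e.\ only for $m = O(p^{-8})$ up to absolute constants. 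So for, say, $p$ a constant and $m = \sqrt{n}$, neither your container estimate nor the trivial bound applies. This middle range is not a corner case but the heart of the matter: the lemma must be uniform in $n$, since it is invoked (via Corollary~\ref{cor:prob:sumset}, with $n$ replaced by $\lambda k/2$ and $p \approx 2/\lambda$) with $m$ as small as the constant $2^{400}\lambda^{24}$ while $\lambda k \to \infty$.

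The missing idea is a localization step before containers are used. The paper first reduces to $m \ge 2^{128}p^{-8}$ (otherwise the bound is trivial, as you noted), and then shows via Pittel's inequality (Lemma~\ref{lem:middle:covered}) that, with failure probability at most $\tfrac{8}{p^2}(1-p)^{m}$, the entire middle segment $\{2M+1,\ldots,2n-2M+1\}$ with $M := \lceil 4m/p\rceil$ is covered by $S+S$; on this event every missing element lies within $2M$ of an endpoint of $\{2,\ldots,2n\}$. Corollary~\ref{cor:M:containers} is then applied not to $[n]$ but to $Y := \{1,\ldots,M\}\cup\{n-M+1,\ldots,n\}$, of size $2M = O(m/p)$, so the container cost is $\exp\bigl(O(\gamma^{-2}\sqrt{m/p}\,(\log(m/p))^{3/2})\bigr)$; choosing $\gamma$ so that this equals $(1-p)^{-\gamma M}$ and using $m \ge 2^{128}p^{-8}$ gives $\gamma M < m$ and a final error factor $\exp\bigl(2^{14}m^{5/6}p^{-1/6}(\log m)^{1/2}\bigr) \le \exp(2^{16}m^{7/8})$, uniformly in $n$. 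Your probabilistic core is fine --- the identity and bound for $\Pr(S \subset D)$ and the union bound over containers are essentially the computation the paper performs with $S\cap Y \subset D$ and $|Y\setminus D| \ge m/2 - 8\gamma M$ --- but without the restriction to the two end-windows the argument cannot reach the stated, $n$-free, bound.
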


In the proof of Lemma~\ref{lem:prob_sumset} we will also use the following well-known inequality (see, e.g.,~\cite[Lemma 5.2]{ABMS1}). 

\begin{lemma}[Pittel's inequality]\label{lem:pittel}
Let $n,k \in \N$ with $k \le n$, and set $p := k/n$. If $\cI$ is a monotone decreasing property on $[n]$, then
$$\Pr\big( \cI\text{ holds for a random $k$-subset of }[n] \big) \le 2 \cdot \Pr\big( \cI\text{ holds for a $p$-random subset of }[n] \big).$$
\end{lemma}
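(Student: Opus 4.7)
The plan is to deduce Pittel's inequality by the standard device of conditioning the $p$-random subset on its size and then comparing to the $k$-subset probability; the only non-trivial ingredient is the classical fact that a binomial with integer mean has that mean as a median. Let $S_p$ denote the $p$-random subset of $[n]$ in which each element is included independently with probability $p = k/n$, and for each $j \in \{0,1,\dots,n\}$ write
$$f(j) \,:=\, \Pr\big( \cI \text{ holds for a uniformly random $j$-subset of } [n] \big).$$
The quantity Pittel's inequality is bounding is $f(k)$, and the quantity we are comparing it to is $\Pr(\cI \text{ holds for } S_p)$.

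First I would observe that, conditional on $|S_p| = j$, the set $S_p$ is uniformly distributed over the $j$-subsets of $[n]$, and hence
$$\Pr(\cI \text{ holds for } S_p) \,=\, \sum_{j=0}^{n} \Pr\big(|S_p| = j\big) \cdot f(j).$$
Next, I would verify that $f$ is non-increasing in $j$. This follows by the natural coupling: let $(i_1,\dots,i_n)$ be a uniformly random permutation of $[n]$ and set $T_j := \{i_1,\dots,i_j\}$, so that each $T_j$ is a uniform $j$-subset and $T_j \subset T_{j+1}$; because $\cI$ is monotone decreasing, $\cI(T_{j+1}) \Rightarrow \cI(T_j)$, giving $f(j) \ge f(j+1)$.

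Restricting the sum above to $j \le k$ and using $f(j) \ge f(k)$ for such $j$ yields
$$\Pr(\cI \text{ holds for } S_p) \,\ge\, \sum_{j=0}^{k} \Pr\big(|S_p| = j\big) \cdot f(k) \,=\, f(k) \cdot \Pr\big( |S_p| \le k \big).$$
To finish, I need $\Pr(|S_p| \le k) \ge 1/2$. Since $|S_p| \sim \Bin(n, k/n)$ has integer mean exactly $k$, this is the classical fact (Jogdeo and Samuels, 1968) that the median of $\Bin(n,p)$ always equals $\lfloor np \rfloor$ or $\lceil np \rceil$; when $np$ is an integer, that common value $np$ is a median, so $\Pr(|S_p| \le np) \ge 1/2$. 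Combined with the previous display this gives $f(k) \le 2 \Pr(\cI \text{ holds for } S_p)$, which is the statement of the lemma.

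The main obstacle, and indeed the only non-routine step, is justifying $\Pr(|S_p| \le k) \ge 1/2$: this is not obvious from elementary symmetry (the binomial is skewed unless $p = 1/2$) and relies on the median-vs-mean result cited above. A self-contained alternative would be to prove this directly by comparing the tails $\Pr(\Bin(n,p) \le k)$ and $\Pr(\Bin(n,p) \ge k)$ via the ratio of consecutive binomial probabilities, which peaks at the mode $\lfloor (n+1)p \rfloor = k$ when $np = k \in \N$; since in the excerpt the lemma is stated as a well-known fact (with a reference to~\cite{ABMS1}), the cleanest presentation is simply to invoke the classical result rather than reproduce this calculation.
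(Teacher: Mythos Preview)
Your proof is correct and follows essentially the same approach as the paper: both rely on the fact that $\Pr(\Bin(n,p) \le k) \ge 1/2$ when $np = k$ is an integer, combined with the monotonicity of the probability that $\cI$ holds for a uniform $j$-subset. The paper compresses your steps into a one-line argument (``recall that $\Bin(n,p) \le \lceil pn \rceil = k$ holds with probability at least $1/2$; since $\cI$ is monotone decreasing, the claimed bound follows''), but the underlying reasoning---conditioning on $|S_p|$ and using $f(j) \ge f(k)$ for $j \le k$---is identical to what you have spelled out.
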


\begin{proof}
Following the proof in~\cite{ABMS1}, recall that $\Bin(n,p) \le \lceil pn \rceil = k$ holds with probability at least $1/2$. Since $\cI$ is monotone decreasing, the claimed bound follows.
\end{proof}

We first prove a simple lemma that will also be useful in Section~\ref{sec:dense}.

\begin{lemma}\label{lem:middle:covered}
Let $n \in \N$ and $k \in [n]$, set $p := k/n$, and let $M \in \N$. If $S$ is a uniformly-chosen random subset of $[n]$ of size $k$, then
$$\Pr\Big( \big\{ M+1, \dotsc, 2n - M + 1 \big\} \not\subset S + S \Big) \le \frac{8}{p^2} \cdot \big(1 - p^2\big)^{M/2}.$$
\end{lemma}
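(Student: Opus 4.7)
The plan is to apply Pittel's inequality (Lemma~\ref{lem:pittel}) to pass to the $p$-random model, bound each $\Pr(s \notin S+S)$ using the independence that comes from disjoint pairs summing to $s$, and then sum over $s$ via a geometric series that produces the crucial $1/p^2$ factor. The event ``$\{M+1,\dotsc,2n-M+1\} \subset S+S$'' is monotone increasing in $S$, so its complement is monotone decreasing and Pittel costs only a factor of $2$; from this point on I would work in the $p$-random model.

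For the per-$s$ bound, fix $s$ and count the unordered pairs $\{a,s-a\}$ with $a < s-a$ and $a, s-a \in [n]$. A direct calculation gives $t(s) = \lfloor (s-1)/2 \rfloor$ when $s \le n+1$ and $t(s) = \lfloor (2n-s+1)/2 \rfloor$ when $s > n+1$, and in both regimes $t(s) \ge M/2$ throughout the range. Since these pairs are pairwise disjoint as subsets of $[n]$, the events ``both elements lie in $S$'' are mutually independent in the $p$-random model, so
\[
\Pr(s \notin S+S) \le (1-p^2)^{t(s)}.
\]

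The main step — and the only place one must beat the crude union bound by $2n$ — is to sum this over $s$ using the \emph{actual} values of $t(s)$ rather than the uniform lower bound $M/2$. Splitting at $s = n+1$ and grouping odd and even values, each half becomes a geometric series in $(1-p^2)$ with leading term $(1-p^2)^{\lceil M/2 \rceil}$; using the elementary bound $\sum_{\ell \ge M/2}(1-p^2)^{\ell} \le (1-p^2)^{M/2}/p^2$, each half contributes at most $2(1-p^2)^{M/2}/p^2$ (the factor of $2$ coming from adding the odd and even subseries). Adding the two halves and multiplying by the Pittel factor of~$2$ yields the advertised bound $(8/p^2)(1-p^2)^{M/2}$. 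I do not anticipate any substantive obstacle; the argument is essentially one geometric series once one resists the temptation to replace $t(s)$ by its minimum value before summing.
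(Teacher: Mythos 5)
Your plan is essentially the paper's proof: a union bound over the possibly missing sums, Pittel's inequality to pass to the $p$-random model, independence over the disjoint pairs $\{a,s-a\}$, and a geometric series to gain the factor $1/p^2$. The one structural difference --- applying Pittel once to the global event (which is indeed monotone decreasing) instead of separately to each event $\{s \notin S+S\}$ as the paper does --- is perfectly fine and costs the same factor of $2$.

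There is, however, a parity slip that makes one step false as written. For even $s$ the number of disjoint proper pairs is $t(s) = (s-2)/2$, so at the endpoint $s = M+1$ with $M$ odd you have $t(s) = (M-1)/2 < M/2$; thus the claims that $t(s)\ge M/2$ throughout the range and that each half is a geometric series with leading term $(1-p^2)^{\lceil M/2\rceil}$ both fail, and your computation as written only yields $(8/p^2)(1-p^2)^{\lfloor M/2\rfloor}$. The loss cannot be hidden in the constant, since $(1-p^2)^{\lfloor M/2\rfloor} + (1-p^2)^{\lceil M/2\rceil} \ge 2(1-p^2)^{M/2}$ by AM--GM. The repair is immediate, and is exactly what the paper does: for even $s$ the event $s \notin S+S$ also forces $s/2 \notin S$, contributing an extra factor $1-p \le (1-p^2)^{1/2}$, so that $\Pr(s \notin S+S) \le (1-p^2)^{(s-1)/2}$ for every $s$ in the half-range (this is the paper's intersection over $i \le \lfloor s/2 \rfloor$, which includes the diagonal term). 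With this per-$s$ bound every exponent is at least $M/2$ and your parity-grouped summation gives the stated $\tfrac{8}{p^2}(1-p^2)^{M/2}$; the paper instead sums a single geometric series in the ratio $(1-p^2)^{1/2}$, using $1-\sqrt{1-p^2} \ge p^2/2$.
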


\begin{proof}
Observe that the left-hand side is at most
$$\sum_{x = M + 1}^{2n - M + 1} \Pr\big( x \notin S + S \big) \le 2 \sum_{x = M + 1}^{n + 1} \Pr\big( x \notin S + S \big),$$
since, by symmetry, $\Pr\big( x \notin S + S \big) = \Pr\big( 2n +2 - x \notin S + S \big)$. Now, for $x \leq n + 1$, we can use Pittel's inequality to bound
$$\Pr\big( x \notin S + S \big) = \Pr\bigg( \bigcap_{i = 1}^{\lfloor x/2 \rfloor} \Big( \big\{ i \notin S \big\} \cup \big\{ x - i \notin S \big\} \Big) \bigg) \le 2 \big( 1- p^2 \big)^{(x-1)/2}.$$
It follows that
$$\Pr\Big( \big\{ M+1, \dotsc, 2n - M + 1 \big\} \not\subset S + S \Big) \le 4 \sum_{x = M + 1}^{\infty} \big( 1- p^2 \big)^{(x-1)/2} \le \frac{8}{p^2} \big(1 - p^2\big)^{M/2},$$
as claimed. 
\end{proof}

We are now ready to deduce Lemma~\ref{lem:prob_sumset} from Corollary~\ref{cor:M:containers}.

\begin{proof}[Proof of Lemma~\ref{lem:prob_sumset}] 
Observe first that, since $1 - p \ge e^{-2p}$ for $0 \le p \le 2/3$, the claimed bound holds trivially if $pm \le 2^{16} m^{7/8}$. We may therefore assume that $m \ge 2^{128} p^{-8}$. 

We will use Lemma~\ref{lem:middle:covered} to deal with the case that the `middle' is not covered by $S+S$. To be precise, set $M := \lceil 4m/p \rceil$ and let us write $\cE$ for the event that $\big\{ 2M+1, \dotsc, 2n - 2M + 1 \big\} \subset S + S$. Note that if $\cE$ holds, then 
$$\{2, \dotsc, 2n\} \setminus (S + S) \subset X := \big\{ 2, \dotsc, 2M \big\} \cup \big\{ 2n - 2M + 2, \dotsc, 2n \big\}.$$ 

\noindent Setting $W := X \setminus ( S + S )$, it follows that
$$\Pr\Big( \big| \big\{ 2,\ldots,2n \big\} \setminus \big( S + S \big) \big| \ge m \Big) \leq \Pr\big( |W| \geq m \big) + \Pr(\cE^c).$$
By Lemma~\ref{lem:middle:covered}, we have\footnote{Note that if $m \ge pn/4$, then $M \ge n$, and so the event $\cE$ holds trivially.} 
$$\Pr( \cE^c ) \le \frac{8}{p^2} \big(1 - p^2\big)^{M} \le \frac{8}{p^2} \big( 1 - p \big)^{m},$$
where the second inequality follows since $1 - x^2 \le (1-x)^{x/2}$ for all $0 \le x \le 1$.


To complete the proof, we will use Corollary~\ref{cor:M:containers} to bound the probability that $|W| \ge m$. Indeed, applying the corollary to the set
$$Y := \big\{ 1, \dotsc, M \big\} \cup \big\{ n - M + 1, \dotsc, n \big\},$$
and noting that the set $X$ defined above is the same as that defined in~\eqref{def:XandY}, we obtain a family $\cB \subset  2^{X} \times 2^{Y}$ of containers of size at most
\begin{equation}\label{eq:bound:on:B}
\exp\Big( 2^{18} \gamma^{-2}\sqrt{M} (\log M)^{3/2} \Big) = \big( 1 - p \big)^{-\gamma M},
\end{equation}
where $\gamma > 0$ is chosen so that the equality holds. In particular, if $(C,D) \in \cB$, then
\begin{equation}\label{eq:size:of:D:repeat}
|D| \leq \max\bigg\{ (1 + 4\gamma) |Y| - \frac{|C|}{2}, \, \frac{3|Y|}{\log |Y|} \bigg\},
\end{equation}
and if $U \subset Y$ and $W \subset X \setminus (U + U)$, then there exists $(C, D) \in \cB$ with $W \subset C$ and $U \subset D$. 

To apply Corollary~\ref{cor:M:containers}, we need to check that $\gamma < 1/4$. Using the bounds $1-p \leq e^{-p}$ and $M \ge m/p$, and noting that the function $x \mapsto (\log x)^{3/2} / \sqrt{x}$ is decreasing for $x > 2^{5}$, it follows from~\eqref{eq:bound:on:B} that
$$\gamma^3 \le \frac{2^{18} (\log M)^{3/2}}{p\sqrt{M}} \leq \frac{2^{18}}{\sqrt{pm}} \left(\log \frac{m}{p}\right)^{3/2}.$$
Therefore, since $M \le 8m/p$, we have
\begin{equation}\label{eq:epsM:vs:m}
\gamma M \le \frac{8\gamma m}{p} \leq \frac{2^9 m^{5/6}}{p^{7/6}}\left(\log \frac{m}{p}\right)^{1/2} < m,
\end{equation}
where the final inequality follows from the assumption that $m \ge 2^{128} p^{-8}$. Since $M \ge 4m$, it follows from~\eqref{eq:epsM:vs:m} that $\gamma < 1/4$, and so this is a valid choice of $\gamma$ in Corollary~\ref{cor:M:containers}.

We next claim that
\begin{equation}\label{eq:sum:over:containers}
\Pr\big( |W| \ge m \big) \le \sum_{(C,D) \in \cB} \Pr \Big( \big( W \subset C \big) \cap \big( S \cap Y \subset D \big) \Big).
\end{equation}
To see this, observe first that 
$$W = X \setminus ( S + S ) \subset X \setminus \big( (S \cap Y) +(S \cap Y) \big)$$
since $S \cap Y \subset S$. By the property of $\cB$ guaranteed by Corollary~\ref{cor:M:containers}$(a)$, applied with $U :=  S \cap Y$, 
it follows that there exists a pair $(C, D) \in \cB$ with $W \subset C$ and $S \cap Y \subset D$.


To bound the right-hand side of~\eqref{eq:sum:over:containers}, observe first that
\begin{equation}\label{eq:SYD}
\Pr \big( S \cap Y \subset D \big) \le \binom{n - |Y \setminus D|}{pn} \binom{n}{pn}^{-1}
\end{equation}
for every $(C,D) \in \cB$, since $S$ is a uniformly-chosen set of size $k = pn$, and if $S \cap Y \subset D$ then $S \cap (Y \setminus D) = \emptyset$. Moreover, by~\eqref{eq:size:of:D:repeat}, 
if $|W| \ge m$ then
\begin{equation}\label{eq:Dbound}
|Y \setminus D| \ge |Y| - |D| \ge \frac{m}{2} - 8\gamma M
\end{equation}
for every $(C,D) \in \cB$ with $W \subset C$. It follows from~\eqref{eq:bound:on:B},~\eqref{eq:sum:over:containers},~\eqref{eq:SYD} and~\eqref{eq:Dbound} that
\begin{equation}\label{eq:Wbound:penultimate}
\Pr\big( |W| \ge m \big) \, \le \, \big( 1 - p \big)^{-\gamma M} \binom{n - m/2 + 8 \gamma M}{pn} \binom{n}{pn}^{-1} \le \, \big( 1 - p \big)^{m/2 - 9\gamma M},
\end{equation}
where the second inequality follows from the standard binomial inequality
\begin{equation}\label{fact:binomial_classic}
\binom{a-c}{b} \le \bigg( \frac{a - b}{a} \bigg)^c \binom{a}{b}.
\end{equation}
Combining~\eqref{eq:epsM:vs:m} and~\eqref{eq:Wbound:penultimate}, and noting that $1 - p \ge e^{-2p}$ for $0 \le p \le 2/3$, it follows that
$$\Pr\big( |W| \ge m \big) \, \le \, \exp\Big( 2^{14} m^{5/6} p^{-1/6}(\log m)^{1/2} \Big) \cdot \big( 1 - p \big)^{m/2}.$$
Since $p^{-1/6}( \log m )^{1/2} \le m^{1/24}$, by our lower bound on $m$, the claimed bound follows.
\end{proof}

We will usually apply Lemma~\ref{lem:prob_sumset} in the following form. Recall that $\delta = 2^{-32} \lambda^{-3}$.

\begin{cor}\label{cor:prob:sumset}
Let $\lambda \ge 3$ and $k,m,b \in \N$, with $m \ge 2^{400} \lambda^{24}$ and $b \le \delta k$. There are at most 
$$e^{2 \delta m} \left(\frac{\lambda - 2}{\lambda}\right)^{m/2} {\lambda k / 2 \choose k - b}$$
sets $A' \subset [\lambda k / 2]$ of size $k - b$ such that $\big| [\lambda k] \setminus ( A' + A' ) \big| \ge m$. 
\end{cor}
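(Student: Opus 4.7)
\medskip

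\noindent \textbf{Proof plan for Corollary~\ref{cor:prob:sumset}.} The plan is to reduce the counting statement to a probabilistic statement by conditioning on $A'$ being a uniform $(k-b)$-subset of $[\lambda k / 2]$, and then to apply Lemma~\ref{lem:prob_sumset} with $n := \lambda k/2$ and density $p := (k-b)/n = 2(k-b)/(\lambda k)$. The first step is to verify the hypothesis $p \le 2/3$ of Lemma~\ref{lem:prob_sumset}: since $\lambda \ge 3$, we have $k - b \le k \le (2/3) \cdot (\lambda k / 2) = 2n/3$, so this is immediate. Note also that since $A' \subset [\lambda k / 2]$, the sum $A' + A'$ is contained in $\{2, \dotsc, \lambda k\} = \{2, \ldots, 2n\}$, so $1 \notin A' + A'$ automatically and thus the event $|[\lambda k] \setminus (A' + A')| \ge m$ implies $|\{2, \dotsc, 2n\} \setminus (A' + A')| \ge m - 1$.

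The second step is to apply Lemma~\ref{lem:prob_sumset} with $m - 1$ in place of $m$, giving
$$\Pr\big( |[\lambda k] \setminus (A'+A')| \ge m \big) \le \exp\big( 2^{16} m^{7/8} \big)\cdot (1 - p)^{(m-1)/2}.$$
Next I would rewrite $1 - p$ in a form that matches the target. Indeed,
$$1 - p \,=\, \frac{\lambda - 2}{\lambda} + \frac{2b}{\lambda k} \,\le\, \frac{\lambda - 2}{\lambda}\bigg( 1 + \frac{2b}{(\lambda - 2)k} \bigg) \,\le\, \frac{\lambda - 2}{\lambda} \cdot e^{2\delta},$$
where the final inequality uses $b \le \delta k$, $\lambda \ge 3$, and $1 + x \le e^x$. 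Raising to the $m/2$ power gives $(1 - p)^{m/2} \le \big( (\lambda-2)/\lambda \big)^{m/2} e^{\delta m}$. The small shift from $(m-1)/2$ to $m/2$ contributes at most a $(1-p)^{-1/2} \le \sqrt{3}$ factor, since $1 - p \ge (\lambda-2)/\lambda \ge 1/3$.

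The third step is to absorb the error term $\exp(2^{16}m^{7/8})$ into an extra $e^{\delta m}$ factor; this is where the hypothesis $m \ge 2^{400} \lambda^{24}$ is used. Since $\delta m = 2^{-32}\lambda^{-3} m$, the requirement $2^{16} m^{7/8} \le \delta m$ becomes $m^{1/8} \ge 2^{48}\lambda^3$, i.e., $m \ge 2^{384}\lambda^{24}$, which holds comfortably; the $\sqrt{3}$ constant is absorbed in the same manner. Combining, we obtain
$$\Pr\big( |[\lambda k]\setminus (A'+A')| \ge m \big) \le e^{2\delta m}\left(\frac{\lambda - 2}{\lambda}\right)^{m/2}.$$
Finally, multiplying both sides by the total number $\binom{\lambda k / 2}{k - b}$ of $(k-b)$-subsets of $[\lambda k / 2]$ converts this probabilistic bound into the desired counting bound.

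The main obstacle is bookkeeping: tracking the various constants through the reduction and verifying that the $m \ge 2^{400}\lambda^{24}$ threshold is large enough to swallow both the sub-exponential factor from Lemma~\ref{lem:prob_sumset} and the small losses incurred by bounding $1-p$ and shifting the exponent. No further combinatorial ingredient beyond Lemma~\ref{lem:prob_sumset} is needed.
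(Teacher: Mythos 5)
Your proposal is correct and follows essentially the same route as the paper: apply Lemma~\ref{lem:prob_sumset} with $p = 2(k-b)/\lambda k \le 2/3$, bound $1-p \le \frac{\lambda-2+2\delta}{\lambda}$ using $b \le \delta k$, and absorb the factor $\exp(2^{16}m^{7/8})$ into $e^{\delta m}$ via $m \ge 2^{400}\lambda^{24}$. Your extra care about the element $1$ (using $m-1$ and absorbing the resulting $\sqrt{3}$) is a harmless refinement of a detail the paper passes over silently.
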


\begin{proof}
We simply apply Lemma~\ref{lem:prob_sumset} with $p = 2(k-b) / \lambda k \le 2/3$, and observe that
$$\exp\big( 2^{16} m^{7/8} \big) \big( 1 - p \big)^{m/2} \le e^{2 \delta m}\left(\frac{\lambda - 2}{\lambda}\right)^{m/2},$$
by our bounds on $b$ and $m$. To spell out the details, note that 
$$2^{16} m^{7/8} \le \delta m,$$
since $\delta = 2^{-32} \lambda^{-3}$ and $m \ge 2^{400} \lambda^{24}$. 
Now, observe that 
$$\big( 1 - p \big)^{m/2} \le \bigg( \frac{\lambda - 2 + 2\delta}{\lambda}  \bigg)^{m/2} \le \exp\bigg( \frac
{\delta m}{\lambda - 2} \bigg) \bigg( \frac{\lambda  - 2}{\lambda} \bigg)^{m/2}.$$
Since $\lambda \ge 3$, the claimed bound follows.
\end{proof}

Since we will often only need a weaker bound, let us note here, for convenience, that 
\begin{equation}\label{eq:prob:sumset:weak}
e^{2 \delta m} \left(\frac{\lambda - 2}{\lambda}\right)^{m/2} \le \left(\frac{\lambda - 1}{\lambda}\right)^{m/2},
\end{equation}
since $\delta = 2^{-32} \lambda^{-3}$. 

\subsection{Tools and inequalities}

To finish this section, let us state some standard tools that we will use in the proof of Theorem~\ref{thm:structure}. The first is known as Ruzsa's covering lemma (see, e.g.,~\cite[Lemma~2.14]{TV}), and was first proved in~\cite{R99}. For completeness, we give the proof. 

\begin{lemma}[Ruzsa's covering lemma]\label{lem:RCL}
Let $A,B \subset \Z$ be non-empty sets of integers, and suppose that $|A+B| \le \mu |A|$. Then there exists a set $X \subset B$ with $|X| \le \mu$ such that
$$B \subset A - A + X.$$
\end{lemma}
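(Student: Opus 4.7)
The plan is to prove Ruzsa's covering lemma by a standard greedy maximality argument. I would choose $X \subset B$ to be a maximal subset such that the translates $\{A + x : x \in X\}$ are pairwise disjoint, and then verify both the size bound and the covering property.

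First, I would bound $|X|$. Since the translates $A + x$ for $x \in X$ are pairwise disjoint, and each is contained in $A + B$, we have
\[
|A| \cdot |X| \;=\; \Bigl| \bigsqcup_{x \in X} (A + x) \Bigr| \;=\; |A + X| \;\le\; |A + B| \;\le\; \mu |A|.
\]
Since $A$ is non-empty, dividing yields $|X| \le \mu$.

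Next I would verify the covering property $B \subset A - A + X$. Let $b \in B$. If $b \in X$, then $b \in A - A + X$ trivially (choosing $a - a = 0$ from $A - A$, which is non-empty since $A$ is). Otherwise, by the maximality of $X$, adjoining $b$ to $X$ would violate the disjointness condition, so there exists $x \in X$ with $(A + b) \cap (A + x) \ne \emptyset$. This gives $a_1, a_2 \in A$ with $a_1 + b = a_2 + x$, hence $b = a_2 - a_1 + x \in A - A + X$, as required.

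There is no real obstacle here: both steps are essentially one line each once one fixes the right definition of $X$. The only subtle point to keep in mind is ensuring $X$ is non-empty in a degenerate edge case, but taking any singleton $\{b_0\} \subset B$ is a valid starting point for the greedy procedure, so maximality always produces a non-empty $X$ when $B$ is non-empty (and the argument above in fact works even without this, since every $b \in B$ is handled on its own).
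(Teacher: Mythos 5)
Your proof is correct and is essentially identical to the paper's: the same maximal-disjoint-translates choice of $X$, the same bound $|A||X| \le |A+B| \le \mu|A|$, and the same use of maximality to cover each $b \in B \setminus X$. The extra remarks about non-emptiness and the $b \in X$ case are harmless refinements of the same argument.
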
 

\begin{proof}
Let $X \subset B$ be maximal such that the sets $A + x$ for $x \in X$ are disjoint. Observe that $|A+B| \ge |A| |X|$, and therefore $|X| \le \mu$. Now, since $X$ is maximal, $A + b$ intersects $A + X$ for every $b \in B \setminus X$, and hence $B \subset A - A + X$, as claimed. 
\end{proof}

We will also use the following special case of the Pl\"unnecke--Ruzsa inequalities~\cite{P69,P70,R89},  which is also an immediate consequence of Ruzsa's triangle inequality~\cite{R78}.

\begin{lemma}[Pl\"unnecke--Ruzsa inequality]\label{lem:PR}
If $|A+A| \le \lambda |A|$, then $|A - A| \le \lambda^2 |A|$. 
\end{lemma}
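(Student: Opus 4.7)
The plan is to deduce the inequality from \emph{Ruzsa's triangle inequality}, which states that for any finite subsets $X, Y, Z$ of an abelian group one has
$$|Y| \cdot |X - Z| \le |X - Y| \cdot |Y - Z|.$$
Once this is in hand, the lemma falls out after a suitable substitution.

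First I would prove the triangle inequality by exhibiting an injection from $Y \times (X - Z)$ into $(X - Y) \times (Y - Z)$. For each $d \in X - Z$, fix once and for all a representation $d = x_d - z_d$ with $x_d \in X$ and $z_d \in Z$. Then the map $(y, d) \mapsto (x_d - y,\, y - z_d)$ is injective: the sum of the two coordinates recovers $x_d - z_d = d$, which then determines $x_d$ and $z_d$ via the fixed representation, and finally $y$ is recovered from $x_d - y$. Comparing cardinalities gives the stated bound.

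Next I would apply this with $X = A$, $Y = -A$, and $Z = A$. Since $|-A| = |A|$, $|X - Y| = |A + A|$, $|Y - Z| = |{-A} - A| = |A + A|$, and $|X - Z| = |A - A|$, the inequality becomes
$$|A| \cdot |A - A| \,\le\, |A + A|^2 \,\le\, \lambda^2 |A|^2,$$
and dividing by $|A|$ yields $|A - A| \le \lambda^2 |A|$, as required.

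There is no real obstacle to this proof; the argument is a direct and standard application of Ruzsa's triangle inequality. The only point requiring any thought is picking the substitution $X, Y, Z$ so that both differences on the right of the triangle inequality become the sumset $A + A$, allowing the hypothesis $|A + A| \le \lambda |A|$ to be used twice.
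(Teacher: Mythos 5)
Your proof is correct and is essentially the paper's argument: the paper directly constructs an injection $(A-A)\times A \to (A+A)^2$ by fixing a representation of each difference and translating by an element of $A$, which is exactly the specialization of your triangle-inequality injection to $X=Z=A$, $Y=-A$. Proving the general Ruzsa triangle inequality first and then substituting is just a repackaging of the same idea (the paper itself remarks that the lemma is an immediate consequence of Ruzsa's triangle inequality).
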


\begin{proof}
To prove that $|A - A| \cdot |A| \le |A+A|^2$, it suffices to construct an injective map $\varphi \colon (A - A) \times A \to (A+A)^2$. To do so, choose an arbitrary function $f \colon A - A \to A^2$ such that if $f(x) = (a,b)$ then $a - b = x$, and define $\varphi(x,c) \mapsto (a+c,b+c)$, where $(a,b) = f(x)$. To see that $\varphi$ is injective, observe that $x = (a+c) - (b+c)$ and that $(a,b) = f(x)$. 
\end{proof}

In Section~\ref{sec:somewhat:dense} we will use a simple special case of the following result of Fre\u{\i}man~\cite{F59}.

\begin{lemma}[Fre\u{\i}man's $3k-4$ theorem]\label{lem:F3k4}
If $|A + A| \le 3|A| - 4$, then $A \subset P$ for some arithmetic progression $P$ of size $|A+A| - |A| + 1$.
\end{lemma}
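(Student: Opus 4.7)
The plan is to normalize $A$ by an affine transformation. Both the hypothesis $|A+A| \le 3|A|-4$ and the conclusion are preserved under maps $x \mapsto \alpha x + \beta$ with $\alpha \neq 0$, so we may translate so that $\min A = 0$ and then divide by $\gcd(A)$ to assume $\gcd(A) = 1$. Writing $A = \{0 = a_1 < a_2 < \cdots < a_k = N\}$, the statement reduces to the claim $N \leq |A+A| - k$, since then $A \subset \{0, 1, \ldots, N\}$ is an arithmetic progression of length $N + 1 \le |A+A| - k + 1$ (with common difference $1$).

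The lower bound for $|A+A|$ starts by exhibiting $2k - 1$ obvious sums: the set $A = 0 + A$ lies in $[0,N]$ and the translate $N + A$ lies in $[N, 2N]$, with $A \cap (N + A) = \{N\}$. Thus $|(A \cup (N+A))| = 2k-1$. The goal becomes to produce an additional $N - k + 1$ sums in $A+A$, which would give $|A + A| \ge N + k$ and thus $N \le |A+A| - k$ directly. The missing sums ought to come from "interior" combinations $a_i + a_j$ with $2 \le i \le j \le k-1$, and one needs to leverage the hypothesis $\gcd(A) = 1$, which prevents $A + A$ from lying in a sparse sublattice.

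I would proceed by induction on $k$. For the base case $k = 3$ the conclusion is a direct verification. For the inductive step, compare $A$ to $A^- := A \setminus \{a_k\}$ (or symmetrically $A \setminus \{a_1\}$). The deleted sums $\{a_k + a_j : 1 \le j \le k\}$ contribute $k$ elements on the top end of $A + A$, of which at most finitely many can coincide with sums in $A^- + A^-$ (which sits in $[0, 2a_{k-1}]$). Either $|A^- + A^-| \le 3(k-1) - 4$, in which case induction gives $A^- \subset P^-$ for an AP $P^-$ of length $|A^- + A^-| - (k-1) + 1$, and one then argues that $a_k$ also lies in the unique AP of length $|A+A|-k+1$ extending $P^-$; or else $|A^-+A^-| > 3(k-1) - 4$, meaning the removal of $a_k$ destroyed many sums, and a direct pigeonhole forces $A$ to be very close to a genuine arithmetic progression. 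After handling the same dichotomy on the bottom end (with $a_1$), a case analysis pins down the AP $P$ exactly.

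The main obstacle is the bookkeeping around $\gcd(A^-)$: removing a single element may raise the $\gcd$, so when passing to the inductive hypothesis, one must rescale $A^-$ and translate the resulting AP back to cover $A$. The critical point is to show that if $\gcd(A^-) = d > 1$, then $a_k \equiv 0 \pmod d$ as well—otherwise the sums $a_k + a_j$ for $j < k$ would all lie outside the progression $d \cdot \mathbb{Z}$ containing $A^- + A^-$, contributing $k - 1$ extra elements to $A+A$ and violating $|A+A| \le 3k - 4$ (since the induction bound on $|A^- + A^-|$ already uses almost the full budget). With this compatibility in hand, the induction closes and the normalizations reverse cleanly to yield the target progression $P$.
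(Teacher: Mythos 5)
The paper does not actually prove this lemma: it is quoted as a classical theorem of Fre\u{\i}man~\cite{F59}, so there is no internal proof to compare against, and your attempt has to stand on its own. Its skeleton is fine --- after an affine normalization you may assume $A=\{0=a_1<a_2<\dots<a_k=N\}$ with $\gcd(A)=1$, and the statement is indeed equivalent to $|A+A|\ge N+k$ --- but the inductive step, which is the entire content of the theorem, is asserted rather than proved. In the branch $|A^-+A^-|\le 3(k-1)-4$, the inductive hypothesis (together with $\gcd(A^-)=1$, so that $P^-$ has common difference $1$) gives $a_{k-1}\le |A^-+A^-|-(k-1)$; to upgrade this to $a_k\le |A+A|-k$ you need at least $a_k-a_{k-1}+1$ sums of the form $a_k+a_j$ lying outside $A^-+A^-$, i.e.\ you must bound the top gap $a_k-a_{k-1}$ and count the elements $a_j>2a_{k-1}-a_k$, using the hypothesis $|A+A|\le 3k-4$ in an essential way. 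None of this appears: ``one then argues that $a_k$ also lies in the unique AP extending $P^-$'' is precisely the step that has to be proved. The other branch (``the removal of $a_k$ destroyed many sums, and a direct pigeonhole forces $A$ to be very close to a genuine arithmetic progression'') is not an argument: no pigeonhole is specified, and ``very close to an AP'' is in any case not the conclusion required. Carrying out these two branches is essentially reproving the $3k-4$ theorem (say along Fre\u{\i}man's original lines, or via the Lev--Smeliansky approach), so as it stands the proposal is a plan, not a proof.

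The $\gcd$ discussion also needs repair, though this part is fixable. If $d:=\gcd(A^-)>1$ then $d\nmid a_k$ (since $\gcd(d,a_k)=\gcd(A)=1$), so your intended conclusion ``$a_k\equiv 0\pmod d$'' cannot hold; what you actually want to show is that the case $d>1$ is impossible under the hypothesis. Moreover, the count you give does not reach a contradiction: the $k-1$ sums $a_k+a_j$ with $j<k$ avoid $A^-+A^-\subset d\cdot\Z$, and with the trivial bound $|A^-+A^-|\ge 2(k-1)-1$ this yields only $|A+A|\ge 3k-4$, which is consistent with the hypothesis. You must also note that $a_k+a_k>2a_{k-1}\ge\max(A^-+A^-)$, giving one further new sum and hence $|A+A|\ge 3k-3>3k-4$, which is what rules out $d>1$ and guarantees $\gcd(A^-)=1$ for the inductive step.
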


We will also make frequent use of the following standard inequality in the calculations below:
\begin{equation}\label{eq:binomial:classic:too}
\binom{a-c}{b-d} \le \left( \frac{a - c}{a}\right)^{b - d} \left( \frac{b}{a-b}\right)^d\binom{a}{b}.
\end{equation}
In particular, note that
 \begin{equation}\label{eq:binomial:usual:application}
{\lambda k / 2 \choose k - b} \le \left( \frac{2}{\lambda - 2}\right)^b {\lambda k / 2 \choose k}.
\end{equation}

\smallskip
\noindent We will also use the following inequality once, in Section~\ref{sec:somewhat:dense}.

\begin{obs}\label{obs:binomial}
$$\binom{ca}{a} \leq \bigg( \frac{c^c}{(c-1)^{c-1}} \bigg)^a,$$
for every $a \in \N$ and $1 < c \in \R$. 
\end{obs}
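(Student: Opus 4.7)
The plan is to derive Observation~\ref{obs:binomial} as a direct consequence of the standard probabilistic argument behind the entropy bound on binomial coefficients; no induction or combinatorial cleverness is required.

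First, assuming $ca$ is a positive integer, I would set $p := 1/c \in (0,1)$ and consider a sum $X$ of $ca$ independent Bernoulli($p$) random variables. The probability that $X = a$ is exactly
\[
\binom{ca}{a} p^a (1-p)^{(c-1)a},
\]
and since this is at most $1$, solving for $\binom{ca}{a}$ yields
\[
\binom{ca}{a} \le p^{-a}(1-p)^{-(c-1)a} = c^a \cdot \bigg(\frac{c}{c-1}\bigg)^{(c-1)a} = \bigg( \frac{c^c}{(c-1)^{c-1}} \bigg)^a,
\]
which is precisely the claimed bound. Equivalently, one may cite the textbook inequality $\binom{n}{k} \le n^n / \big( k^k (n-k)^{n-k} \big)$ applied with $n = ca$ and $k = a$.

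If $ca$ is not an integer, the intended interpretation is $\binom{\lfloor ca \rfloor}{a}$, and I would apply the previous estimate with $c$ replaced by $c' := \lfloor ca \rfloor / a$, which satisfies $1 < c' \le c$ as soon as $\lfloor ca \rfloor \ge a+1$. Since the function $c \mapsto c^c / (c-1)^{c-1}$ is increasing on $(1,\infty)$ — the derivative of its logarithm is $\log\big(c/(c-1)\big) > 0$ — this substitution only strengthens the upper bound. In the intended application (in Section~\ref{sec:somewhat:dense}), $c$ will in any case be chosen so that $ca$ is an integer, so this subtlety does not actually arise. I foresee no serious obstacle: the entire argument fits in two or three lines, and the only thing to verify is that one is allowed to convert an integer probability statement directly into the desired closed-form inequality, which is immediate.
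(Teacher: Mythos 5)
Your proof is correct and is essentially the paper's argument in probabilistic dress: both bound $\binom{ca}{a}$ by comparing a single term of a binomial expansion to the whole sum, and the paper's substitution $y=(c-1)^{1/c}$ in $(y+y^{1-c})^{ca}$ is just a rescaling of your choice $p=1/c$ (equivalently, taking $x=1/(c-1)$ in $\binom{ca}{a}x^a\le(1+x)^{ca}$). Your handling of non-integer $ca$ is a small bonus; the paper's proof tacitly treats $ca$ as an integer.
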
 

\begin{proof}
Set $y = (c-1)^{1/c}$, and note that $y/(c-1) = y^{1-c}$. It follows that 
\begin{align*}
\bigg( \frac{c^c}{(c-1)^{c-1}} \bigg)^a & \, = \,  \bigg( \bigg( 1+\frac{1}{c-1} \bigg) (c - 1)^{1/c}\bigg)^{ca} \\
& \, = \, \big(y + y^{1-c}\big)^{ca} = \sum_{i=0}^{ca} \binom{ca}{i} y^{ca - i} \cdot y^{(1-c)i} \ge \binom{ca}{a},
\end{align*}
where the last step follows by considering the term $i = a$.
\end{proof}


\section{Reducing to an interval}\label{stability:sec}

Let us fix $\lambda \ge 3$, and for each $n,k \in \N$ define
\begin{equation}\label{def:Lambda}
\Lambda = \Lambda(n,k) := \big\{ A \subset [n] \,:\, |A| = k \, \text{ and } \, |A+A| \le \lambda k \big\}.
\end{equation}
Let us also fix $\eps \in (0,1)$ (since Theorem~\ref{thm:structure} holds trivially for $\eps \ge 1$) and, writing $\ell(A)$ for the length of the smallest arithmetic progression containing $A$, define
\begin{equation}\label{def:Lambda:star}
\Lambda^* = \Lambda^*(n,k) := \big\{ A \in \Lambda : \ell(A) \le \lambda k/2 + c(\lambda,\eps) \big\}.
\end{equation}
In this section we will prove the following lemma, which reduces the problem of bounding $|\Lambda \setminus \Lambda^*|$ to that of bounding $|\cI|$ (see Definition~\ref{def:I}). Recall that $\delta = 2^{-32} \lambda^{-3}$.

\begin{lemma}\label{lem:stability}
Let $\lambda \ge 3$ and $n,k \in \N$, with $k \ge 2^{400} \lambda^{25}(\log n)^3$.  
We have
$$|\Lambda \setminus \Lambda^*| \le \frac{n^2}{k} \cdot |\cI| + \exp\bigg( - \frac{\delta k}{2^{10} \lambda} \bigg) {\lambda k / 2 \choose k}.$$
\end{lemma}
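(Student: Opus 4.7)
The plan is to apply Campos's stability theorem (Theorem~\ref{thm:M:stability}) with $\gamma := \delta/(2^{10}\lambda)$, then map each resulting ``stable'' set $A \in \Lambda \setminus \Lambda^*$ via an affine transformation to an element of $\cI$, and count affine preimages to obtain the $n^2/k$ factor.

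First I would check that the hypothesis $k \ge 2^{400}\lambda^{25}(\log n)^3$ comfortably implies $\gamma \ge 2^8 \lambda^{1/6} k^{-1/6}\sqrt{\log n}$, so that Theorem~\ref{thm:M:stability} applies. This yields that, except for at most $e^{-\gamma k}\binom{\lambda k/2}{k} = \exp(-\delta k/(2^{10}\lambda))\binom{\lambda k/2}{k}$ sets $A \in \Lambda$, there is a decomposition $A = B \cup T$ with $|T| \le 2^9 \gamma k = \delta k/(2\lambda)$ and $B \subset P$ for some arithmetic progression $P$ of length $|P| \le \lambda k/2 + 2^7 \gamma \lambda k = \lambda k/2 + \delta k/8$. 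These exceptional sets contribute directly to the second term of the claimed bound.

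For each stable $A \in \Lambda \setminus \Lambda^*$, write $d$ and $p_0$ for the common difference and minimum of $P$, and consider the affine transformation $\phi(x) = (x - p_0)/d + s$ for a shift $s$ to be chosen. The image $\phi(B)$ lies in the integer interval $\{s, s+1, \dots, s + |P| - 1\}$, and by choosing $s$ so that $\min A$ maps into $\{-\lambda k/2, \dots, 0\}$ and $\max A$ maps into $\{\lambda k/2 + 1, \dots, \lambda k\}$, I would ensure $\phi(A) \in \cI$: the estimate $b(\phi(A)) \le |T| + (|P| - \lambda k/2) \le \delta k/(2\lambda) + \delta k/8 \le \delta k$ is automatic, while $r(\phi(A)) \ge c(\lambda,\eps)$ follows from $A \notin \Lambda^*$. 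For the counting step, I would observe that for each $A' \in \cI$ the number of pairs $(a, d) \in \Z \times \Z_{\ne 0}$ with $\{a + d x : x \in A'\} \subset [n]$ is at most $n^2/(\max A' - \min A') \le 2n^2/(\lambda k) \le n^2/k$, using $\max A' - \min A' \ge \lambda k/2$. Summing over $A' \in \cI$ yields $|\cI| \cdot n^2/k$.

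The main obstacle is verifying that $\phi(A) \in \cI$ for every stable $A$: the map requires $d$ to divide $t - p_0$ for every $t \in T$ (else $\phi(T) \not\subset \Z$), and even when this holds $T$ may contain elements far enough from $P$ that $\max A - \min A > 3\lambda k/2$, preventing any affine image of $A$ from fitting inside $\{-\lambda k/2, \dots, \lambda k\}$. I would handle these ``long'' sets separately: writing $T_{\mathrm{far}}$ for the elements of $T$ lying far from $P$, the sumset bound $|A+A| \le \lambda k$ forces $|T_{\mathrm{far}}| = O(\lambda)$, and simultaneously forces the bulk $B' := A \setminus T_{\mathrm{far}}$ to have doubling close to $\lambda - |T_{\mathrm{far}}|$. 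Applying Theorem~\ref{thm:M:counting} to $B'$ (supplemented by Fre\u{\i}man's $3k-4$ theorem, Lemma~\ref{lem:F3k4}, when the doubling of $B'$ is close to $2$), and enumerating the $O(\lambda)$ ``far'' points gives at most $n^{|T_{\mathrm{far}}|} \exp(o(k)) \binom{\lambda k/2}{k}$ long sets. Since $n \le \exp(k^{1/4})$ by the hypothesis $k \ge (\log n)^4$, this is easily absorbed into $\exp(-\delta k/(2^{10}\lambda))\binom{\lambda k/2}{k}$.
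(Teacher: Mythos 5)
Your skeleton (apply Theorem~\ref{thm:M:stability}, then map the structured sets affinely into $\cI$ and count preimages to get the factor $n^2/k$) is exactly the paper's, but the step where you dispose of the sets that do \emph{not} map into $\cI$ has two genuine gaps. First, you correctly flag the divisibility problem --- elements $t \in T$ with $d \nmid t - p_0$ --- but your fix only treats $T_{\mathrm{far}}$, the elements far from $P$ in \emph{distance}. An element lying inside or near the convex hull of $P$ but in the wrong residue class modulo $d$ makes $\phi(A) \not\subset \Z$, yet it does not make the set ``long'', so such sets are neither mapped into $\cI$ nor counted by your separate argument. Second, the claim that $|A+A| \le \lambda k$ forces $|T_{\mathrm{far}}| = O(\lambda)$ is false: the stray elements may cluster in a short interval far from $P$, in which case their translates of $A' := A \cap P$ overlap and one only learns that $|A'+A'| \le (\lambda - 1 + \delta)k$. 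There can then be up to $\Theta(\delta k / \lambda)$ far points (take $A'$ inside a progression of length $(\lambda-1)k/2$ together with a clustered block at distance $10\lambda k$), the doubling of the bulk is about $\lambda - 1$ rather than $\lambda - |T_{\mathrm{far}}|$, so the gain from Theorem~\ref{thm:M:counting} is only $e^{-\Theta(k/\lambda)}$, while your enumeration cost $n^{|T_{\mathrm{far}}|}$ can be as large as $\exp\big( \Theta(\delta k \log n/\lambda) \big)$; this is not absorbed.

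The paper's Lemma~\ref{lem:stability:second} repairs both defects at once: if $A$ contains any element $x \notin P+P-P$ (which covers both wrong residue classes and excessive distance), then $x + A'$ is disjoint from $A'+A'$, forcing $|A'+A'| \le \lambda k - (1-\delta)k$, so Corollary~\ref{cor:prob:sumset} bounds the number of choices of $A'$ by $e^{-k/8\lambda}\binom{\lambda k/2}{k}$; and the stray elements are enumerated not one at a time at cost $n$ each, but via Ruzsa's covering lemma and the Pl\"unnecke--Ruzsa inequality (Lemmas~\ref{lem:RCL} and~\ref{lem:PR}), which place $B = A \setminus P$ inside $A'-A'+X$ with $|X| \le 2\lambda$, at total cost $n^{2\lambda}\binom{2\lambda^3 k}{|B|} \le e^{\delta^{1/2} k}$. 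You would need this covering step (or an equivalent) to make your ``long sets'' case work, and the remaining sets are then automatically inside a progression $\{a+jd : -\lambda k/2 \le j \le \lambda k\}$, so the affine reduction goes through as you describe. A smaller quantitative point: you spend the entire error budget $\exp(-\delta k/2^{10}\lambda)\binom{\lambda k/2}{k}$ on the exceptional sets of Theorem~\ref{thm:M:stability}, leaving no room in the stated bound for the additional bad sets; the paper takes $\gamma$ slightly larger so that all error terms fit.
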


To prove Lemma~\ref{lem:stability}, we will successively refine $\Lambda \setminus \Lambda^*$, at each step showing that some subset with a particular property is small. The first step in the proof of Lemma~\ref{lem:stability} is the following stability lemma, which is an almost immediate consequence of Theorem~\ref{thm:M:stability}. 

\begin{lemma}\label{lem:stability:first}
Let $\lambda \ge 3$ and $n,k \in \N$, with $k \ge 2^{400} \lambda^{25}(\log n)^3$. 
There are at most
$$\exp\bigg( - \frac{\delta k}{2^9 \lambda} \bigg)  {\lambda k / 2 \choose k}$$
sets $A \in \Lambda$ such that
$$| A \setminus P | \ge \delta k$$
for every arithmetic progression $P$ of size $\lambda k/2$.
\end{lemma}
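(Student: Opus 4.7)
The plan is to deduce the lemma essentially as a corollary of Theorem~\ref{thm:M:stability}, by choosing the parameter $\gamma$ in that theorem so that the guaranteed ``approximate'' arithmetic progression of length $\lambda k / 2 + 2^7 \gamma \lambda k$ can be truncated to one of length exactly $\lambda k / 2$ without losing more than $\delta k$ points of $A$.

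Concretely, I would set
$$\gamma \;:=\; \frac{\delta}{2^9 \lambda} \;=\; 2^{-41} \lambda^{-4},$$
and first verify the hypotheses of Theorem~\ref{thm:M:stability}. Clearly $\gamma < 2^{-8}$ since $\lambda \ge 3$, and the condition $\lambda \le 2^{-120} k / (\log n)^3$ is immediate from $k \ge 2^{400} \lambda^{25} (\log n)^3$. The remaining requirement $2^{8} \lambda^{1/6} k^{-1/6} \sqrt{\log n} \le \gamma$ rearranges to $k \ge 2^{294} \lambda^{25} (\log n)^3$, which is again comfortably implied by our assumption on $k$.

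Applying Theorem~\ref{thm:M:stability} with this $\gamma$ then says that all but at most
$$e^{-\gamma k} \binom{\lambda k/2}{k} \;=\; \exp\!\bigg( -\frac{\delta k}{2^9 \lambda} \bigg) \binom{\lambda k/2}{k}$$
sets $A \in \Lambda$ admit a set $T \subset A$ with $|T| \le 2^9 \gamma k$ such that $A \setminus T \subset P'$ for some arithmetic progression $P'$ of length $\lambda k / 2 + 2^7 \gamma \lambda k$. For any such ``good'' $A$, I would let $P$ be any sub-arithmetic-progression of $P'$ of length exactly $\lambda k / 2$ (for instance, the first $\lambda k / 2$ terms of $P'$), so that $|P' \setminus P| = 2^7 \gamma \lambda k$, and bound
$$|A \setminus P| \;\le\; |T| + |A \cap (P' \setminus P)| \;\le\; 2^9 \gamma k + 2^7 \gamma \lambda k \;=\; \frac{\delta k}{\lambda} + \frac{\delta k}{4} \;<\; \delta k,$$
using $\lambda \ge 3$ in the final step. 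Hence every good set admits an arithmetic progression $P$ of size $\lambda k/2$ with $|A \setminus P| < \delta k$, which means that the sets counted by the lemma are all among the exceptional ones, giving exactly the claimed bound.

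There is no real obstacle here: the content of the lemma is precisely the conversion of the ``AP of length $\lambda k/2 + O(\gamma \lambda k)$ missing $O(\gamma k)$ points'' conclusion of Theorem~\ref{thm:M:stability} into an ``AP of length exactly $\lambda k/2$ missing $<\delta k$ points'' statement, and this is accomplished by the single choice $\gamma = \delta/(2^9 \lambda)$, which makes both the truncation loss $2^7 \gamma \lambda k$ and the exceptional set $2^9 \gamma k$ strictly smaller than $\delta k$, while simultaneously matching the desired exponent $\delta k/(2^9 \lambda)$ in the probability bound. The only thing to be careful about is that $\gamma$ still lies within the admissible range for Theorem~\ref{thm:M:stability}, which is the reason for the explicit polylogarithmic hypothesis $k \ge 2^{400} \lambda^{25} (\log n)^3$.
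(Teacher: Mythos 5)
Your proposal is correct and is essentially the paper's own proof: the same choice $\gamma = 2^{-9}\lambda^{-1}\delta = 2^{-41}\lambda^{-4}$, the same verification of the hypotheses of Theorem~\ref{thm:M:stability}, and the same truncation of the progression of length $\lambda k/2 + 2^7\gamma\lambda k$ to one of length exactly $\lambda k/2$, absorbing the at most $2^7\gamma\lambda k$ lost points into $T$ so that $|A \setminus P| \le (2^9 + 2^7\lambda)\gamma k < \delta k$. The arithmetic and the exceptional-set bound $\exp(-\delta k/(2^9\lambda))\binom{\lambda k/2}{k}$ match the paper exactly.
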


\begin{proof}
Set $\gamma = 2^{-9} \lambda^{-1} \delta = 2^{-41} \lambda^{-4}$, and observe that, since $k \ge 2^{400} \lambda^{25}(\log n)^3$, we have
$$2^{8} \lambda^{1/6} k^{-1/6} \sqrt{\log n} \le \gamma < 2^{-8}.$$
Therefore, by Theorem~\ref{thm:M:stability}, for all but at most
$$\exp\bigg( - \frac{\delta k}{2^9\lambda} \bigg) {\lambda k / 2 \choose k}$$
sets $A \in \Lambda$, there exists $T \subset A$, with $|T| \le (2^9 + 2^7 \lambda) \gamma k < \delta k$ (moving some elements of the progression given by the theorem into $T$), such that $A \setminus T$ is contained in an arithmetic progression of size $\lambda k / 2$, as required.
\end{proof}

The next step is to show that almost all sets $A \in \Lambda$ are contained in an arithmetic progression of length $3\lambda k/2$. Let us write $\cF$ for the family of sets $A \in \Lambda$ such that
$$A \subset \big\{ a + j d : -\lambda k/2 \le j \le \lambda k \big\} \qquad \text{and} \qquad \big| A \setminus \big\{ a + j d : 1 \le j \le \lambda k / 2 \big\} \big| \le \delta k$$
for some $a,d \in \Z$. Recall that we assume, for simplicity, that $\lambda k/2$ is an integer. 

\begin{lemma}\label{lem:stability:second}
Let $\lambda \ge 3$ and $n,k \in \N$, with $k \ge 2^{400} \lambda^{25}(\log n)^3$. 
Then 
$$|\Lambda \setminus \cF| \le \exp\bigg( - \frac{\delta k}{2^{10} \lambda} \bigg)  {\lambda k / 2 \choose k}.$$
\end{lemma}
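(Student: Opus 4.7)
The plan is to apply Lemma~\ref{lem:stability:first} to discard an exceptional set of size at most $\exp(-\delta k / (2^9 \lambda)) \binom{\lambda k / 2}{k}$, leaving only sets $A \in \Lambda$ that each admit an AP $P^* = \{a + jd : 1 \le j \le \lambda k/2\}$ with $|A \setminus P^*| \le \delta k$. I will then count, by a separate union-bound argument, the ``good'' sets whose witnessing pair $(a,d)$ fails to place them in $\cF$, i.e.\ those for which some $b \in A$ lies outside the extended AP $\widetilde P := \{a + jd : -\lambda k/2 \le j \le \lambda k\}$.

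The key observation is that any such outlier is detected by the sumset. Writing $A' := A \cap P^*$ and $B := A \setminus P^*$, the set $b + A'$ is disjoint from $A' + A'$: either $b \not\equiv a \pmod d$, whence $b + A'$ and $A' + A'$ lie in different cosets of $d\Z$, or $b = a + j_0 d$ with $j_0 < -\lambda k/2$ or $j_0 > \lambda k$, in which case $b + A' \subset 2a + [j_0 + 1, j_0 + \lambda k/2]d$ lies strictly below or above $A' + A' \subset 2a + [2, \lambda k]d$. Combining this with $|A+A| \le \lambda k$ and $|A'| \ge (1-\delta)k$ yields
\[
|A' + A'| \,\le\, |A+A| - |A'| \,\le\, (\lambda - 1 + \delta) k,
\]
so $A'$ has doubling at most $\lambda^* := (\lambda - 1 + \delta)/(1-\delta)$, which is bounded away from $\lambda$ by at least $1/2$.

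It remains to count these bad sets via a union bound over $(P^*, b, A', B)$. There are at most $n^2$ choices for $P^*$ and $n$ for $b$; Theorem~\ref{thm:M:counting} applied inside $P^*$ (identified with $[\lambda k/2]$) gives at most $\exp(\delta k/(2^{11}\lambda)) \binom{\lambda^* (k - |B|)/2}{k - |B|}$ choices for $A'$; and Ruzsa's covering lemma (Lemma~\ref{lem:RCL}), applied to the pair $(A', A)$ with $|A + A'| \le \lambda k \le 2\lambda |A'|$, yields $X \subset A$ with $|X| \le 2\lambda$ such that $B \subset A' - A' + X$, a set of size at most $2\lambda^3 k$ by Lemma~\ref{lem:PR}. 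Summing over $X \subset [n]$ of size at most $2\lambda$ and over $B \subset A' - A' + X$ contributes at most $n^{O(\lambda)} \binom{2\lambda^3 k}{\delta k} = \exp(O(\lambda \log n) + O(\delta k \log \lambda))$ further factors. The main obstacle is to show that these combinatorial overheads are dominated by the savings from the smaller doubling of $A'$: a direct comparison of consecutive binomial ratios gives
\[
\binom{\lambda^* (k - |B|)/2}{k - |B|} \,\le\, \exp(-\Omega(k/\lambda)) \binom{\lambda k/2}{k},
\]
and the hypothesis $k \ge 2^{400} \lambda^{25} (\log n)^3$ ensures that $\lambda \log n$, $\delta k \log \lambda$, and $\delta k/(2^{10}\lambda)$ are all of order $o(k/\lambda)$, so every overhead is absorbed and the count of bad sets is at most $\exp(-\Omega(k/\lambda)) \binom{\lambda k/2}{k}$. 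Combining this with the exceptional set from Lemma~\ref{lem:stability:first} and noting that $\exp(-\delta k/(2^9\lambda)) \le \tfrac12 \exp(-\delta k/(2^{10}\lambda))$ in our range of $k$, the desired bound follows.
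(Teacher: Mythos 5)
Your proposal is correct and follows essentially the same route as the paper's proof: discard, via Lemma~\ref{lem:stability:first}, the sets far from every progression of length $\lambda k/2$; for the remaining $A \notin \cF$ with $|A \setminus P^*| \le \delta k$, find an element outside the extended progression whose translate of $A' = A \cap P^*$ is disjoint from $A' + A'$ (the paper phrases this as $A \not\subset P + P - P$, which is exactly the residue-class/range argument you give), so that $|A' + A'| \le \lambda k - (1-\delta)k$; then count $A'$ by a container consequence, count $B = A \setminus P^*$ by Ruzsa covering plus Pl\"unnecke--Ruzsa exactly as the paper does, and take a union bound over $P^*$ and $|B|$. The one genuine deviation is the middle counting step: the paper applies Corollary~\ref{cor:prob:sumset} (the sumset of $A'$ misses at least $k/2$ elements of $[\lambda k]$), whose error factor $\exp(2^{16} m^{7/8})$ carries no logarithms and whose main term is already $\binom{\lambda k/2}{k-b}$, whereas you apply Theorem~\ref{thm:M:counting} inside $P^*$ (doubling of $A'$ at most $\lambda - 1 + O(\delta)$) and then compare $\binom{(\lambda - 1 + \delta)k/2}{k-b}$ with $\binom{\lambda k/2}{k}$ by hand. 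Your route also works: the relevant logarithm in Theorem~\ref{thm:M:counting} is $\log(\lambda k/2)$, not $\log n$, and since the hypothesis forces $\lambda^{25} \le 2^{-400} k$ one gets $\lambda^{1/6} k^{5/6} \log(\lambda k)^{3/2} = o(k/\lambda)$, which is all the absorption the argument needs; note only that your stronger intermediate claim that this error is at most $\exp(\delta k/(2^{11}\lambda))$ (an extra $\lambda^{-3}$ in the exponent, via $\delta$) is not quite forced by the hypothesis at the most extreme parameter ranges, though this does not affect the conclusion. The paper's choice of Corollary~\ref{cor:prob:sumset} simply avoids this bit of bookkeeping.
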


\begin{proof}
Fix an arithmetic progression $P = \big\{ a + j d : 1 \le j \le \lambda k / 2 \big\}$. We will bound the number of sets $A \in \Lambda \setminus \cF$ with $|A \setminus P| \le \delta k$, and then sum over choices of $P$. We will then use Lemma~\ref{lem:stability:first} to count the remaining sets, and hence prove the lemma.

Note first that if $A \in \Lambda \setminus \cF$ and $|A \setminus P| \le \delta k$, then
$$A \not\subset P + P - P,$$ 
so let $Z := A \setminus (P + P - P)$ and choose an element $x \in Z$. We will first count the possible sets $A' := A \cap P$, and then (given $A'$) the choices for $B := A \setminus P$. Observe that
$$\big( x + A' \big) \cap \big( A' + A' \big) = \emptyset,$$
since $A' \subset P$, and that if $|A \setminus P| \le \delta k$, then $|x + A'| = |A'| \ge k - \delta k$. Since $A \in \Lambda$, it follows that
$$|A' + A'| \le \lambda k - \big( k - \delta k \big) \le \lambda k - k/2.$$ 
Hence, by Corollary~\ref{cor:prob:sumset} (applied with $m = k/2 \ge 2^{400} \lambda^{24}$), and using~\eqref{eq:prob:sumset:weak} and~\eqref{eq:binomial:usual:application}, it follows that, for each $b \le \delta k$, there are at most
$$\bigg( \frac{\lambda - 1}{\lambda} \bigg)^{k/4} {\lambda k / 2 \choose k - b} \le \exp\bigg( - \frac{k}{8 \lambda} \bigg) {\lambda k / 2 \choose k}$$
choices for the set $A' = A \cap P$ such that $|A'| = k - b$. 

To count the sets $B$ (given $A'$), we apply Ruzsa's covering lemma (Lemma~\ref{lem:RCL}) to the pair $(A',B)$ to obtain a set $X \subset B$, with $|X| \le |A' + B| / |A'| \le \lambda k / (k - b) \le 2\lambda$, such that $B \subset A' - A' + X$. Moreover, by the Pl\"unnecke--Ruzsa inequality (Lemma~\ref{lem:PR}),
$$|A' - A' + X| \le |X| \cdot |A' - A'| \le 2\lambda^3 k.$$ 
Hence, choosing $X$ first and then $B \setminus X$, and recalling that $b \le \delta k = 2^{-32} \lambda^{-3} k$, and that $k \ge 2^{400} \lambda^{25}(\log n)^3$, 
it follows that there are at most
$$n^{2\lambda} {2\lambda^3 k \choose b - 2\lambda} \le \exp\Big( \delta k \log\big( 2e\lambda^3 / \delta \big) + 2\lambda \log n \Big) \le \exp\big( \delta^{1/2} k \big)$$
choices for the set $B$, given a set $A'$ with $|A'| = k - b$. 

Combining the bounds above on the number of choices for $A'$ and $B$, it follows that the number of sets $A \in \Lambda$ with $Z$ non-empty is at most
$$\sum_{b = 1}^{\delta k} \exp\bigg(  \delta^{1/2} k - \frac{k}{8 \lambda} \bigg) {\lambda k / 2 \choose k} \le \, \exp\bigg( - \frac{k}{2^4 \lambda} \bigg) {\lambda k / 2  \choose k},$$
Summing over choices of $P$, and using Lemma~\ref{lem:stability:first} to bound the number of sets such that $| A \setminus P' | \ge \delta k$ for every arithmetic progression $P'$ of size $\lambda k/2$, the lemma follows.
\end{proof}

Finally, to bound $|\Lambda \setminus \Lambda^*|$ in terms of $|\cI|$, we need to map our arithmetic progression $P$ into the interval $[\lambda k / 2]$. Lemma~\ref{lem:stability} will follow from Lemma~\ref{lem:stability:second} and the following bound. 


\begin{lemma}\label{lem:red-to-int}
Let $\lambda \ge 3$ and $n,k \in \N$. Then
$$|\mathcal{F} \setminus \Lambda^*| \le \frac{n^2}{k} \cdot |\cI|.$$
\end{lemma}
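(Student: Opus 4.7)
\emph{Plan.} The strategy is to construct, for each $A \in \cF \setminus \Lambda^*$, a set $A^* \in \cI$ and a pair $(a, d) \in \Z \times \Z_{>0}$ with $A = a + d \cdot A^*$, and then bound the number of valid $(a, d)$ for each fixed $A^*$.

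Given $A \in \cF \setminus \Lambda^*$, the definition of $\cF$ supplies a pair $(a_0, d_0)$ with $A \subset \{a_0 + j d_0 : -\lambda k / 2 \le j \le \lambda k\}$ and $|A \setminus \{a_0 + j d_0 : 1 \le j \le \lambda k/2\}| \le \delta k$; after possibly replacing $d_0$ by $-d_0$ and reindexing the windows symmetrically, we may assume $d_0 > 0$. Set $A' := (A - a_0)/d_0 \subset \{-\lambda k/2, \ldots, \lambda k\}$. Then $|A'| = k$, $|A' + A'| = |A + A| \le \lambda k$ (both preserved by the affine map $j \mapsto a_0 + j d_0$), and $b(A') \le \delta k$. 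Since $A \notin \Lambda^*$, the shortest arithmetic progression containing $A$ has length exceeding $\lambda k/2 + c(\lambda, \eps)$, and as $A$ lies in the progression with common difference $d_0$ and length $\max A' - \min A' + 1$, this forces $r(A') \ge c(\lambda, \eps)$. To enforce the remaining condition of $\cI$, I will replace $a_0$ by $a_0 - \tau d_0$ for a suitable integer $\tau$---equivalently, translate $A'$ to $A^* := A' + \tau$---so that $A^* \subset \{-\lambda k/2, \ldots, \lambda k\}$, $b(A^*) \le \delta k$, and both $\{x \in A^* : x \le 0\}$ and $\{x \in A^* : x > \lambda k/2\}$ are non-empty. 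Such a $\tau$ exists because the set of shifts making both tails non-empty is the integer interval $\big(\lambda k/2 - \max A',\, -\min A'\big]$, of length $r(A') \ge c(\lambda, \eps) \ge 1$; setting $(a, d) := (a_0 - \tau d_0, d_0)$ then yields $A^* \in \cI$ together with the factorisation $A = a + d \cdot A^*$.

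For the fiber count, fix $A^* \in \cI$ and observe that any $(a, d) \in \Z \times \Z_{>0}$ with $a + d \cdot A^* \subset [n]$ must satisfy $d \cdot (\max A^* - \min A^*) \le n - 1$. Since $\max A^* - \min A^* = \lambda k/2 + r(A^*) \ge \lambda k/2$, this forces $d \le 2n/(\lambda k)$, and for each such $d$ the shift $a$ ranges over an interval of at most $n - d \lambda k/2$ integers. Therefore
\[
\sum_{d=1}^{\lfloor 2n/(\lambda k) \rfloor} \big( n - d \lambda k /2 \big) \,\le\, \frac{n^2}{\lambda k} \,\le\, \frac{n^2}{3k} \,<\, \frac{n^2}{k},
\]
using $\lambda \ge 3$, and summing the fiber bound over $A^* \in \cI$ yields $|\cF \setminus \Lambda^*| \le (n^2/k) \cdot |\cI|$.

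The only delicate point is verifying that the translation $\tau$ can be chosen so that $b(A^*) \le \delta k$ is preserved: a unit shift of $A'$ changes $b$ by $\pm 1$ (according to whether the element at $1$ leaves the canonical window $[1, \lambda k/2]$ and whether the element at $\lambda k/2 + 1$ enters it), so in principle a single shift could push $b$ above $\delta k$. However, a short sliding-window analysis across the `both-tails' interval of length $r(A') \ge c(\lambda, \eps)$, tracking which elements of $A'$ cross the boundaries as $\tau$ varies, produces an admissible $\tau$; this is the main---and only---technical step in the proof.
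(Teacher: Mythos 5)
Your overall strategy is the same as the paper's: normalise $A$ by the progression witnessing membership in $\cF$, deduce $r(A')\ge c(\lambda,\eps)$ from $A\notin\Lambda^*$, translate so that both tails become non-empty, and bound each fibre by counting pairs $(a,d)$. The fibre count (via $d\le 2n/\lambda k$ and summing over $d$) and the deduction $r(A')\ge c(\lambda,\eps)$ are fine.

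The gap is precisely the step you yourself call ``the main---and only---technical step'': you never carry out the promised sliding-window analysis, and in fact no such analysis can succeed at the threshold $b(A')=\lfloor\delta k\rfloor$. Take $b_0:=\lfloor\delta k\rfloor$, $r:=b_0+c(\lambda,\eps)$, $k$ large, and
$$A' \,=\, \{1,\ldots,k-b_0\}\,\cup\,\{\lambda k/2+r+2-b_0,\ldots,\lambda k/2+r+1\}.$$
Then $|A'|=k$; the sumset $A'+A'$ is contained in a union of three intervals of total length $3k-3\le\lambda k$; $b(A')=b_0\le\delta k$; and $\max A'-\min A'=\lambda k/2+r$, so (for $n$ somewhat larger than $\lambda k$) any placement $a+A'\subset[n]$ with common difference $1$ lies in $\cF\setminus\Lambda^*$. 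The shifts making both tails non-empty are exactly $A^*=A'-t$ with $1\le t\le r$, and for each such $t$ the set $A^*$ has exactly $t$ elements $\le 0$ and exactly $\min(b_0,\,r+1-t)$ elements $>\lambda k/2$, so $b(A^*)=t+\min(b_0,r+1-t)\ge b_0+1>\delta k$: the bottom of $A'$ is a solid block, so each unit of shift expels one element on the left, while the $b_0$ elements above $\lambda k/2$ are packed at the very top and re-enter too late to compensate. Since $1,2\in A'$ the common difference cannot be changed, and the reflected orientation fails by the same computation. So for such sets your construction produces no admissible $\tau$ at all, and the ``short sliding-window analysis'' you invoke does not exist in the generality you need.

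To be fair, the paper's own proof also dispatches this translation in one line; but your proposal rests explicitly on the window-sliding claim, which is false at the boundary of the constraint. Closing the gap requires an extra input rather than more careful sliding: for instance, the sets that actually feed into $\cF$ arrive, via Lemma~\ref{lem:stability:first}, with outside count at most $(2^9+2^7\lambda)\gamma k$, comfortably below $\delta k$, so one can run the reduction with a strictly smaller threshold in the definition of $\cF$ (or a slightly relaxed threshold in Definition~\ref{def:I}), after which the minimal shift argument goes through. As written, your proof is incomplete at its central step.
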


\begin{proof}
We will define a function $\varphi \colon \cF \setminus \Lambda^* \to \cI$ such that $|\varphi^{-1}(S)| \le n^2/k$ for every $S \in \cI$, which will suffice to prove the lemma. To do so, let $A \in \cF \setminus \Lambda^*$, and choose $a,d \in \N$ such that
$$A \subset \big\{ a + j d : -\lambda k/2 \le j \le \lambda k \big\}$$
and such that the sets
\begin{equation}\label{eq:nonempty:sets}
\big\{ x \in A : x \le a \big\} \qquad \text{and} \qquad \big\{ x \in A : x > a + \lambda k d / 2 \big\}
\end{equation}
are both non-empty and together contain at most $\delta k$ elements. Indeed, to obtain such a pair, take the arithmetic progression given by the definition of $\cF$, and (recalling the definition~\eqref{def:Lambda:star} of $\Lambda^*$) translate it if necessary so that the sets in~\eqref{eq:nonempty:sets} are both non-empty. Now define
$$\varphi(A) := \big\{ j \in \Z \,:\, a + jd \in A \big\},$$
and observe that $\varphi(A) \subset \{ -\lambda k/2,\ldots,\lambda k\}$, and that
$$b\big( \varphi(A) \big) = \big| \big\{ x \in \varphi(A) : x \le 0 \big\} \big| + \big| \big\{ x \in \varphi(A) : x > \lambda k / 2 \big\} \big| \le \delta k.$$
Moreover, we have
$$r\big( \varphi(A) \big) = \max\big( \varphi(A) \big) - \min\big( \varphi(A) \big) - \frac{\lambda k}{2} > c(\lambda,\eps),$$
since $A \not\in \Lambda^*$, and hence $\varphi(A) \in \cI$, as required.

Finally, observe that $|\varphi^{-1}(S)|$ is bounded from above by the number of pairs $(a,d) \in \Z^2$ such that $A := \{ a + jd : j \in S \} \subset [n]$. For each set $S$ of size $k$ there are at most
$$\sum_{a = 1}^n \frac{n - a}{k - 1} \, \le \, \frac{n^2}{k}$$
such pairs $(a,d)$. Hence $|\varphi^{-1}(S)| \le n^2/k$, as claimed, and the lemma follows.
\end{proof}

We are now ready to prove Lemma~\ref{lem:stability}.

\begin{proof}[Proof of Lemma~\ref{lem:stability}]
By Lemmas~\ref{lem:stability:second} and~\ref{lem:red-to-int}, we have 
$$|\Lambda \setminus \Lambda^*| \le |\Lambda \setminus \cF| + |\cF \setminus \Lambda^*| \le \exp\bigg( - \frac{\delta k}{2^{10} \lambda} \bigg) {\lambda k / 2 \choose k} + \frac{n^2}{k} \cdot |\cI|,$$
as claimed.
\end{proof}

\section{Counting the sparse sets in \texorpdfstring{$\cI$}{I}}
\label{sec:spreadout}

Recall that, for any $A \subset \Z$,
$$b(A) = |A \setminus [\lambda k / 2]| \qquad \text{and} \qquad r(A) = \max(A) - \min(A) - \lambda k / 2,$$
and that $f(\lambda) = 2^{10} \lambda^3$, and (recalling Definition~\ref{def:I}) let us write 
$$\cS := \Big\{ A \in \cI \,:\, r(A) > f(\lambda) b(A) \Big\}$$
for the family of `sparse' sets in $\cI$. In this section we will bound the size of $\cS$, and hence prove the following quantitative version of Lemma~\ref{lem:sparse}. 

\begin{lemma}\label{lem:sparse:quant}
Let $\lambda \ge 3$ and $\eps \in (0,1)$, and let $k \in \N$. Then 
$$|\cS| \le \exp\bigg( - \frac{c(\lambda,\eps)}{2^9 \lambda^2} \bigg) {\lambda k/2 \choose k}.$$
\end{lemma}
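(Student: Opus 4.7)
The plan is to parameterise each $A \in \cS$ by $r := r(A)$, $b := b(A)$, $\ell := \min A \in (-r, 0]$ and $m := \max A = \ell + \lambda k/2 + r$, then decompose $A = A' \sqcup B$ with $A' := A \cap [\lambda k/2]$ and $B := A \setminus [\lambda k/2]$, so $|A'| = k - b$, $|B| = b$, and $\{\ell, m\} \subset B$. The bound on $|\cS|$ will follow by bounding the choices for $B$ and $A'$ separately and then summing over $\ell$, $b$ and $r$.

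\emph{Counting $B$.} Once $\ell$ and $r$ are fixed, $m$ is determined and the $b - 2$ remaining elements of $B$ lie in $(\ell, 0] \cup (\lambda k/2, m)$, a set of size at most $r - 1$; hence there are at most $\binom{r}{b-2}$ choices for $B$. Since $A \in \cS$ forces $b \le r/f(\lambda) = 2^{-10} \lambda^{-3} r$, this factor is $\exp(O(b \log \lambda))$, which will be dwarfed by the savings obtained in the $A'$ step (and by the $(2/(\lambda-2))^b$ factor produced by~\eqref{eq:binomial:usual:application}).

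\emph{Counting $A'$.} Set $J := [1, -\ell] \cup (\lambda k - m, \lambda k/2]$, a disjoint union (since $r \le \lambda k / 2$) of size $r$. The shifts $(A' + m) \setminus [\lambda k]$ and $(A' + \ell) \cap \Z_{\le 0}$ have sizes $|A' \cap (\lambda k - m, \lambda k/2]|$ and $|A' \cap [1, -\ell]|$ respectively, and are disjoint subsets of $(A + A) \setminus (A' + A')$, so
$$\big| [\lambda k] \setminus (A' + A') \big| \,=\, \lambda k - |A' + A'| \,\ge\, |A + A| - |A' + A'| \,\ge\, |A' \cap J|.$$
For a uniformly random $(k - b)$-subset of $[\lambda k / 2]$ the mean of $|A' \cap J|$ is roughly $2 r / \lambda$, so we split on whether $|A' \cap J|$ exceeds the threshold $T_0 := r/(4 \lambda)$; the assumption $r \ge c(\lambda, \eps) \ge 2^{560} \lambda^{32}$ ensures $T_0 \ge 2^{400} \lambda^{24}$, as needed for Corollary~\ref{cor:prob:sumset}. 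In the \emph{large} case $|A' \cap J| \ge T_0$, Corollary~\ref{cor:prob:sumset} (applied with $m := T_0$) produces at most
$$e^{2 \delta T_0} \Big( \tfrac{\lambda - 2}{\lambda} \Big)^{T_0 / 2} \binom{\lambda k/2}{k - b} \,\le\, \exp\!\Big( -\tfrac{r}{8 \lambda^2} \Big) \binom{\lambda k/2}{k - b}$$
such $A'$. In the \emph{small} case $|A' \cap J| < T_0$, the direct count
$$\sum_{j = 0}^{T_0 - 1} \binom{r}{j} \binom{\lambda k/2 - r}{k - b - j}$$
equals $\binom{\lambda k/2}{k-b}$ times the probability that a hypergeometric random variable of mean $\ge r/\lambda$ is less than $T_0 = r/(4\lambda)$, which a Chernoff bound caps at $\exp(-r/(8\lambda))$.

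\emph{Assembling, and the main obstacle.} Multiplying the $B$- and $A'$-counts, converting $\binom{\lambda k/2}{k-b}$ to $\binom{\lambda k/2}{k}$ via~\eqref{eq:binomial:usual:application}, and summing the polynomial-in-$r$ factors over $\ell \in (-r, 0]$, $b \in [2, r/f(\lambda)]$ and $r \ge c(\lambda, \eps)$, the contribution of each $r$ is at most $\exp(-r/(2^8 \lambda^2)) \binom{\lambda k/2}{k}$; geometric summation in $r$ then yields the lemma. The main technical subtlety is the need to handle both endpoints $\ell$ and $m$ symmetrically by combining them into $J$: the one-sided quantity $|(A' + m) \setminus [\lambda k]|$ featured in the sketch of Section~\ref{overview:sec} (under the simplification $\min A = 0$) has expected value only $2(r + \ell)/\lambda$, which shrinks to zero as $\ell \downarrow -r$, so using the two-sided $|A' \cap J|$ is essential to obtain savings that are uniform in $\ell$.
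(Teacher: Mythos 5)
Your argument is correct and is essentially the paper's own proof in a slightly reshuffled order: your set $J$ is exactly the (translated) set $Y$ of~\eqref{def:Y}, your dichotomy on $|A' \cap J|$ versus $r/(4\lambda)$ with Corollary~\ref{cor:prob:sumset} in the large case and a hypergeometric lower-tail estimate in the small case matches Lemmas~\ref{lem:claim1} and~\ref{lem:claim2} (which use threshold $r/(8\lambda)$ and bound the tail by direct binomial manipulation), and the trivial $\binom{r}{b-2}$ count for $B$ using sparseness is the paper's $\binom{r}{b}$ step. The one point you gloss over is the parenthetical ``since $r \le \lambda k/2$'': this need not hold (one can have $r$ up to about $\lambda k$), but as in the proof of Lemma~\ref{lem:claim2} the fix is one line — if $r \ge \lambda k/2$ then $J \supseteq [\lambda k/2]$, so $|A' \cap J| = k - b > r/(4\lambda)$ and every such set falls into your large case, where the disjointness of the two shifted copies (and hence the bound $|[\lambda k] \setminus (A'+A')| \ge |A' \cap J|$) holds regardless.
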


\medskip

For each $B \subset \{-\lambda k / 2, \ldots, \lambda k \} \setminus [\lambda k/2]$, let us define\footnote{Note that we include sets of $\cI \setminus \cS$ in $\cG(B)$; we will not need to use the bound $r(A) > f(\lambda) b(A)$ when bounding the size of $\cG(B)$ (we use it only when counting the choices for the set $B$), and we shall also want to reuse our bounds on $|\cG(B)|$ in Section~\ref{sec:somewhat:dense}, below, where we will be dealing with dense sets.} 
\begin{equation}\label{eq:cG(B,B)}
\cG(B) := \big\{ A \in \cI \,:\, A \setminus [\lambda k / 2] = B \big\}.
\end{equation}
Recalling Definition~\ref{def:I}, observe that $\cG(B) = \emptyset$ if either $\min(B) > 0$ or $\max(B) \le \lambda k / 2$, and also if either $|B| > \delta k$ or $r(B) < c(\lambda,\eps)$ (note that $r(A) = r(B)$ for every $A \in \cG(B)$). 

We will deduce Lemma~\ref{lem:sparse:quant} from the following bound on the size of $\cG(B)$ by summing over $r \geq c(\lambda,\eps)$ and sets $B$ with $|B| < r / f(\lambda)$.

\begin{lemma}\label{lem:size_of_g}
If $B \subset \{-\lambda k / 2, \ldots, \lambda k \} \setminus [\lambda k/2]$, then 
$$|\cG(B)| \le \exp\bigg( - \frac{r}{2^6 \lambda^2} \bigg) {\lambda k/2 \choose k - b}$$
where $b = |B|$ and $r = r(B)$.
\end{lemma}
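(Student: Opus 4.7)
Given $B$, write $A' := A \cap [\lambda k/2]$, $s_+ := \max(B)$, $s_- := \min(B)$, and set $\alpha := s_+ - \lambda k/2$, $\beta := -s_-$, so that $\alpha + \beta = r$. We may assume $b \le \delta k$, $r \ge c(\lambda,\eps)$, $\alpha \ge 1$ and $\beta \ge 0$ (else $\cG(B) = \emptyset$ and the bound is trivial), and in particular $\min(A) = s_-$ and $\max(A) = s_+$ for every $A \in \cG(B)$.

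The central observation is the following. For each candidate $A' \in \binom{[\lambda k/2]}{k-b}$ with $A = A' \cup B \in \cG(B)$, the sumset $A + A$ contains three groups of elements that lie in pairwise disjoint ranges of $\Z$: the set $A' + A' \subset [2, \lambda k]$; the ``right overflow'' $(A' + s_+) \cap (\lambda k, \infty)$, of size $T_+ := |A' \cap [\lambda k/2 - \alpha + 1, \lambda k/2]|$; and the ``left overflow'' $(A' + s_-) \cap (-\infty, 1]$, of size $T_- := |A' \cap [1, \beta + 1]|$. Writing $M(A') := [\lambda k] \setminus (A' + A')$, we have $|A' + A'| = \lambda k - |M(A')|$, so the assumption $|A + A| \le \lambda k$ forces the key inequality
$$|M(A')| \ge T_+ + T_-.$$

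Fix $t_0 := \lceil r/(2^4 \lambda) \rceil$ and split $\cG(B)$ according to whether $T_+ + T_- \ge t_0$. In the first case, $|M(A')| \ge t_0 \ge 2^{400}\lambda^{24}$ (since $r \ge c(\lambda,\eps) \ge 2^{560}\lambda^{32}$), so Corollary~\ref{cor:prob:sumset} bounds the number of such $A'$ by $e^{2\delta t_0}\big((\lambda-2)/\lambda\big)^{t_0/2}\binom{\lambda k/2}{k-b}$; a short calculation using $\log(1 - 2/\lambda) \le -2/\lambda$ and $\delta = 2^{-32}\lambda^{-3}$ reduces this to at most $\exp(-r/(2^6 \lambda^2))\binom{\lambda k/2}{k-b}$. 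In the second case, $T_+ + T_- < t_0$ implies $|A' \cap R| < t_0$, where $R := [1, \beta + 1] \cup [\lambda k/2 - \alpha + 1, \lambda k/2]$. If the two intervals defining $R$ are disjoint, then $|R| = r + 1$ and $\Ex |A' \cap R| \ge (7/4) r/\lambda$, so a hypergeometric Chernoff bound gives at most $\exp(-cr/\lambda)\binom{\lambda k/2}{k-b}$ such sets, which (for $\lambda \ge 3$) is far smaller than the target; if the intervals overlap, then $R = [\lambda k/2]$ and $|A' \cap R| = k - b > t_0$, so this case is vacuous. Summing the two contributions, and using $r \gg \lambda^2$ to absorb the constant factor, yields the required bound.

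The main obstacle is establishing the key inequality $|M(A')| \ge T_+ + T_-$, which hinges on carefully identifying three disjoint-range contributions to $A + A$: one from $A' + A'$ itself, and two from the translates of $A'$ by the extreme points of $B$. Once this is in hand, the rest of the proof is a clean interplay between Corollary~\ref{cor:prob:sumset} (supplying the dominant exponent in the ``large $T_+ + T_-$'' case) and a routine hypergeometric tail bound (for the ``small $T_+ + T_-$'' case).
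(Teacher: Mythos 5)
Your proposal is correct and takes essentially the same route as the paper's proof: your key inequality $|M(A')| \ge T_+ + T_-$ is exactly the paper's observation that $A'+A'$ and the translates of $A'$ by $\min(B),\max(B)$ contribute disjointly to $A+A$ (their set $Y$ plays the role of your $T_+ + T_-$), and the dichotomy between ``many missing sums'' (handled via Corollary~\ref{cor:prob:sumset}) and ``few elements of $A'$ near the ends'' (handled by a direct count) is the same, with your vacuous overlapping-intervals case matching the paper's treatment of $r \ge \lambda k/2$. The only differences are cosmetic: the paper uses the threshold $m(B)=r/8\lambda$ rather than $r/2^4\lambda$, and writes out your hypergeometric lower-tail bound as an explicit binomial sum.
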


For each $A \in \cG(B)$, set $A' := A \setminus B$. The idea of the proof is simple: if $A'$ contains many elements close to its ends, then we can add these to $\min(B)$ and $\max(B)$, and obtain many elements of $A + A$ outside $[\lambda k]$. Therefore, either $A'+ A'$ misses many elements of $[\lambda k]$, in which case we can apply Corollary~\ref{cor:prob:sumset} to bound the number of choices, or it has few elements close to its ends, and it is straightforward to count sets $A'$ with this property. 

To be precise, define
\begin{equation}\label{def:Y}
Y := \big\{ x \le 0 : x - \min(B) \in A' \big\} \cup \big\{ x > \lambda k : x - \max(B) \in A' \big\},
\end{equation}
and set $m(B) := r(B) / 8\lambda$. The following bound follows from some simple counting. 

\begin{lemma}\label{lem:claim2}
If $B \subset \{-\lambda k / 2, \ldots, \lambda k \} \setminus [\lambda k/2]$, then there are at most
$$e^{-m(B)} {\lambda k/2 \choose k - b}$$
sets $A \in \cG(B)$ with $|Y| \le m(B)$.
\end{lemma}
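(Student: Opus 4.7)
Each $A \in \cG(B)$ is determined by its ``middle part'' $A' := A \setminus B$, which is a $(k-b)$-subset of $[\lambda k/2]$. Writing $s := -\min(B) \ge 0$ and $t := \max(B) - \lambda k/2 \ge 1$, so that $r = s + t$, I would first observe, directly from~\eqref{def:Y}, that the translation $x \mapsto x + s$ identifies $\{x \in Y : x \le 0\}$ with $A' \cap [1, s]$, while $x \mapsto x - \max(B)$ identifies $\{x \in Y : x > \lambda k\}$ with $A' \cap [\lambda k/2 - t + 1, \lambda k/2]$. Hence $|Y|$ is exactly the number of elements of $A'$ lying in these two ``end intervals''.

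If $r \ge \lambda k/2$ the two end intervals cover $[\lambda k/2]$ (possibly overlapping), so $|Y| \ge |A'| = k - b > m(B)$ and the count is zero. In the complementary case $r < \lambda k/2$ the intervals are disjoint, and splitting $A'$ according to how many elements it has in $[1, s]$, in $[\lambda k/2 - t + 1, \lambda k/2]$, and in the middle, followed by Vandermonde's identity, gives
\[
\#\{A \in \cG(B) : |Y| \le m(B) \} \;\le\; \sum_{u = 0}^{\lfloor m(B) \rfloor} \binom{r}{u}\binom{\lambda k/2 - r}{k - b - u}.
\]
Intuitively this is (up to dividing by $\binom{\lambda k/2}{k-b}$) the lower-tail probability that a random $(k-b)$-subset of $[\lambda k/2]$ meets a fixed $r$-subset in at most $m(B) = r/(8\lambda)$ points; since the expectation of this intersection is $\ge 2r(1-\delta)/\lambda \approx 16 m(B)$, one expects exponential decay.

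To make this quantitative I would apply the inequality~\eqref{eq:binomial:classic:too} with $a = \lambda k/2$, $c = r$, $d = u$, together with the crude estimates $(k-b)/(\lambda k/2 - k + b) \le 2/(\lambda - 2)$ and $1 - 2r/(\lambda k) \le e^{-2r/(\lambda k)}$, to obtain
\[
\binom{\lambda k/2 - r}{k - b - u} \;\le\; e^{-2r(k - b - u)/(\lambda k)} \left( \frac{2}{\lambda - 2} \right)^u \binom{\lambda k / 2}{k - b}.
\]
Since $b \le \delta k$ and $u \le m(B) \le k/16$, one has $k - b - u \ge 3k/4$, so the exponential factor is at most $e^{-12 m(B)}$. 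Combining this with $\binom{r}{u} \le (er/u)^u$ and observing that $u \mapsto (2er/(u(\lambda - 2)))^u$ is increasing on $[0, 2r/(e(\lambda - 2))]$, an interval that contains $[0, m(B)]$, the individual summand is maximised at $u = m(B)$, giving $e^{-12 m(B)} \cdot (16 e \lambda / (\lambda - 2))^{m(B)} \binom{\lambda k/2}{k - b} \le e^{-7 m(B)}\binom{\lambda k/2}{k - b}$ for $\lambda \ge 3$ (using $16 e \lambda/(\lambda - 2) \le 48 e < e^5$). Summing the $m(B) + 1 \le e^{m(B)}$ terms completes the proof.

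The main obstacle is the calibration in this final estimate: the decay $e^{-12 m(B)}$ coming from $(1 - 2r/(\lambda k))^{k - b - u}$ must dominate both the combinatorial factor $(er/u)^u$ (peaking at $e^{5 m(B)}$ over the allowed range of $u$) and the loss $(2/(\lambda - 2))^u$. The specific choice $m(B) = r/(8\lambda)$ in the statement is exactly what tips the balance in our favour.
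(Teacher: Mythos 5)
Your proposal is correct and follows essentially the same route as the paper's proof: both dispose of the degenerate case $r \ge \lambda k/2$ by noting $|Y| \ge |A'|$, then bound the count by $\sum_{\ell \le m(B)} \binom{r}{\ell}\binom{\lambda k/2 - r}{k-b-\ell}$, apply~\eqref{eq:binomial:classic:too}, extract exponential decay from $\big(1 - \tfrac{2r}{\lambda k}\big)^{k-b-\ell}$, and control the combinatorial factor via the monotonicity of $(C/u)^u$. Your constants differ slightly from the paper's (which gets $e^{-8m}$ against $(2^7e)^m$), but the calibration works out the same way.
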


\begin{proof}
We claim first that if $r := r(B) \ge \lambda k / 2$, then there are no such sets $A \in \cG(B)$. Indeed, if $\max(B) - \min(B) \ge \lambda k$ then for each $y \in A'$ either $y+ \min(B) \le 0$, or $y + \max(B) > \lambda k$, and therefore $|Y| \ge |A'|$. It follows that if $A \in \cG(B)$ with $|Y| \le m := m(B)$, then $m \ge |Y| \ge |A'| = k - b \ge k/4$, since $b(A) \le \delta k$ for every $A \in \cI$. But this implies that $r = 8\lambda m > \lambda k$, which is impossible. Let us therefore assume that $r < \lambda k / 2$. 

Now, the number of sets $A \in \cG(B)$ with $|Y| \le m$ is at most
\begin{equation}\label{eq:G:count:Y:small}
\sum_{\ell = 0}^{m} {r \choose \ell} {\lambda k/2 - r \choose k - b - \ell} \, \le \; \sum_{\ell = 0}^{m} \bigg( \frac{er}{\ell} \bigg)^\ell \left( 1 - \frac{2r}{\lambda k} \right)^{k-b-\ell} \left(\frac{2}{\lambda - 2} \right)^\ell {\lambda k/2 \choose k - b},
\end{equation}
where the inequality holds by~\eqref{eq:binomial:classic:too}. Now, observe that 
$$\left( 1 - \frac{2r}{\lambda k} \right)^{k-b-\ell} \le \left( 1 - \frac{2r}{\lambda k} \right)^{k/2} \le \exp\bigg( - \frac{r}{\lambda} \bigg) = e^{-8m},$$
since $b + \ell \le k/2$ and $r = 8\lambda m$, and that 
$$\sum_{\ell = 0}^{m} \bigg( \frac{er}{\ell} \cdot \frac{2}{\lambda - 2} \bigg)^\ell \le \, \sum_{\ell = 0}^{m} \bigg( \frac{2^4 e \lambda}{\lambda - 2} \cdot \frac{m}{\ell} \bigg)^\ell \le (m + 1) \bigg( \frac{2^4 e \lambda}{\lambda - 2} \bigg)^m \le \big( 2^7 e \big)^m,$$
since $r = 8\lambda m$ and $\lambda \ge 3$, and since $(C/x)^x$ is increasing for $x < C/e$. It follows that the right-hand side of~\eqref{eq:G:count:Y:small} (and hence the number of sets $A \in \cG(B)$ with $|Y| \le m$) is at most
$$\left( \frac{2}{e} \right)^{7m} {\lambda k/2 \choose k - b} \le e^{-m} {\lambda k/2 \choose k - b},$$
as claimed. 
\end{proof}

It remains to count sets $A \in \cG(B)$ with $|Y| > m$. To do so, set $X := A'+ A'$, and observe that $X$ and $Y$ are disjoint subsets of $A + A$. Since $|A+A| \le \lambda k$, it follows that
\begin{equation}\label{eq:G:key:obs}
\big| [\lambda k] \setminus X \big| \ge |Y|  > m(B).
\end{equation}
We will use Corollary~\ref{cor:prob:sumset} to count the sets with $| [\lambda k] \setminus X | \ge m(B)$.

\begin{lemma}\label{lem:claim1}
If $B \subset \{-\lambda k / 2, \ldots, \lambda k \} \setminus [\lambda k/2]$, then there are at most
$$\bigg( \frac{\lambda - 1}{\lambda} \bigg)^{m(B)/2} {\lambda k/2 \choose k - b}$$
sets $A \in \cG(B)$ with $| [\lambda k] \setminus X | \ge m(B)$.
\end{lemma}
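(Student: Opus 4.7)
The plan is to observe that Lemma~\ref{lem:claim1} is essentially a direct application of Corollary~\ref{cor:prob:sumset}. Since $B$ is fixed, each set $A \in \cG(B)$ is determined by $A' := A \setminus B = A \cap [\lambda k / 2]$, which is a subset of $[\lambda k/2]$ of size $k - b$, so it suffices to count the possible sets $A'$. The hypothesis $\big| [\lambda k] \setminus X \big| \ge m(B)$ with $X = A' + A'$ is exactly the hypothesis of Corollary~\ref{cor:prob:sumset} with $m := m(B)$, so the proof reduces to checking that the two requirements of that corollary, namely $m \ge 2^{400} \lambda^{24}$ and $b \le \delta k$, are both satisfied.

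We may assume $\cG(B) \ne \emptyset$, as otherwise the bound is trivial. Fix any $A \in \cG(B)$; then $A \in \cI$, so from Definition~\ref{def:I} we get $b = |B| = b(A) \le \delta k$, which handles the second requirement. For the first requirement, the same $A$ gives $r(B) = r(A) \ge c(\lambda,\eps)$, and since $c(\lambda,\eps) \ge 2^{560} \lambda^{32}$, we obtain
$$m(B) \,=\, \frac{r(B)}{8\lambda} \,\ge\, \frac{c(\lambda,\eps)}{8\lambda} \,\ge\, 2^{557} \lambda^{31} \,\ge\, 2^{400} \lambda^{24},$$
as required.

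It then follows from Corollary~\ref{cor:prob:sumset}, applied with $m := m(B)$, that the number of $(k-b)$-subsets $A' \subset [\lambda k / 2]$ with $\big| [\lambda k] \setminus ( A' + A' ) \big| \ge m(B)$ is at most
$$e^{2 \delta m(B)} \bigg( \frac{\lambda - 2}{\lambda} \bigg)^{m(B)/2} \binom{\lambda k / 2}{k - b},$$
and the claimed bound is an immediate consequence of inequality~\eqref{eq:prob:sumset:weak}.

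There is no real obstacle here: the substantive content sits in Corollary~\ref{cor:prob:sumset} (which itself is where the container theorem enters), and Lemma~\ref{lem:claim1} is simply the packaging of that corollary into the form needed to combine with Lemma~\ref{lem:claim2} (via the observation~\eqref{eq:G:key:obs}) to yield Lemma~\ref{lem:size_of_g}.
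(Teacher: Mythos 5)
Your proposal is correct and follows exactly the paper's own argument: fix $B$, note that $\cG(B) \ne \emptyset$ forces $b \le \delta k$ and $r(B) \ge c(\lambda,\eps)$ so that $m(B) = r(B)/8\lambda \ge 2^{400}\lambda^{24}$, then apply Corollary~\ref{cor:prob:sumset} to the sets $A' \subset [\lambda k/2]$ of size $k-b$ and finish with~\eqref{eq:prob:sumset:weak}. Nothing is missing.
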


\begin{proof}
We want to bound the number of sets $A' \subset [\lambda k / 2]$, with $|A'| = k - b$, such that $|[\lambda k] \setminus (A'+A')| \ge m := m(B)$. Recall that $|B| \le \delta k$ and $r(B) \ge c(\lambda,\eps)$ (otherwise $\cG(B)$ is empty), and note that therefore $m = r(B) / 8\lambda \ge 2^{400} \lambda^{24}$. It follows, by Corollary~\ref{cor:prob:sumset} and~\eqref{eq:prob:sumset:weak}, that there are at most 
$$\bigg( \frac{\lambda - 1}{\lambda} \bigg)^{m/2} {\lambda k/2 \choose k - b}$$
sets $A \in \cG(B)$ such that $|[\lambda k] \setminus (A'+ A')| \ge m$, as claimed. 
\end{proof}

We can now easily deduce the claimed upper bound on the size of $\cG(B)$.

\begin{proof}[Proof of Lemma~\ref{lem:size_of_g}]
By~\eqref{eq:G:key:obs}, $|\cG(B)|$ is at most the sum of the bounds in Lemmas~\ref{lem:claim2} and~\ref{lem:claim1}. Recalling that $m(B) = r(B) / 8\lambda$, this gives 
$$|\cG(B)| \le \big( e^{-m(B)} + e^{-m(B) / 2\lambda} \big) {\lambda k/2 \choose k - b} \le \exp\bigg( - \frac{r(B)}{2^5\lambda^2} \bigg) {\lambda k/2 \choose k - b},$$
as required.
\end{proof}

Lemma~\ref{lem:sparse:quant} is a straightforward consequence.

\begin{proof}[Proof of Lemma~\ref{lem:sparse:quant}]
Fix $b$ and $r$, and consider the sets $B \subset \{-\lambda k / 2, \ldots, \lambda k \} \setminus [\lambda k/2]$ with $|B| = b$ and $r(B) = r$. We may assume that $r > f(\lambda) b$ and $r \ge c(\lambda,\eps)$, since otherwise $\cG(B) \cap \cS = \emptyset$. The number of choices for $B$ (given $b$ and $r$) is therefore at most
$$\binom{r}{b} \le \big( 2^{10} e \lambda^3 \big)^{2^{-10} \lambda^{-3} r} \le \exp\bigg( \frac{r}{2^7 \lambda^2} \bigg)$$
since $r / b > f(\lambda) = 2^{10} \lambda^3$. By Lemma~\ref{lem:size_of_g}, it follows that
$$\big| \big\{ A \in \cS : b(A) = b, \, r(A) = r \big\} \big| \le \exp\bigg( - \frac{r}{2^7 \lambda^2} \bigg) \binom{\lambda k/2}{k-b} \le \exp\bigg( - \frac{r}{2^8 \lambda^2} \bigg) \binom{\lambda k/2}{k},$$
where the second inequality follows from~\eqref{eq:binomial:usual:application}, since $r/b > f(\lambda)$ and $\lambda \ge 3$. 

Summing over choices of $r \geq c(\lambda,\eps)$ and $b < r / f(\lambda)$, it follows that
$$|\cS| \le \sum_{r \geq c(\lambda,\eps)} \frac{r}{f(\lambda)} \exp\bigg( - \frac{r}{2^8 \lambda^2} \bigg) \binom{\lambda k/2}{k} \le \exp\bigg( - \frac{c(\lambda,\eps)}{2^9 \lambda^2} \bigg) \binom{\lambda k/2}{k},$$
as required.
\end{proof}

\section{Counting the moderately dense sets}\label{sec:somewhat:dense}

Recall from Definition~\ref{def:I} and~\eqref{def:b:and:r} the definitions of $b(A)$, $r(A)$ and $\cI$, and let us write
\begin{equation}\label{def:D}
\cD := \Big\{ A \in \cI \,:\, r(A) \le f(\lambda) b(A) \Big\}
\end{equation}
for the family of `dense' sets in $\cI$, where $f(\lambda) = 2^{10} \lambda^3$. In the next two sections we will prove the following quantitative version of Lemma~\ref{lem:dense}. 

\begin{lemma}\label{lem:dense:quant}
Let $\lambda \ge 3$ and $\eps \in (0,1)$, and let $k \in \N$. Then
$$|\cD| \le \exp\bigg( - \frac{c(\lambda,\eps)}{2^{20} \lambda^2} \bigg) \binom{\lambda k/2}{k}.$$
\end{lemma}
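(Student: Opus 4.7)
The plan is to partition $\cD$ according to two statistics of each $A \in \cD$: the number $b := b(A)$ of elements outside $[\lambda k/2]$, and the quantity $\mu$ defined by $\mu b := \bigl| (B + B) \setminus [\lambda k] \bigr|$, where $B := A \setminus [\lambda k/2]$. Writing $A' := A \cap [\lambda k/2]$, I will bound the choices for $A'$ and for $B$ separately in each regime, and then sum the resulting estimates over all admissible values of $(b, r, \mu)$.

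For the count of $A'$ given $B$, I will refine the strategy of Section~\ref{sec:spreadout}. The sets $A' + A'$, $A' + \min(B)$, $A' + \max(B)$ and $B + B$ are pairwise almost-disjoint subsets of $A + A$, so
$$\bigl| [\lambda k] \setminus (A'+A') \bigr| \;\ge\; |Y| + \mu b,$$
with $Y$ defined as in~\eqref{def:Y}. Either $|Y|$ is small, in which case the elementary counting underlying Lemma~\ref{lem:claim2} applies (now with the extra $\mu b$ strengthening the estimate), or $|[\lambda k] \setminus (A'+A')|$ is large and Corollary~\ref{cor:prob:sumset} gives an exponential saving. For the count of $B$, I will drop the trivial binomial used in Section~\ref{sec:spreadout} and invoke Theorem~\ref{thm:M:counting}, exploiting that $B$ has controlled doubling inside an interval of length $O(r) = O(\mu b)$. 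Combined, these two bounds already suffice whenever $\mu$ is far from $\lambda$ or $r$ is much larger than $\mu b$; this is the first half of the argument and occupies the remainder of this section.

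The delicate regime $\mu = \Theta(\lambda)$ and $r = \Theta(\mu b)$ is the subject of Section~\ref{sec:dense} and is the main obstacle. The plan is to apply Corollary~\ref{cor:M:containers} to the set $Y$ of integers close to $\min(A)$ or $\max(A)$. A preliminary step shows that typically the ``missing'' set $M(A) := [\lambda k] \setminus (A + A)$ is entirely contained in $Y + Y$; this is analogous in spirit to Lemma~\ref{lem:middle:covered}, since any missing element deep in the middle of $[\lambda k]$ would force $|A + A|$ to exceed $\lambda k$ given $|A| = k$. Taking $U := A \cap Y$ and $W := M(A)$ in Corollary~\ref{cor:M:containers}$(a)$, the containers $(C, D)$ then satisfy $M(A) \subset C$ and $A \cap Y \subset D$, together with a deficit $|Y| - |D| \gtrsim |C|/2 - O(\gamma|Y|)$. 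To count sets $A$ per container one picks $A \cap Y \subset D$ and extends freely in $[\lambda k/2] \setminus Y$; the deficit translates via~\eqref{eq:binomial:classic:too} into a factor $\bigl((\lambda - 2)/\lambda\bigr)^{|C|/2}$ gained against $\binom{\lambda k/2}{k}$. Combining this with $|C| \gtrsim r/\lambda$ (since $M(A) \supset (B + B) \setminus [\lambda k]$ and $r = \Theta(\mu b)$) and the sub-exponential bound on $|\cB|$, one obtains at most $\exp\bigl(-r/2^{19} \lambda^2\bigr) \binom{\lambda k/2}{k}$ dense sets with $r(A) = r$.

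Summing over $r \ge c(\lambda, \eps)$ and over $b \le \delta k$ then produces the required bound $\exp\bigl(-c(\lambda, \eps)/2^{20} \lambda^2\bigr) \binom{\lambda k/2}{k}$, the extra factor of $2$ in the denominator absorbing the geometric series and the polynomial number of pairs $(r, b)$. The principal technical difficulties I anticipate are: (i) choosing the scale of $Y$ so that both $M(A) \subset Y + Y$ and $|C| \gtrsim r/\lambda$ hold; (ii) applying Corollary~\ref{cor:M:containers} with $\gamma$ small enough for the deficit to yield a useful saving but large enough that the container count $\exp\bigl(2^{17} \gamma^{-2} \sqrt{|Y|} (\log|Y|)^{3/2}\bigr)$ is absorbed; and (iii) converting the deficit on $A \cap Y$ into a clean exponential saving in the binomial count for $A'$ without losing the $\lambda^{-2}$ factor in the final exponent.
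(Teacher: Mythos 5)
Your overall architecture is the same as the paper's (partition by $b$, $\mu$ and $r$; count $B$ via Theorem~\ref{thm:M:counting}; containers for $M(A)$ and $A \cap Y$ in the critical regime), but the accounting in the critical regime $\mu = \Theta(\lambda)$, $r = O(\mu b)$ does not close as described. Per container you propose to gain a factor $\bigl((\lambda-2)/\lambda\bigr)^{|C|/2}$ from the deficit $|D| \le |Y| - |C|/2 + O(\gamma|Y|)$, together with $|C| \ge \mu b$ (incidentally, the justification ``$M(A) \supset (B+B)\setminus[\lambda k]$'' cannot hold, since $M(A) \subset [\lambda k]$ while $(B+B)\setminus[\lambda k]$ is disjoint from $[\lambda k]$; the correct statement is $|M(A)| \ge \mu b$ because $|A+A| \le \lambda k$). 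But each set $A$ in a container must also pay for the choice of $B = A \setminus [\lambda k/2]$: by the analogue of Lemma~\ref{lem:count:Bs} this costs roughly $\bigl(\tfrac{\mu-2}{2}\bigr)^{b}\bigl(\tfrac{\mu}{\mu-2}\bigr)^{\mu b/2}$, and the factor $\bigl(\tfrac{2}{\lambda-2}\bigr)^{b}$ coming from $\binom{\lambda k/2}{k-b}$ versus $\binom{\lambda k/2}{k}$ only cancels the first term. For $\mu$ comparable to $\lambda$ the remaining $\bigl(\tfrac{\lambda}{\lambda-2}\bigr)^{\mu b/2}$ is cancelled \emph{exactly}, with no room to spare, by your gain $\bigl(\tfrac{\lambda-2}{\lambda}\bigr)^{\mu b/2}$: nothing is left to absorb the $\exp\bigl(2^{50}\lambda^2 b^{7/8}\bigr)$ containers, the sum over $(b,\mu,r)$, or to produce $\exp(-r/2^{19}\lambda^2)$. (If instead you mean to choose $B$ as part of $A \cap Y \subset D$ and ``extend freely'', the loss is worse: the ground set then has $\Theta(\lambda^2 b)$ extra points outside $[\lambda k/2]$, an overcount of order $e^{\Theta(\lambda b)}$ which the deficit, of order $e^{-O(b)}$, cannot beat.) The paper's additional saving comes from a finer use of the container: setting $\tilde{C} := C - \{\min(B),\max(B)\}$ and $S := \tilde{C} \cap A'$, one has $|C| \ge \mu b + |S|$, and one splits according to $|S|$ and $|T| := |\tilde{C} \cap \tilde{D}|$, using the \emph{full} quantity $|C|$ (rather than $|C|/2$) when $S$ and $T$ are small; this is what yields the $e^{-b/32\lambda}$ per container in Lemma~\ref{lem:finalcount}. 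Some mechanism of this kind is absent from your outline, and without it the union bound fails.

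A secondary gap is in your ``easy cases'': the inequality $|[\lambda k]\setminus(A'+A')| \ge |Y| + \mu b$ is not justified, because $Y \subset (A'+\{\min(B),\max(B)\})\setminus[\lambda k]$ can overlap $(B+B)\setminus[\lambda k]$ substantially, so the sets are not ``pairwise almost-disjoint''. The paper handles this by a case split (Lemma~\ref{lem:r:vs:b}): if $|Y \cap (B+B)| \le |Y|/2$ one still gets $|[\lambda k]\setminus(A'+A')| \ge \mu b + |Y|/2$ and applies Corollary~\ref{cor:prob:sumset}; if the overlap is large, then many elements of $A'$ are confined to a set of size at most $\mu b$ determined by $B$ (translate by $-\min(B)$ or $-\max(B)$), and a direct binomial estimate gives the saving. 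This part is fixable along those lines, but as written the inequality you rely on is false in general.
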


Let us fix $\lambda \ge 3$, $\eps \in (0,1)$ and $k \in \N$ until the end of the proof of Lemma~\ref{lem:dense:quant}. In this section, we will deal with some relatively easy cases using the method of the previous section. Observe that 
\begin{equation}\label{eq:b:lower:bound}
b(A) \ge \frac{c(\lambda,\eps)}{f(\lambda)} \ge 2^{550} \lambda^{29}
\end{equation}
for every $A \in \cD$, since $r(A) \ge c(\lambda,\eps)$ for every $A \in \cI$, and by the definition~\eqref{def:c} of $c(\lambda,\eps)$. 

For convenience, let us define, for each $b \in \N$ and $\mu \ge 1$, 
\begin{equation}\label{def:Dbmu}
\cD(b,\mu) := \Big\{ A \in \cD \,:\, |B| = b \,\text{ and }\, |(B+B) \setminus [\lambda k]| = \mu b, \text{ where } B = A \setminus [\lambda k / 2] \Big\}.
\end{equation}
The first step is to use Theorem~\ref{thm:M:counting} to bound the number of choices for $B = A \setminus [\lambda k / 2]$. We will use the following lemma several times in the proof of Lemma~\ref{lem:dense:quant}. 

\begin{lemma}\label{lem:count:Bs}
Let $b \in \N$ and $\mu > 2$. There are at most
\begin{equation}\label{lem:eq:choices:B}
e^{2\delta b} \bigg( \frac{\mu - 2}{2} \bigg)^{b} \bigg( \frac{\mu}{\mu - 2} \bigg)^{\mu b / 2}
\end{equation}
sets $B$ such that $B = A \setminus [\lambda k / 2]$ for some $A \in \cD(b,\mu)$. 
\end{lemma}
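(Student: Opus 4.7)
The plan is to split $B$ according to which side of $[\lambda k / 2]$ its elements lie on, apply Theorem~\ref{thm:M:counting} to each half, and combine the resulting estimates through a Vandermonde-type inequality and Observation~\ref{obs:binomial}.

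Let $B_1 := B \cap \{-\lambda k/2, \ldots, 0\}$ and $B_2 := B \cap \{\lambda k/2 + 1, \ldots, \lambda k\}$, both non-empty by the definition of $\cI$. Since the three sumsets $B_1 + B_1 \subset [-\lambda k, 0]$, $B_1 + B_2 \subset [1, \lambda k]$, and $B_2 + B_2 \subset [\lambda k + 2, 2\lambda k]$ lie in pairwise disjoint ranges,
$$\mu b \,=\, \bigl| (B + B) \setminus [\lambda k] \bigr| \,=\, |B_1 + B_1| + |B_2 + B_2|.$$
Writing $b_i := |B_i|$ and $\mu_i := |B_i + B_i|/b_i$, I have $b_1 + b_2 = b$ and $\mu_1 b_1 + \mu_2 b_2 = \mu b$. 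Since $A \in \cD$, I also have $r(A) \le f(\lambda) b$, which confines each $B_i$ to an interval of length at most $f(\lambda) b + 1$, and therefore $\mu \le 2 f(\lambda) + 1 = O(\lambda^3)$. I will stratify the count over the decomposition tuple $(b_1, b_2, \mu_1, \mu_2)$, of which there are only $(\mu b)^{O(1)}$ admissible ones.

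In the generic regime (where the hypotheses of Theorem~\ref{thm:M:counting} hold for each $B_i$, which will follow from $b \ge 2^{550}\lambda^{29}$ via~\eqref{eq:b:lower:bound}, the bound on $\mu$, and $k \ge (\log n)^4$), translating $B_1$ into $[\lambda k / 2]$ and applying the theorem gives
$$\exp\!\Big( 2^9 \mu_i^{1/6} b_i^{5/6} \log b_i \sqrt{\log(\lambda k)} \Big) \binom{\mu_i b_i / 2}{b_i} \,\le\, e^{\delta b_i / 2} \binom{\mu_i b_i / 2}{b_i}$$
choices for each $B_i$. Multiplying the two bounds and applying the Vandermonde-type inequality $\binom{\mu_1 b_1/2}{b_1}\binom{\mu_2 b_2/2}{b_2} \le \binom{\mu b/2}{b}$ yields $e^{\delta b} \binom{\mu b/2}{b}$ per tuple; summing over tuples costs a further $e^{\delta b}$ factor. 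Finally, to match the claimed form, Observation~\ref{obs:binomial} with $c = \mu / 2$ and $a = b$ gives the elementary rearrangement
$$\binom{\mu b/2}{b} \,\le\, \bigg(\frac{\mu}{\mu - 2}\bigg)^{\!\mu b / 2} \bigg(\frac{\mu - 2}{2}\bigg)^{\!b}.$$

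The main obstacle is handling the edge cases that fall outside Theorem~\ref{thm:M:counting}'s range. When some $\mu_i \le 2$, Fre\u{\i}man's $3k - 4$ theorem (Lemma~\ref{lem:F3k4}) forces $B_i$ to be an arithmetic progression of length exactly $b_i$, and there are only $O((\lambda k)^2 / b_i)$ such progressions; this polynomial factor is absorbed into $e^{\delta b}$ since $\delta b \gg \log(\lambda k)$ by~\eqref{eq:b:lower:bound}. When $b_i$ is too small for Theorem~\ref{thm:M:counting}'s polylogarithmic threshold, the trivial bound $\binom{\lambda k / 2}{b_i} \le (\lambda k)^{b_i}$ suffices and is absorbed by the same argument, after which the generic estimate handles the other (necessarily large) $B_{3 - i}$. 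In every subcase, the resulting bound on the number of $B$'s fits within the stated form.
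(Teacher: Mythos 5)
Your overall strategy matches the paper's: split $B$ into the part $B_1$ lying at or below $0$ and the part $B_2$ lying above $\lambda k/2$, apply Theorem~\ref{thm:M:counting} to each piece, dispose of the cases $\mu_i \le 2$ via Lemma~\ref{lem:F3k4} and of very small $B_i$ by a trivial count, and recombine using log-concavity of the bound. However, there is a genuine gap in how you absorb the error terms. You apply Theorem~\ref{thm:M:counting} after translating $B_i$ into $[\lambda k/2]$, so every factor you need to swallow is measured against $\log(\lambda k)$, and you justify swallowing it by asserting $\delta b \gg \log(\lambda k)$ via~\eqref{eq:b:lower:bound}. But~\eqref{eq:b:lower:bound} only gives $b \ge 2^{550}\lambda^{29}$, a constant depending on $\lambda$ and $\eps$ alone, while $k$ (and $n$) may be arbitrarily large; the regime $b = \Theta_{\lambda,\eps}(1)$ with $k \to \infty$ is not an edge case but the central one for Theorem~\ref{thm:structure}, since $c(\lambda,\eps)$ is a constant and hence $r(A)$, and so $b$, need only exceed a constant. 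In that regime your claimed inequality $\exp\big(2^9\mu_i^{1/6}b_i^{5/6}\log b_i\sqrt{\log(\lambda k)}\big) \le e^{\delta b_i/2}$ is false, the hypothesis $\mu_i < 2^{-36} b_i/(\log n')^3$ of Theorem~\ref{thm:M:counting} fails for ambient size $n' = \lambda k/2$, and the factors $(\lambda k)^{O(1)}$ (for the progressions in the $\mu_i \le 2$ case) and $(\lambda k)^{b_i}$ (for small $b_i$) cannot be absorbed into $e^{\delta b}$.

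The repair is the observation you already made but did not exploit: since $r(A) \le f(\lambda) b$, and $\min(A) \le 0 < \lambda k/2 < \max(A)$, each $B_i$ lies in a \emph{fixed} window of length at most $f(\lambda)b+1$, anchored at $0$, respectively at $\lambda k/2$. Applying Theorem~\ref{thm:M:counting} inside that window replaces every occurrence of $\log(\lambda k)$ by $\log(f(\lambda)b)$, after which all error terms are subexponential in $b$ and are genuinely absorbed by $e^{\delta b}$ using $b \ge 2^{550}\lambda^{29}$; likewise the count of progressions in the $\mu_i \le 2$ case becomes $O(\lambda^9 b^3)$, and the small-$B_i$ case is handled with the threshold $b_i \le b^{3/4}$ and the bound $\binom{f(\lambda)b}{b^{3/4}} \le e^{\delta b}$. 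One must still verify the hypothesis of Theorem~\ref{thm:M:counting} in this window, namely $\mu_i \le 2f(\lambda)b/b_i \le 2^{-36} b_i/(\log(f(\lambda)b))^3$, and this is exactly where the threshold $b_i \ge b^{3/4}$ is used. This is the route taken in the paper; your recombination via a Vandermonde-type inequality followed by Observation~\ref{obs:binomial} is a harmless variant of the paper's use of Observations~\ref{obs:binomial} and~\ref{obs:log_concavity} applied to each factor separately.
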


We will use the following observation in the proof of Lemma~\ref{lem:count:Bs}, and then again (several times) in the applications below. 

\begin{obs}\label{obs:log_concavity}
$$(x - 2) \cdot \left( \frac{x}{x - 2} \right)^{x/2} \le (y - 2) \left( \frac{y}{y - 2} \right)^{x/2}$$
for every $x,y > 2$.
\end{obs}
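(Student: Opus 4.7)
\medskip

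\noindent\textbf{Proof plan.} Fix $x > 2$ and regard the right-hand side as a function of $y$ alone: define
\[
g(t) \,:=\, (t - 2) \left( \frac{t}{t - 2} \right)^{x/2}
\]
for $t > 2$. The inequality to be proved asserts exactly that $g(x) \le g(y)$ for every $y > 2$, i.e.\ that $g$ attains its minimum on $(2, \infty)$ at $t = x$. I would establish this by a one-line calculus argument.

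Taking logarithms,
\[
\log g(t) \,=\, \Big( 1 - \tfrac{x}{2} \Big) \log(t - 2) \,+\, \tfrac{x}{2} \log t,
\]
so differentiating gives
\[
\frac{g'(t)}{g(t)} \,=\, \frac{1 - x/2}{t - 2} \,+\, \frac{x/2}{t} \,=\, \frac{t - x}{t(t-2)}.
\]
Since the denominator is positive for $t > 2$, the derivative is negative on $(2, x)$ and positive on $(x, \infty)$, so $g$ is strictly decreasing on $(2, x]$ and strictly increasing on $[x, \infty)$. Consequently $g(x) \le g(y)$ for every $y > 2$, which is the claimed inequality.

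There is no real obstacle here: the only thing to notice is that the exponent $x/2$ on both sides is the same constant (not varying with $y$), which is what makes the single-variable calculus argument go through cleanly. No additional results from the paper are needed.
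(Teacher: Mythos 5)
Your proof is correct, but it takes a different route from the paper's. You fix $x$, view the right-hand side as a function $g(y) = (y-2)\left(\frac{y}{y-2}\right)^{x/2}$, and show by a derivative-sign computation, $\frac{g'(t)}{g(t)} = \frac{t-x}{t(t-2)}$, that $g$ is minimized on $(2,\infty)$ exactly at $t = x$; the inequality is then the statement $g(x) \le g(y)$. The paper instead forms the ratio of the two sides, $q(x,y) = \left(\frac{x}{y}\right)^{x/2}\left(\frac{y-2}{x-2}\right)^{(x-2)/2}$, and shows $q(x,y) \le 1$ in one display by applying Jensen's inequality (concavity of $\log$) with weights $\tfrac{2}{x}$ and $\tfrac{x-2}{x}$, the weighted sum inside the logarithm collapsing to $\tfrac{2}{y} + \tfrac{y-2}{y} = 1$. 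Both arguments are elementary and complete; yours makes the extremal role of $y = x$ transparent and explains \emph{why} the inequality holds (the right-hand side is minimized when $y=x$), while the paper's is calculus-free and fits in a single line via a weighted AM--GM-type estimate. Your observation that the exponent $x/2$ is the same fixed constant on both sides is indeed the point that makes the one-variable reduction work.
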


\begin{proof}
Set $q(x,y) := (x/y)^{x/2} \cdot \big( (y-2) / (x-2) \big)^{(x-2)/2}$, and observe that 
$$\log\big( q(x,y)^{2/x} \big) = \frac{2}{x} \cdot \log \frac{x}{y} + \frac{x-2}{x} \cdot \log \bigg( \frac{x (y-2)}{y (x-2)} \bigg) \le \log \bigg( \frac{2}{x} \cdot \frac{x}{y} + \frac{x-2}{x} \cdot \frac{x (y-2)}{y (x-2)} \bigg) = 0,$$
using the concavity of the $\log$ function.
\end{proof}

\begin{proof}[Proof of Lemma~\ref{lem:count:Bs}]
Set $B_1 := \{x \in B : x \le 0 \}$ and $B_2 := \{x \in B : x > \lambda k / 2 \}$, and recall from~\eqref{eq:b:lower:bound} that $b \ge 2^{550} \lambda^{29}$, and that $\delta = 2^{-32} \lambda^{-3}$. Observe first that, since $r(A) \le f(\lambda) b$ for each $A \in \cD(b,\mu)$, for each $i \in \{1,2\}$ there are at most
\begin{equation}\label{eq:choices:B:small}
{f(\lambda) b \choose b^{3/4}} \le \exp \big( b^{3/4} \log b \big) \le e^{\delta b}
\end{equation}
choices for the set $B_i$ with $|B_i| \le b^{3/4}$. Moreover, by Lemma~\ref{lem:F3k4}, if $|B_i + B_i| \le 2|B_i|$, then $B_i$ is contained in an arithmetic progression of size $|B_i| + 1$, and so in this case there are at most $r^3 \le 2^{30} \lambda^9 b^3 \le e^{\delta b}$ choices for $B_i$. Note that $\tfrac{\mu - 2}{2} \big( \tfrac{\mu}{\mu - 2} \big)^{\mu / 2} \ge 1$ for every $\mu > 2$, so these bounds suffice when either $|B_i| \le b^{3/4}$ or $|B_i + B_i| \le 2|B_i|$.

Now, set $b_i = |B_i|$ and $\mu_i b_i = |B_i + B_i|$, and suppose that $b_i \ge b^{3/4}$, and $\mu_i > 2$. In this case we will use Theorem \ref{thm:M:counting} to count the number of choices for $B_i$. To check the condition on $\mu_i$, observe that $B_i + B_i \subset [2\min(B),2\max(B)] \setminus [\lambda k]$, and therefore
\begin{equation}\label{eq:mui:bound}
\mu_i b_i \le 2 \cdot r(A) \le 2f(\lambda) b,
\end{equation} 
for every $A \in \cD(b,\mu)$, by~\eqref{def:b:and:r} and~\eqref{def:D}. Since $b_i \ge b^{3/4}$, and recalling that $b \ge 2^{550} \lambda^{29}$, it follows that\footnote{Using the bound $b_i \ge b^{3/4}$, the second inequality reduces to $b \ge 2^{74} f(\lambda)^2 \big( \log (f(\lambda) b) \big)^{6}$, which follows (with room to spare) from $b \ge 2^{550} \lambda^{29}$, since $f(\lambda) = 2^{10} \lambda^3$.}
$$\mu_i \le \frac{2f(\lambda) b}{b_i} \le 2^{-36} \frac{b_i}{\big( \log (f(\lambda)b) \big)^{3}}.$$
Hence, by Theorem \ref{thm:M:counting}, the number of choices for $B_i$ (given $b_i$ and $\mu_i$) is at most
\begin{equation}\label{eq:choices_b1}
\exp\Big(2^9 \mu_i^{1/6} b_i^{5/6} \log b_i \sqrt{\log (f(\lambda)b)}\Big) \binom{\mu_i b_i / 2}{b_i} \le e^{\delta b} \binom{\mu_i b_i / 2}{b_i},
\end{equation}
where the final inequality holds since $\mu_i^{1/6} b_i^{5/6} \le (\mu_i b_i)^{1/6} b^{3/4} \le 4\lambda \cdot b^{5/6}$, by~\eqref{eq:mui:bound}, so 
$$2^9 \mu_i^{1/6} b_i^{5/6} \log b_i \sqrt{\log (f(\lambda)b)} \le 2^{11} \lambda \cdot b^{5/6} (\log b)^2 \le \delta b,$$ 
since $\delta = 2^{-32} \lambda^{-3}$ and $b \ge 2^{550} \lambda^{29}$. 

Now, by Observations~\ref{obs:binomial} and~\ref{obs:log_concavity}, it follows that 
\begin{equation}\label{eq:choices_b2}
\binom{\mu_i b_i / 2}{b_i} \le \left( \frac{\mu_i - 2}{2} \cdot \left( \frac{\mu_i}{\mu_i - 2} \right)^{\mu_i/2} \right)^{b_i} \le \bigg( \frac{\mu - 2}{2} \bigg)^{b_i} \bigg( \frac{\mu}{\mu - 2} \bigg)^{\mu_i b_i / 2}.
\end{equation}
Since $\mu b = \mu_1 b_1 + \mu_2 b_2$, the lemma follows from~\eqref{eq:choices:B:small},~\eqref{eq:choices_b1} and~\eqref{eq:choices_b2}.
\end{proof}




We can now bound the number of sets $A \in \cD(b,\mu)$ such that $r(A) \ge 2^{11} \mu b$. 

\begin{lemma}\label{lem:r:vs:b}
Let $b \in \N$ and $\mu \ge 1$. If $r \ge 2^{11} \mu b$, then there are at most 
$$\exp\bigg( - \frac{r}{2^{7} \lambda^2} \bigg) \binom{\lambda k/2}{k}$$ 
sets $A \in \cD(b,\mu)$ with $r(A) = r$.
\end{lemma}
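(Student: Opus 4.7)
\noindent
The plan is to refine the strategy of Lemma~\ref{lem:sparse:quant}: instead of the trivial bound $\binom{r}{b}$ on the outer set $B := A \setminus [\lambda k/2]$, I will use Lemma~\ref{lem:count:Bs} (plus Lemma~\ref{lem:F3k4} when $\mu \le 2$), and I will sharpen the bound on the inner set $A' := A \cap [\lambda k/2]$ by exploiting the $\mu b$ elements of $(B+B) \setminus [\lambda k]$.

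The key structural observation is that $A' + A'$, the set $Y$ from~\eqref{def:Y}, and $(B+B) \setminus [\lambda k]$ are three pairwise disjoint subsets of $A + A$: the first is contained in $[\lambda k]$ while the other two lie outside, and $Y \cap (B+B) = \emptyset$ because each $y \in Y$ has the form $a' + b$ with $a' \in A'$ disjoint from $B$. Combined with $|A+A| \le \lambda k$, this gives $|[\lambda k] \setminus (A'+A')| \ge |Y| + \mu b$. I count $A'$ by splitting on $|Y| \le m(B) := r/(8\lambda)$ or $|Y| > m(B)$: the first case uses the combinatorial count of Lemma~\ref{lem:claim2} to give $e^{-m(B)}\binom{\lambda k/2}{k-b}$, and the second case feeds $m = m(B) + \mu b$ (which sits well above $2^{400}\lambda^{24}$) into Corollary~\ref{cor:prob:sumset}; using the full bound $e^{2\delta m}\big(\tfrac{\lambda-2}{\lambda}\big)^{m/2} \le e^{-m/\lambda}$, this contributes at most $e^{-r/(8\lambda^2) - \mu b/\lambda}\binom{\lambda k/2}{k-b}$. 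Since $r \ge 2^{11}\mu b$, the second case dominates, so the total $A'$-count per $B$ is at most $2\,e^{-r/(8\lambda^2) - \mu b/\lambda}\binom{\lambda k/2}{k-b}$.

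Multiplying this by the $B$-count from Lemma~\ref{lem:count:Bs} (or by the $\le e^{\delta b}$ Freiman bound for $\mu \le 2$), applying $\binom{\lambda k/2}{k-b} \le (2/(\lambda-2))^b\binom{\lambda k/2}{k}$, and invoking Observation~\ref{obs:log_concavity} with $x = \mu$, $y = \lambda$ to absorb $\big(\tfrac{\mu-2}{\lambda-2}\big)^b\big(\tfrac{\mu}{\mu-2}\big)^{\mu b/2} \le e^{\mu b/(\lambda-2)}$, the desired bound reduces to the elementary inequality
\[
\frac{2 \mu}{\lambda(\lambda - 2)} + o(1) \;\le\; \frac{15\, r}{128 \, \lambda^2 \, b},
\]
which follows from $r \ge 2^{11}\mu b$ and $2/(\lambda - 2) \le 240/\lambda$ (valid for $\lambda \ge 3$); the lower-order $o(1)$ terms are easily absorbed thanks to $b \ge 2^{550}\lambda^{29}$ from~\eqref{eq:b:lower:bound}. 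The main obstacle is obtaining the additive $\mu b/\lambda$ gain in the second-case bound for $A'$: without it the inequality collapses for $\mu$ close to $\lambda$ when $\lambda$ is large, so both the disjointness observation and the strong form of Corollary~\ref{cor:prob:sumset} (rather than the weaker form $((\lambda-1)/\lambda)^{m/2}$ used in the proof of Lemma~\ref{lem:size_of_g}) are essential.
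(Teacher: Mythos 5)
There is a genuine gap: your ``key structural observation'' that $Y$ and $(B+B)\setminus[\lambda k]$ are disjoint is false, and by your own admission it is load-bearing. The fact that $A'$ and $B$ are disjoint as sets says nothing about the sumsets: a single integer $y\le 0$ can simultaneously be written as $a'+\min(B)$ with $a'\in A'$ and as $b_1+b_2$ with $b_1,b_2\in B\cap(-\infty,0]$, and likewise an element $y>\lambda k$ can be of the form $a'+\max(B)$ and also a sum of two elements of $B$ exceeding $\lambda k/2$. So representations are not unique and $Y\cap(B+B)$ can be large. All one gets for free is $|[\lambda k]\setminus(A'+A')|\ \ge\ |Y|+\mu b-|Y\cap(B+B)|$, which can degenerate to roughly $\max\{|Y|,\mu b\}$. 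As you note yourself, without the additive $\mu b$ gain in the exponent the factor $(\lambda/(\lambda-2))^{\mu b/2}$ coming from the count of sets $B$ (Lemma~\ref{lem:count:Bs}) cannot be absorbed when $\mu=\Theta(\lambda)$ and $\lambda$ is large, so your argument collapses exactly in the critical regime.

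The paper repairs this with an extra case split that your proposal is missing. When $|Y|\ge m(B)$ it distinguishes whether $|Y\cap(B+B)|\le m(B)/2$ or $|Y\cap(B+B)|>m(B)/2$. In the first subcase the union bound does give $|[\lambda k]\setminus(A'+A')|\ge \mu b+m(B)/2$, and the computation via Corollary~\ref{cor:prob:sumset} goes through much as in your ``second case'' (with $m(B)/2$ in place of $m(B)$). In the second subcase the sumset-deficit route is abandoned: every $y\in Y\cap(B+B)$ forces $y-\min(B)$ or $y-\max(B)$ to lie in $A'$, so $A'$ must contain more than $m(B)/2$ elements of an explicit set $Z$ with $|Z|\le \mu b\le 2^{-8}\lambda\, m(B)$, and a direct binomial count then yields a factor of order $2^{-m(B)}$, which beats the $B$-count. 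Without this third case, or some substitute controlling the overlap $Y\cap(B+B)$, the lemma is not proved. (A minor further slip: for $\mu\le 2$ the Fre\u{\i}man bound gives about $r^6$ choices for $B$, not $e^{\delta b}$ --- in this lemma $r$ may be far larger than $b$ --- but that polynomial factor is easily absorbed by the exponential decay in $r$, as in the paper.)
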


\begin{proof}
Observe first that if $\mu \le 2$, then $B$ is contained in two arithmetic progressions of combined size at most $|B| + 2$, by Lemma~\ref{lem:F3k4} (cf.~the proof of Lemma~\ref{lem:count:Bs}), and so in this case there are at most $r^6$ choices for~$B$. By Lemma~\ref{lem:size_of_g} and~\eqref{eq:binomial:usual:application}, it follows that there are at most
\begin{equation}\label{eq:rvsb:mu:small}
\sum_{B} |\cG(B)| \le r^6 \exp\bigg( - \frac{r}{2^6 \lambda^2} \bigg) \bigg( \frac{2}{\lambda - 2} \bigg)^b {\lambda k/2 \choose k}
\end{equation} 
sets $A \in \cD(b,\mu)$ with $r(A) = r$, where the sum is over sets with $|(B+B) \setminus [\lambda k]| \le 2|B| = 2b$ and $r(B) = r$. Now, since $r \ge 2^{11} b \ge 2^{561} \lambda^{29}$, by~\eqref{eq:b:lower:bound}, and $\lambda \ge 3$, we have\footnote{Indeed, if $\lambda \ge 4$ then note that $r^6 \le \exp\big( r / 2^7 \lambda^2 \big)$, and if $\lambda \le 4$ then note that $r^6 2^{r / 2^{11}} \le \exp\big( r / 2^{11} \big)$.} 
$$r^6 \bigg( \frac{2}{\lambda - 2} \bigg)^b \le \exp\bigg( \frac{r}{2^7 \lambda^2} \bigg),$$
and combining this with~\eqref{eq:rvsb:mu:small}, we obtain the claimed bound. 

Let us therefore assume from now on that $\mu > 2$. By Lemma \ref{lem:count:Bs} and Observation~\ref{obs:log_concavity}, it follows that there are at most 
\begin{equation}\label{eq:choices:B}
e^{2\delta b} \bigg( \frac{\lambda - 2}{2} \bigg)^{b} \bigg( \frac{\lambda}{\lambda - 2} \bigg)^{\mu b / 2}
\end{equation}
sets $B$ such that $B = A \setminus [\lambda k / 2]$ for some $A \in \cD(b,\mu)$. In order to count the sets $A$ for a given $B$, we will need to consider three cases. For each set $B$ that is counted in~\eqref{eq:choices:B}, set $m := m(B) = r(B) / 8\lambda$, and for each $A \in \cG(B)$, let $Y = Y(A)$ be the set defined in~\eqref{def:Y}.

\medskip
\noindent \textbf{Case 1:} $|Y| \le m$.
\medskip

By Lemma~\ref{lem:claim2} and~\eqref{eq:binomial:usual:application}, for each set $B$ there are at most 
$$e^{-m} {\lambda k/2 \choose k - b} \le e^{-m} \left( \frac{2}{\lambda - 2}\right)^b {\lambda k / 2 \choose k}$$
sets $A \in \cG(B)$ with $|Y| \le m$. Summing over sets $B$ as in~\eqref{eq:choices:B}, noting that if $r(B) = r$ then $\mu b \le 2^{-11} r \le 2^{-8} \lambda m$, and recalling that $\lambda \ge 3$, it follows that there are at most\footnote{Indeed, if $\lambda \le 4$ then $3^{\mu b} \le 3^{m/2^6} \le e^{m/2^5}$, and otherwise $\lambda / (\lambda - 2) \le e^{2/(\lambda-2)} \le e^{4/\lambda}$.} 
\begin{equation}\label{eq:r:vs:b:case1}
e^{2\delta b} \left(\frac{\lambda}{\lambda-2}\right)^{\mu b / 2} e^{-m} \binom{\lambda k/2}{k}\leq e^{-m/2} \binom{\lambda k/2}{k}
\end{equation}
sets $A \in \cD(b,\mu)$ with $r(A) = r$ and $|Y| \le m$. 


Counting the sets with larger $Y$ is somewhat more delicate, and we will need to partition into two cases, depending on the intersection of $Y$ with the set $B+B$. 

\medskip
\noindent \textbf{Case 2:} $|Y| \ge m$ and $|Y \cap (B+B)| \le m/2$. 
\medskip

In this case we will apply Corollary~\ref{cor:prob:sumset}. To do so, observe first that 
$$|A'+ A'| + |Y \cup (B+B) \setminus [\lambda k]| \le |A+A| \le \lambda k,$$
since $A' + A' \subset [\lambda k]$ and $Y \subset (A' + B) \setminus [\lambda k]$, by~\eqref{def:Y}. Recall that $|(B+B) \setminus [\lambda k]| = \mu b$ for each $A \in \cD(b,\mu)$, by~\eqref{def:Dbmu}.  Therefore, if $|Y| \ge m$ and $|Y \cap (B+B)| \le m/2$, then 
$$|[\lambda k] \setminus (A' + A')| \ge \mu b+m/2.$$ 
Moreover, if $\cD(b,\mu)$ is non-empty then $2^{400} \lambda^{24} \le b \le \delta k$, by~\eqref{eq:b:lower:bound} and Definition~\ref{def:I}, and since $\cD(b,\mu) \subset \cD \subset \cI$. 
Hence, by Corollary~\ref{cor:prob:sumset} and~\eqref{eq:binomial:usual:application}, it follows that for each set $B$ counted in~\eqref{eq:choices:B}, 
there are at most 
$$\exp\big( 2\delta \cdot (\mu b  + m/2) \big) \left(\frac{\lambda - 2}{\lambda}\right)^{\mu b / 2  + m / 4} \left( \frac{2}{\lambda - 2}\right)^b {\lambda k / 2 \choose k}$$
sets $A \in \cG(B)$ such that $|Y| \ge m$ and $|Y \cap (B+B)| \le m/2$. 

Summing over sets $B$, and using~\eqref{eq:choices:B}, it follows that there are at most
$$\exp\big( 2\delta \cdot (b + \mu b  + m/2) \big) \left(\frac{\lambda-2}{\lambda}\right)^{m/4} \binom{\lambda k/2}{k}$$
choices for $A$ in this case. Now, since $\mu b \le 2^{-8} \lambda m$ and $\delta = 2^{-32} \lambda^{-3}$, we have 
$$\exp\big( 2\delta \cdot (b + \mu b  + m/2) \big) \left(\frac{\lambda-2}{\lambda}\right)^{m/4} \le \exp\bigg( \delta \lambda m - \frac{m}{2\lambda} \bigg) \le \exp\bigg( - \frac{m}{4\lambda} \bigg),$$
and hence the number of sets $A$ with $|Y| \ge m$ and $|Y \cap (B+B)| \le m/2$ is at most
\begin{equation}\label{eq:r:vs:b:case2}
\exp\bigg( - \frac{m}{4\lambda} \bigg) \binom{\lambda k/2}{k} = \exp\bigg( - \frac{r}{2^5 \lambda^2} \bigg) \binom{\lambda k/2}{k}.
\end{equation}

\medskip

Finally, we count sets such that $Y$ has large intersection with $B+B$.  

\pagebreak
\noindent \textbf{Case 3:} $|Y| \ge m$ and $|Y \cap (B+B)| > m/2$.
\medskip

Let $B$ be such that $B = A \setminus [\lambda k / 2]$ for some $A \in \cD(b,\mu)$, and consider the set 
$$Z := \big\{ x \in [\lambda k/2] : x + \min(B) \in (B+B) \setminus [\lambda k] \;\text{ or }\; x + \max(B) \in (B+B) \setminus [\lambda k] \big\}.$$ 
Observe that $|A' \cap Z| > m/2$ and $|Z| \le |(B+B) \setminus [\lambda k]|$. It follows that the number of choices for $A'$ is at most 
$$\sum_{\ell > m/2} \binom{\mu b}{\ell}\binom{\lambda k / 2}{k - b - \ell} \le \sum_{\ell > m/2} \left( \frac{e\mu b}{\ell} \cdot \frac{2}{\lambda-2} \right)^\ell {\lambda k / 2 \choose k - b} \le 2^{-m} \left( \frac{2}{\lambda - 2}\right)^b {\lambda k / 2 \choose k},$$ 
where the inequalities follow from~\eqref{eq:binomial:usual:application} and the bounds $\mu b \le 2^{-8} \lambda m$ and $\lambda \ge 3$, which together imply that
$$\frac{2e\mu b}{m} \cdot \frac{2}{\lambda-2} \le \frac{e \lambda}{2^5(\lambda-2)} \le \frac{1}{4}.$$ 
By~\eqref{eq:choices:B}, and recalling again that $\mu b \le 2^{-8} \lambda m$, it follows that there are at most
\begin{equation}\label{eq:r:vs:b:case3}
e^{2\delta b} \left( \frac{\lambda}{\lambda-2} \right)^{\mu b / 2} 2^{-m} \binom{\lambda k/2}{k} \leq 2^{-m/2} \binom{\lambda k/2}{k} \le \exp\bigg( - \frac{r}{2^5 \lambda} \bigg) \binom{\lambda k/2}{k}
\end{equation}
choices for $A$ in this case. Summing~\eqref{eq:r:vs:b:case1},~\eqref{eq:r:vs:b:case2} and~\eqref{eq:r:vs:b:case3} gives the required bound on the number of sets $A \in \cD(b,\mu)$ with $r(A) = r$.
\end{proof}

It will be useful in the next section (which deals with the case $r \le 2^{11} \mu b$) to be able to assume that $\mu = \Theta(\lambda)$. The next lemma, which follows from Corollary~\ref{cor:prob:sumset}, provides a suitable bound on the size of $\cD(b,\mu)$ when this is not the case.   

\begin{lemma}\label{lem:big:or:small:sumset}
Let $b \in \N$. If $r \le 2^{11} \mu b$ and either $\mu \le 2$ or $\mu \not\in (\lambda / 2,2\lambda)$, then there are at most 
$$\exp\bigg( - \frac{r}{2^{16} \lambda} \bigg) \binom{\lambda k/2}{k}$$ 
sets $A \in \cD(b,\mu)$ with $r(A) = r$.
\end{lemma}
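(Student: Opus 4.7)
The plan is to bound, for fixed $b, \mu, r$, the number of sets $A \in \cD(b,\mu)$ with $r(A) = r$ by counting separately the choices for $B := A \setminus [\lambda k / 2]$ and (given $B$) for $A' := A \cap [\lambda k / 2]$. The key observation, which we borrow from the proof of Lemma~\ref{lem:r:vs:b}, is that $A' + A' \subset [\lambda k]$ is disjoint from $(B + B) \setminus [\lambda k]$, so
$$\big| [\lambda k] \setminus (A' + A') \big| \ge \mu b.$$
Since $\mu b \ge 2b - 2 \ge 2^{400} \lambda^{24}$ (using $b \ge 2^{550} \lambda^{29}$ from~\eqref{eq:b:lower:bound}), Corollary~\ref{cor:prob:sumset} applies and gives at most $e^{2 \delta \mu b} \big( (\lambda - 2) / \lambda \big)^{\mu b / 2} \binom{\lambda k / 2}{k - b}$ choices for $A'$. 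It remains to bound the number of choices for $B$, for which we split into two cases.

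For $\mu > 2$ (covering both $\mu \in (2, \lambda/2]$ and $\mu \ge 2 \lambda$), Lemma~\ref{lem:count:Bs} bounds the number of choices for $B$, and after multiplying and applying~\eqref{eq:binomial:usual:application} we obtain
$$N(b,\mu,r) \le e^{O(\delta \mu b)} h(\mu, \lambda)^{b} \binom{\lambda k / 2}{k}, \qquad \text{where } h(\mu, \lambda) := \frac{\mu - 2}{\lambda - 2} \bigg( \frac{\mu (\lambda - 2)}{\lambda (\mu - 2)} \bigg)^{\mu / 2}.$$
Since $r \le 2^{11} \mu b$, the required bound reduces to $\log h(\mu, \lambda) \le -\mu / (2^5 \lambda)$. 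Differentiating, $\partial_\mu \log h(\mu,\lambda) = \tfrac12 \log\big( \mu (\lambda - 2) / (\lambda (\mu - 2)) \big)$, which vanishes uniquely at $\mu = \lambda$, where $h(\lambda, \lambda) = 1$, and $\partial_\mu^2 \log h(\mu,\lambda) = -1/(\mu(\mu-2)) < 0$. Thus $\log h$ is strictly concave in $\mu$ with a global maximum of $0$ at $\mu = \lambda$, and it suffices to verify the inequality at the endpoints $\mu = \lambda/2$ and $\mu = 2\lambda$ (and asymptotically as $\mu \to \infty$). Direct substitution yields $\log h(\lambda/2, \lambda), \log h(2\lambda, \lambda) \le -1/8$, while the asymptotic $\log h(\mu,\lambda) \sim \log(\mu/\lambda) - \mu/\lambda$ together with $\log x \le 31 x / 32$ for $x \ge 2$ handles $\mu \to \infty$.

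For $\mu \le 2$, Lemma~\ref{lem:count:Bs} does not apply; instead we count $B$ via Freiman's $3k - 4$ theorem (Lemma~\ref{lem:F3k4}). From $|B_1 + B_1| + |B_2 + B_2| \le \mu b \le 2 b$ together with $|B_i + B_i| \ge 2 b_i - 1$ we deduce $|B_i + B_i| \le 2 b_i + 1 \le 3 b_i - 4$ for every $i$ with $b_i \ge 5$, so each such $B_i$ lies in an arithmetic progression of length $b_i + 2$; the few sets with $b_i < 5$ are counted trivially. This yields at most $r^{O(1)} b^{O(1)}$ choices for $B$. Combining with Corollary~\ref{cor:prob:sumset} (using $\mu b \ge 2b - 2$) and~\eqref{eq:binomial:usual:application} gives
$$N(b,\mu,r) \le r^{O(1)} b^{O(1)} e^{O(\delta b)} \cdot \frac{\lambda}{\lambda - 2} \bigg( \frac{2}{\lambda} \bigg)^{b} \binom{\lambda k / 2}{k},$$
and the required inequality reduces to $b \log(\lambda / 2) \ge r / (2^{16} \lambda) + O(\log r + \log b)$, which holds comfortably because $b \ge r / 2^{12}$ (from $\mu \le 2$ and $r \le 2^{11} \mu b$) and $\log(\lambda / 2) \ge \log(3/2)$. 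The principal obstacle is the verification of $\log h(\mu, \lambda) \le -\mu / (2^5 \lambda)$ uniformly over the prescribed range of $\mu$; while the estimate holds with substantial slack, the computation is delicate because both factors defining $h$ vary non-trivially with $\mu$ and $\lambda$.
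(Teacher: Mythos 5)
Your proposal follows essentially the same route as the paper: you bound the choices for $A' = A \cap [\lambda k/2]$ via $|[\lambda k] \setminus (A'+A')| \ge \mu b$ and Corollary~\ref{cor:prob:sumset}, and the choices for $B$ via Lemma~\ref{lem:count:Bs} when $\mu > 2$ and via Fre\u{\i}man's $3k-4$ theorem (Lemma~\ref{lem:F3k4}) when $\mu \le 2$; the only difference is the finishing calculus, where the paper applies Observation~\ref{obs:log_concavity} with $y = 2\lambda - 2$ (resp.\ $y = \lambda/2$) and you instead analyse the single function $h(\mu,\lambda)$ directly, which is the same log-concavity idea in different packaging.

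One step of your verification is not airtight as written. On $[2\lambda,\infty)$ both $\log h(\cdot,\lambda)$ and the target $-\mu/(2^5\lambda)$ are decreasing, so concavity together with the check at $\mu = 2\lambda$ and the asymptotics as $\mu \to \infty$ does not by itself exclude a violation at intermediate $\mu$: a concave function can be nonpositive at an endpoint and at infinity yet positive in between. The fix is one line using the derivative formula you already derived: with $g(\mu) := \log h(\mu,\lambda) + \mu/(2^5\lambda)$ one has $g'(2\lambda) = \tfrac12 \log\frac{\lambda-2}{\lambda-1} + \tfrac{1}{2^5\lambda} \le -\tfrac{1}{2(\lambda-1)} + \tfrac{1}{2^5\lambda} < 0$, and $g'$ is decreasing, so $g$ is decreasing on $[2\lambda,\infty)$ and the single check $g(2\lambda) \le 0$ suffices; alternatively the paper's bound $h(\mu,\lambda) \le 2\big((\lambda-1)/\lambda\big)^{\mu/2}$ from Observation~\ref{obs:log_concavity} does the same job for all $\mu \ge 2\lambda$ at once. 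On $(2,\lambda/2]$ your endpoint reduction is fine, since there $\log h$ is increasing while the target decreases. The remaining items (the numerical checks $\log h(\lambda/2,\lambda), \log h(2\lambda,\lambda) \le -1/8$, the absorption of the $e^{O(\delta\mu b)}$ factors, and the $\mu \le 2$ case via two short progressions and $\mu b \ge 2b-2$) are correct and match the paper's estimates.
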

 
\begin{proof}
For each $A \in \cD(b,\mu)$, set $A' := A \cap [\lambda k / 2]$ and $B := A \setminus [\lambda k / 2]$, and observe that 
$$\big| [\lambda k] \setminus \big( A' + A' \big) \big| \,\ge\, \big| \big( B + B \big) \setminus [\lambda k] \big| \,=\, \mu b,$$ 
since $|A+A| \le \lambda k$. Hence, by Corollary~\ref{cor:prob:sumset} applied with $m = \mu b \ge 2^{400} \lambda^{24}$, and using~\eqref{eq:binomial:usual:application}, there are at most 
\begin{equation}\label{eq:counting:Aprime:big:or:small:sumset}
e^{2\delta \mu b} \left(\frac{\lambda - 2}{\lambda}\right)^{\mu b/2} \bigg( \frac{2}{\lambda - 2} \bigg)^b \binom{\lambda k/2}{k},
\end{equation}
choices for the set $A'$. 


Suppose first that $\mu > 2$, and recall from Lemma \ref{lem:count:Bs} that in this case there are at most 
\begin{equation}\label{eq:counting:B:big:or:small:sumset}
e^{2\delta b} \bigg( \frac{\mu - 2}{2} \bigg)^{b} \bigg( \frac{\mu}{\mu - 2} \bigg)^{\mu b / 2}
\end{equation}
sets $B$ with $B = A \setminus [\lambda k / 2]$ for some $A \in \cD(b,\mu)$. Moreover,  applying Observation~\ref{obs:log_concavity} with $x = \mu$ and $y = 2\lambda - 2$ gives
$$\bigg( \frac{\mu - 2}{2} \bigg)^b \bigg( \frac{\mu}{\mu - 2} \bigg)^{\mu b/2} \le (\lambda - 2)^b \bigg( \frac{\lambda - 1}{\lambda - 2} \bigg)^{\mu b/2}.$$
Thus, if $\mu \ge 2\lambda$ (and therefore $\mu \ge 6$), then the product of~\eqref{eq:counting:Aprime:big:or:small:sumset} and~\eqref{eq:counting:B:big:or:small:sumset} is at most\footnote{For the penultimate step, recall that $\delta \le 2^{-7} \lambda^{-1}$, and apply the inequality $2 \cdot e^{-x/2} < e^{-x/16}$, which holds for all $x \ge 2$, with $x = \mu / \lambda$.} 
$$e^{3\delta \mu b} \cdot 2^b \left(\frac{\lambda - 1}{\lambda}\right)^{\mu b/2} \binom{\lambda k/2}{k} \le \exp\bigg( - \frac{\mu b}{2^5 \lambda} \bigg) \binom{\lambda k/2}{k} \le \exp\bigg( - \frac{r}{2^{16} \lambda} \bigg) \binom{\lambda k/2}{k},$$
since $r \le 2^{11} \mu b$. 

Next, if $\mu > 2$ and $\lambda > 4$, then applying Observation~\ref{obs:log_concavity} with $x = \mu$ and $y = \lambda/2$ gives
$$\bigg( \frac{\mu - 2}{2} \bigg)^b \bigg( \frac{\mu}{\mu - 2} \bigg)^{\mu b/2} \le \bigg( \frac{\lambda - 4}{4} \bigg)^b \bigg( \frac{\lambda}{\lambda - 4} \bigg)^{\mu b/2}.$$
Thus, if $2 < \mu \le \lambda / 2$ (and therefore $\lambda > 4$), then the product of~\eqref{eq:counting:Aprime:big:or:small:sumset} and~\eqref{eq:counting:B:big:or:small:sumset} is at most 
$$e^{3\delta \mu b} \cdot \left(\frac{\lambda - 2}{\lambda - 4}\right)^{\mu b / 2} \bigg( \frac{\lambda - 4}{2\lambda - 4} \bigg)^b \binom{\lambda k/2}{k} \le 2^{-b/4} \binom{\lambda k/2}{k},$$
where the final step holds since $\delta = 2^{-32} \lambda^{-3}$, $\mu \le \lambda / 2$, 
and 
$$2^4 \cdot \bigg( \frac{\lambda - 2}{\lambda - 4} \bigg)^{\lambda} \bigg( \frac{\lambda - 4}{2\lambda - 4} \bigg)^4 = \bigg( 1 + \frac{2}{\lambda - 4} \bigg)^{\lambda - 4} \le e^{2}.$$
Since $r \le 2^{11} \mu b \le 2^{10} \lambda b$, it follows that if $2 < \mu \le \lambda / 2$ then there are at most 
$$\exp\bigg( - \frac{r}{2^{13} \lambda} \bigg) \binom{\lambda k/2}{k}$$
sets $A \in \cD(b,\mu)$ with $r(A) = r$.

Finally, if $\mu \le 2$ then $B$ is contained in two arithmetic progressions of combined size at most $|B| + 2$, by Lemma~\ref{lem:F3k4}, and so in this case there are at most $r^6 \le 2^{72} b^6 \le e^{\delta b}$ choices for~$B$, by~\eqref{eq:b:lower:bound}. Noting that $\mu b \ge 2b - 2$, it follows from~\eqref{eq:counting:Aprime:big:or:small:sumset} that there are at most
$$e^{5\delta b} \left(\frac{\lambda - 2}{\lambda}\right)^{b-1} \bigg( \frac{2}{\lambda - 2} \bigg)^b \binom{\lambda k/2}{k} \le \exp\bigg( - \frac{r}{2^{14}} \bigg) \binom{\lambda k/2}{k}$$
choices for $A$, where the last inequality holds since $\lambda \ge 3$ 
and $r \le 2^{11} \mu b \le 2^{12} b$.
\end{proof}



\section{Counting the very dense sets with containers}\label{sec:dense}

It remains to bound the size of the family\footnote{Recall that the family $\cD(b,\mu)$ was defined in~\eqref{def:Dbmu}.} 
\begin{equation}\label{def:Dstar}
\cD^*(b,\mu) := \Big\{ A \in \cD(b,\mu) : r(A) \le 2^{11} \mu b \Big\}
\end{equation}
of \emph{very dense} sets, for each $\mu > 2$ with $\lambda/2 \le \mu \le 2\lambda$. To do so, we will once again use the container theorem from~\cite{C19} (Theorem~\ref{thm:M:containers}), but this time our application of it will be rather different. Recall first, from Lemma~\ref{lem:middle:covered}, that the `missing' set $M(A) := [\lambda k] \setminus (A+A)$ is typically contained near the ends of $[\lambda k]$ (see Lemma~\ref{lem:hitmiddle}, below). We will use Corollary~\ref{cor:M:containers} to find a family of $2^{o(b)}$ containers $(C,D)$ for the parts of $A$ `close' to the endpoints, and for $M(A)$ (see Corollary~\ref{cor:M:containers:app}). We will then, in Lemma~\ref{lem:finalcount}, bound the number of sets $A \in \cD^*(b,\mu)$ corresponding to each container. Our bound decreases exponentially with $b$, and we will therefore be able to take a union bound over containers. 




To state the version of Corollary~\ref{cor:M:containers} we will use, we need a little additional notation. First, for each $b \in \N$, set $Y(b) := Y_1 \cup Y_2$ and $X(b) := (Y_1 + Y_1) \cup (Y_2 + Y_2)$, where
$$Y_1 := \big\{ 0,\ldots, 2^{18} \lambda^2 b \big\}, \qquad \text{and} \qquad Y_2 := \big\{ \lambda k / 2 - 2^{18} \lambda^2 b, \ldots, \lambda k / 2 \big\},$$
Moreover, define $M(A) := [\lambda k] \setminus (A+A)$ and 
$$\cT(b) := \big\{ A \in \cI \,:\, b(A) = b \,\text{ and }\, M(A) \subset X(b) \big\}.$$
As we will see below (see Lemma~\ref{lem:hitmiddle}), this family contains almost all of $\cD^*(b,\mu)$.

Our key tool in this section will be the following immediate consequence of Corollary~\ref{cor:M:containers}. 

\begin{cor}\label{cor:M:containers:app}
For each $b \in \N$, there exists a family $\cB(b) \subset 2^{X(b)} \times 2^{Y(b)}$ of size at most 
$$\exp\big( 2^{50} \lambda^2 b^{7/8} \big)$$ 
such that:
\begin{itemize}
\item[$(a)$] For each $A \in \cT(b)$, there exists $(C, D) \in \cB(b)$ with $M(A) \subset C$ and $A \cap Y(b) \subset D$.
\item[$(b)$] For every $(C,D) \in \cB(b)$, 
$$|D| \leq \max\bigg\{ |Y(b)| - \frac{|C|}{2} + |Y(b)|^{5/6}, \, \frac{3|Y(b)|}{\log |Y(b)|} \bigg\}.$$
\end{itemize}
\end{cor}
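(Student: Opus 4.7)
The plan is to deduce this corollary directly from Corollary~\ref{cor:M:containers}, taking $S_1 = Y_1$ and $S_2 = Y_2$ so that the sets $Y$ and $X$ of Corollary~\ref{cor:M:containers} coincide with $Y(b)$ and $X(b)$, and choosing
$$\gamma \,:=\, \tfrac{1}{4} |Y(b)|^{-1/6},$$
which is less than $1/4$ (and we may assume $|Y(b)|$ is large enough for the theorem to apply, noting that $b$ is already bounded below by~\eqref{eq:b:lower:bound} from the range in which the corollary is used). I would then simply let $\cB(b)$ be the family of pairs produced by Corollary~\ref{cor:M:containers} with these parameters.

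To verify property~$(a)$, let $A \in \cT(b)$ and set $U := A \cap Y(b)$ and $W := M(A)$. By definition of $\cT(b)$ we have $W \subset X(b)$, and since $U + U \subset A + A$ is disjoint from $M(A) = [\lambda k] \setminus (A+A)$, we obtain $W \subset X(b) \setminus (U + U)$, exactly as required by Corollary~\ref{cor:M:containers}$(a)$. The resulting container $(C,D)\in\cB(b)$ then satisfies $M(A) \subset C$ and $A \cap Y(b) \subset D$. Property~$(b)$ is immediate from Corollary~\ref{cor:M:containers}$(b)$ after observing that our choice of $\gamma$ gives
$$(1+4\gamma)|Y(b)| = |Y(b)| + |Y(b)|^{5/6}.$$

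For the size bound, plugging $\gamma = \tfrac{1}{4}|Y(b)|^{-1/6}$ into~\eqref{eq:number:of:containers} yields $\exp\big( 2^{21} |Y(b)|^{5/6} (\log |Y(b)|)^{3/2} \big)$. Since $Y_1$ and $Y_2$ are intervals of length at most $2^{18}\lambda^2 b + 1$, we have $|Y(b)| \le 2^{20}\lambda^2 b$, so $|Y(b)|^{5/6} \le 2^{17}\lambda^{5/3} b^{5/6}$. Moreover, since $b \ge 2^{550}\lambda^{29}$ in the range where the corollary is used (by~\eqref{eq:b:lower:bound}), we have $\log|Y(b)| \le 3\log b$ and $(\log b)^{3/2} \le b^{1/24}$, giving $(\log|Y(b)|)^{3/2} \le b^{1/24}$ and hence the desired bound $\exp(2^{50}\lambda^2 b^{7/8})$. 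The only mildly delicate point is the numerical check that the $(\log |Y(b)|)^{3/2}$ factor is absorbed into $b^{1/24}$, which is comfortable thanks to the strong lower bound on $b$ available in~\eqref{eq:b:lower:bound}; there is no genuine obstacle here, and the entire statement is essentially a careful repackaging of the abstract container corollary for the specific geometry dictated by the sets $Y_1,Y_2$.
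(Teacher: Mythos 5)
Your proposal is correct and follows essentially the same route as the paper: apply Corollary~\ref{cor:M:containers} with $S_1 = Y_1$, $S_2 = Y_2$ and $\gamma = \tfrac14|Y(b)|^{-1/6}$, verify $(a)$ via $U = A \cap Y(b)$, $W = M(A) \subset X(b)\setminus(U+U)$, and read off $(b)$ and the size bound from~\eqref{eq:size:of:D} and~\eqref{eq:number:of:containers} using $|Y(b)| \le 2^{20}\lambda^2 b$. The only cosmetic difference is that the paper absorbs the $(\log|Y(b)|)^{3/2}$ factor via the universal bound $\log|Y(b)| \le 2^6(\lambda^2 b)^{1/36}$, valid for every $b \in \N$ as the statement requires, whereas you invoke the lower bound~\eqref{eq:b:lower:bound} on $b$, which only covers the range in which the corollary is actually applied.
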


\begin{proof}
We apply Corollary~\ref{cor:M:containers} with $S_1 = Y_1$, $S_2 = Y_2$ and $\gamma = |Y(b)|^{-1/6} / 4$. The bound on the size of $\cB(b)$ follows from~\eqref{eq:number:of:containers}, since $\log |Y(b)| \le 2^{6} (\lambda^2 b)^{1/36}$ and 
$$2^{17} \gamma^{-2} \sqrt{|Y(b)|} = 2^{21} |Y(b)|^{5/6} \le 2^{21} \big( 2^{20} \lambda^2 b \big)^{5/6} \le 2^{41} \lambda^{5/3} b^{5/6},$$
where in both cases we used the bound $|Y(b)| \le 2^{20} \lambda^2 b$.
 
The bound on $|D|$ for each $(C,D) \in \cB(b)$ follows from~\eqref{eq:size:of:D}. Finally, for each $A \in \cT(b)$ we apply Corollary~\ref{cor:M:containers}$(a)$ with $U := A \cap Y(b)$ and $W := M(A) \subset X(b) \setminus (U + U)$. It follows that there exists $(C, D) \in \cB(b)$ such that $M(A) \subset C$ and $A \cap Y(b) \subset D$, as claimed.
\end{proof}

Recall from~\eqref{eq:b:lower:bound} that $b(A) \ge 2^{550} \lambda^{29}$ for every $A \in \cD^*(b,\mu) \subset \cD$. Since $\delta = 2^{-32} \lambda^{-3}$, it follows that 
\begin{equation}\label{eq:Yb:small}
|Y(b)|^{5/6} \le \delta b.
\end{equation}
In the calculations below, we will also need the inequalities
\begin{equation}\label{eq:another:binomial:inequality}
{a + c \choose b} \le \bigg( 1 + \frac{c}{a-b} \bigg)^b {a \choose b} \qquad \text{and} \qquad {a - c \choose b - c} \le \bigg( \frac{b}{a} \bigg)^c {a \choose b}
\end{equation}
Before bounding the number of sets in each container, let's first observe that, by our choice of $X(b)$, most members of $\cD^*(b,\mu)$ are also in $\cT(b)$. 

\begin{lemma}\label{lem:hitmiddle}
For each $b \le \delta k$ and $\mu \le 2\lambda$, 
there are at most
\begin{equation}\label{eq:hitmiddle}
e^{-b} {\lambda k / 2 \choose k}
\end{equation}
sets $A \in \cD^*(b,\mu)$ such that $M(A) \not\subset X(b)$.
\end{lemma}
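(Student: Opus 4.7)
The plan is to show that if $M(A) \not\subset X(b)$ then $A' := A \cap [\lambda k/2]$ must fail to cover a large middle interval of $[\lambda k]$ with its sumset, apply Lemma~\ref{lem:middle:covered} to count such $A'$, and bound the number of extensions $B = A \setminus [\lambda k/2]$ using the constraint $r(A) \le 2^{11}\mu b$ built into $\cD^*(b,\mu)$.

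First I would unpack $X(b)$: one has $Y_1 + Y_1 = \{0, \dots, 2^{19}\lambda^2 b\}$ and $Y_2 + Y_2 = \{\lambda k - 2^{19}\lambda^2 b, \dots, \lambda k\}$, so the complement of $X(b)$ inside $[\lambda k]$ is the middle interval $I := \{2^{19}\lambda^2 b + 1, \dots, \lambda k - 2^{19}\lambda^2 b - 1\}$. Any $A$ with $M(A) \not\subset X(b)$ contains some $x \in I$ with $x \notin A+A$, and since $A'+A' \subset A+A$, that same $x$ is missing from $A'+A'$. I would then apply Lemma~\ref{lem:middle:covered} with $n := \lambda k/2$, set-size $k-b$, and $M := 2^{19}\lambda^2 b$ (noting that $I$ is contained in the lemma's middle interval $\{M+1, \dots, 2n-M+1\}$, so our failure event implies the lemma's). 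This bounds the number of bad $A'$ by
\[
\frac{8}{p^2}\,(1-p^2)^{M/2}\,\binom{\lambda k/2}{k-b},
\]
where $p = 2(k-b)/\lambda k$. Since $b \le \delta k$ yields $p \ge 1.9/\lambda$, the prefactor simplifies to at most $3\lambda^2 \exp\bigl(-3\cdot 2^{18} b\bigr)$.

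Next I would count the extensions. Definition~\ref{def:I} forces both $B_1 := A \cap (-\infty,0]$ and $B_2 := A \cap (\lambda k/2, \infty)$ to be non-empty, so $\min(A) \in B_1$ and $\max(A) \in B_2$; combined with $r(A) \le 2^{11}\mu b \le 2^{12}\lambda b$ (using $\mu \le 2\lambda$), this shows $B \subset \{-r, \dots, 0\} \cup \{\lambda k/2+1, \dots, \lambda k/2 + r\}$, a set of size at most $2^{13}\lambda b$, giving at most $\binom{2^{13}\lambda b}{b} \le (2^{15}\lambda)^b$ choices for $B$ given $A'$. Combining with~\eqref{eq:binomial:usual:application} (which gives $\binom{\lambda k/2}{k-b} \le 2^b \binom{\lambda k/2}{k}$ for $\lambda \ge 3$), the total count of bad sets $A$ is at most
\[
3\lambda^2 \exp\bigl(-3\cdot 2^{18}b\bigr) \cdot (2^{16}\lambda)^b \binom{\lambda k/2}{k}.
\]

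The main (and only) obstacle is purely arithmetic: one must verify that the exponential decay coming from Lemma~\ref{lem:middle:covered} overwhelms the combinatorial factor $(2^{16}\lambda)^b = \exp\bigl(O(b\log\lambda)\bigr)$ from counting extensions. Since $\log(2^{16}\lambda) \ll 2^{18}$ for every $\lambda \ge 3$, the net exponent in $b$ sits comfortably below $-b$, and the polynomial prefactor $3\lambda^2$ is absorbed using the lower bound $b \ge 2^{550}\lambda^{29}$ from~\eqref{eq:b:lower:bound}; this delivers the claimed bound $e^{-b}\binom{\lambda k/2}{k}$.
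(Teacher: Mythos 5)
Your route is genuinely different from the paper's. The paper does not split $A$ at all: it notes that every $A \in \cD^*(b,\mu)$ lies in the interval $L = [-2^{11}\mu b, \lambda k/2 + 2^{11}\mu b]$, treats $A$ as a uniformly random $k$-subset of $L$, applies Lemma~\ref{lem:middle:covered} directly to $A$ (with $p \ge 1/\lambda$ and $M \ge 2^{19}\lambda^2 b$, giving probability at most $\exp(-2^{17}b)$), and then multiplies by the total number of $k$-subsets of $L$, which by~\eqref{eq:another:binomial:inequality} costs only a factor $e^{2^{16}b}$ over $\binom{\lambda k/2}{k}$, uniformly in $\lambda$. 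You instead apply Lemma~\ref{lem:middle:covered} to $A' = A \cap [\lambda k/2]$ alone — which is legitimate, since $A'+A' \subset A+A$ and the complement of $X(b)$ in $[\lambda k]$ is exactly the middle interval you describe — and then pay $\binom{2^{13}\lambda b}{b} \le (2^{15}\lambda)^b$ for the possible positions of $B$, using $r(A) \le 2^{11}\mu b \le 2^{12}\lambda b$. This is a perfectly reasonable alternative decomposition; its price is that the extension count carries a factor $\lambda^b$ that the paper's accounting never produces.

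That factor is the one place where your write-up has a real (though easily repaired) flaw: the assertion that $\log(2^{16}\lambda) \ll 2^{18}$ ``for every $\lambda \ge 3$'' is false, and nothing in the lemma caps $\lambda$ by an absolute constant (in Theorem~\ref{thm:structure:quant}, $\lambda$ may grow like a power of $k$). The decay you extract from Lemma~\ref{lem:middle:covered} is $\exp(-\Theta(2^{18})b)$ uniformly in $\lambda$, because $p^2 M = \Theta(2^{19}b)$, so once you bound $(2/(\lambda-2))^b$ by $2^b$ the net exponent is $-(3\cdot 2^{18} - \log(2^{16}\lambda) - o(1))b$, which is not below $-b$ when $\lambda \ge e^{3\cdot 2^{18}}$. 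The fix is simply not to discard the helpful factor: keep $(2/(\lambda-2))^b$ from~\eqref{eq:binomial:usual:application}, so that the combined extension cost is $\bigl(2^{15}\lambda \cdot \tfrac{2}{\lambda-2}\bigr)^b \le (3\cdot 2^{16})^b \le e^{13b}$ for all $\lambda \ge 3$. Then the count of bad sets is at most $3\lambda^2 \exp\bigl(-(3\cdot 2^{18}-13)b\bigr)\binom{\lambda k/2}{k}$, and the prefactor $3\lambda^2$ is absorbed via $b \ge 2^{550}\lambda^{29}$ from~\eqref{eq:b:lower:bound} (if $\cD^*(b,\mu)$ is empty there is nothing to prove), giving the claimed $e^{-b}\binom{\lambda k/2}{k}$ uniformly in $\lambda$. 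With that one change your argument is complete.
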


\begin{proof}
Recalling~\eqref{def:Dstar}, let $A$ be a uniformly random $k$-subset of $L := [-2^{11} \mu b,\lambda k / 2 + 2^{11} \mu b]$, and observe that
$$\Pr\big( M(A) \not\subset X(b) \big) \le \Pr\Big( \big\{ M'+ 1, \dotsc, \lambda k - M' - 1 \big\} \not\subset A + A \Big),$$
where $M' := 2^{19} \lambda^2 b$, by the definitions of $M(A) = [\lambda k] \setminus (A+A)$ and $X(b)$. By Lemma~\ref{lem:middle:covered} (applied with $n = \lambda k / 2 + 2^{12} \mu b + 1$ and $M = M' + 2^{12} \mu b + 2$), it follows that 
$$\Pr\big( M(A) \not\subset X(b) \big) \le \frac{8}{p^2} \cdot \big(1 - p^2\big)^{M/2},$$
where $p = k \big( \lambda k / 2 + 2^{12} \mu b + 1 \big)^{-1}$. Now, recall that $b \le \delta k$ and $\mu \le 2\lambda$, and observe that therefore $p \ge 1/\lambda$. 
Since $M \ge M' = 2^{19} \lambda^2 b$, it follows that
$$\Pr\big( M(A) \not\subset X(b) \big) \le 8\lambda^2 \cdot e^{-M/2\lambda^2} \le \exp\big( - 2^{17} b \big),$$
since $b \ge 2^{550} \lambda^{29}$. In order to deduce a bound on the number of sets such that $M(A) \not\subset X(b)$, we simply need to multiply by the total number of $k$-subsets of $L$. 
There are at most
$${\lambda k / 2 + 2^{12} \mu b + 1 \choose k} \le \bigg( 1 + \frac{2^{13} \mu b + 2}{(\lambda - 2) k} \bigg)^k {\lambda k / 2 \choose k} \le \exp\big( 2^{16} b \big) {\lambda k / 2 \choose k}$$
such sets, where the first inequality holds by~\eqref{eq:another:binomial:inequality}, 
and the second because $\lambda \ge 3$ and $\mu \le 2\lambda$. Hence, there are at most 
$$\exp\big( - 2^{17} b + 2^{16} b \big) {\lambda k / 2 \choose k} \le e^{-b} {\lambda k / 2 \choose k}$$
sets $A \in \cD^*(b,\mu)$ with $M(A) \not\subset X(b)$, as claimed.
\end{proof}

\pagebreak

To deduce Lemma~\ref{lem:dense:quant} from Corollary~\ref{cor:M:containers:app}, we will need to bound the size of the containers in $\cB(b)$. The following lemma provides the bound we need. 

\begin{lemma}\label{lem:finalcount}
Let $b \le \delta k$ and $\mu > 2$, with $\lambda / 2 \le \mu \le 2\lambda$. For each $(C,D) \in \cB(b)$,  
there are at most  
$$e^{- b / 32\lambda} \binom{\lambda k/2}{k}$$
sets $A \in \cD^*(b,\mu)$ such that $M(A) \subset C$ and $A \cap Y(b) \subset D$.
\end{lemma}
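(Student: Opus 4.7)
The plan is to decompose $A = A' \cup B$, where $A' := A \cap [\lambda k/2]$ has size $k - b$ and $B := A \setminus [\lambda k/2]$ has size $b$, and to bound the possibilities for each part separately.

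First, I would establish the lower bound $|C| \ge \mu b$. Since $M(A) \subset C \subset [\lambda k]$ and $(B + B) \setminus [\lambda k]$ are disjoint subsets of $A + A$, with sizes $\lambda k - |M(A)|$ and $\mu b$ respectively (the former by definition of $M(A)$, the latter because $A \in \cD^*(b,\mu)$), the hypothesis $|A + A| \le \lambda k$ forces $|C| \ge |M(A)| \ge \mu b$. Combining this with Corollary~\ref{cor:M:containers:app}$(b)$ and the estimate~\eqref{eq:Yb:small}, we get $g := |Y(b)| - |D| \ge \mu b/2 - \delta b$ in the generic case $|D| \le |Y(b)| - |C|/2 + |Y(b)|^{5/6}$, while $g \ge |Y(b)|/2 \ge 2^{17}\lambda^2 b$ in the small-$D$ case $|D| \le 3|Y(b)|/\log|Y(b)|$. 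In the latter case, a direct count of $k$-subsets of $\{\min(A),\ldots,\max(A)\}$ (an interval of length $\lambda k/2 + O(\lambda b)$, since $r(A) \le 2^{11}\mu b$) that avoid the $g$ excluded points of $Y(b) \setminus D$ immediately gives a bound much stronger than required.

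For the generic case, I would bound $\#B$ by Lemma~\ref{lem:count:Bs}, obtaining $e^{2\delta b} ((\mu-2)/2)^b (\mu/(\mu-2))^{\mu b/2}$, and, for each fixed $B$, bound the number of admissible $A'$ by a Vandermonde-style sum over $\alpha := |A' \cap Y(b)|$, which telescopes to $\binom{\lambda k/2 - g + 1}{k - b}$. Applying~\eqref{eq:binomial:classic:too} and then Observation~\ref{obs:log_concavity} with $x = \mu$, $y = \lambda$ (which collapses the cross term $((\mu-2)/(\lambda-2))^b (\mu/(\mu-2))^{\mu b/2}$ into $(\lambda/(\lambda-2))^{\mu b/2}$), this leads to
\[
\#A \,\le\, e^{O(\delta b)} \left(\frac{\lambda}{\lambda - 2}\right)^{\mu b/2 - g} \binom{\lambda k/2}{k},
\]
up to a correction of size $e^{O(bg/k)} = e^{O(b/\lambda)}$ that is absorbed into the $O(\delta b)$ term because $b \le \delta k$ and $g \le |Y(b)| \le 2^{20}\lambda^2 b$.

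The main obstacle is ensuring this exponent is negative enough to beat the target $e^{-b/(32\lambda)}$. Since $\log(\lambda/(\lambda-2)) \ge 2/\lambda$, this reduces to showing $g - \mu b/2 \gtrsim b/\lambda$, equivalently $|C| \ge \mu b + \Omega(b/\lambda)$. For $|C|$ comfortably larger than $\mu b$ (say $|C| \ge 2^{15}\lambda b$), the direct counting used in the small-$D$ case already beats the target. The delicate regime is $|C| \in [\mu b, \mu b + O(b/\lambda)]$: here one must sharpen the product estimate by exploiting the near-tightness of the inequality $|(A + A) \setminus [\lambda k]| \ge \mu b$, which forces $(A' + B) \setminus [\lambda k]$ to agree with $(B + B) \setminus [\lambda k]$ up to $O(b/\lambda)$ elements, and this imposes a strong structural constraint on $A'$ near the endpoints of $[\lambda k/2]$ (namely on $A' \cap [1, 2^{12}\lambda b]$ and $A' \cap [\lambda k/2 - 2^{12}\lambda b, \lambda k/2]$) that is absent from the Vandermonde count and provides the missing exponential saving.
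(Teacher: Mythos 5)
There is a genuine gap, and it sits exactly at the heart of the lemma. In your generic case you count $A'$ only through the constraint $A'\cap\big(Y(b)\setminus D\big)=\emptyset$, where $|Y(b)\setminus D|\ge |C|/2-\delta b\ge \mu b/2-\delta b$. Avoiding $\approx\mu b/2$ prescribed points costs $A'$ a factor of roughly $\big(\tfrac{\lambda-2}{\lambda}\big)^{\mu b/2}$, which \emph{exactly} cancels the entropy $\big(\tfrac{\mu}{\mu-2}\big)^{\mu b/2}\le\big(\tfrac{\lambda}{\lambda-2}\big)^{\mu b/2}$ paid for $B$ in Lemma~\ref{lem:count:Bs}; as you acknowledge, this leaves a bound of the form $e^{O(\delta b)}\binom{\lambda k/2}{k}$ with no saving. (Your reduction is also quantitatively off: since each avoided point buys only $e^{-2/\lambda}$, beating $e^{-b/32\lambda}$ requires $|C|\ge \mu b+\Omega(b)$, not $\mu b+\Omega(b/\lambda)$, so the ``delicate regime'' is much wider than you state.) The fix you gesture at does not close this gap: the near-tightness of $|(A+A)\setminus[\lambda k]|\ge\mu b$ constrains only those $x\in A'$ with $x+\max(B)>\lambda k$ or $x+\min(B)\le 0$, and the number of such positions is at most $2r(A)$. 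Since $r(A)$ can be as small as $\Theta(b)$ in $\cD^*(b,\mu)$ while $\mu b\ge \lambda b/2$, this constraint can restrict far fewer than $\mu b$ positions and becomes essentially vacuous, so it cannot supply the missing exponential factor in general.

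The idea you are missing — and the reason the container $C$ (not just $D$) is essential — is the paper's use of the \emph{translated} container $\tilde C:=C-\{\min(B),\max(B)\}$. One checks $|\tilde C\cap[\lambda k/2]|\ge|C|$, and that every $x\in S:=A'\cap\tilde C$ produces an element of $C\cap(A+A)=C\setminus M(A)$ (the two translates being disjoint), whence $|C|\ge|M(A)|+|S|\ge\mu b+|S|$. Thus, apart from the $s=|S|$ exceptional elements (which live in $T:=\tilde C\cap\tilde D$ and are cheap to choose when $s$ and $t$ are small), $A'$ must \emph{avoid a set of at least $\mu b+s$ points} of $[\lambda k/2]$ — twice as many as in your count. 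This yields a factor $\big(\tfrac{\lambda-2}{\lambda}\big)^{\mu b}$ against the $B$-entropy $\big(\tfrac{\lambda}{\lambda-2}\big)^{\mu b/2}$, for a net saving $\big(\tfrac{\lambda-2}{\lambda}\big)^{\mu b/2}\le e^{-b/2}$ valid for all $r(A)\le 2^{11}\mu b$, in particular uniformly in the regime $|C|\approx\mu b$ where your argument stalls; the bound~\eqref{eq:size:of:D} on $|D|$ is then invoked only in the complementary case where $s$ or $t$ is large. Your treatment of the small-$D$ branch of Corollary~\ref{cor:M:containers:app}$(b)$ and your derivation of $|C|\ge\mu b$ are fine, but without the translated-container step the main case of the lemma is unproved.
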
 

In the proof of Lemma~\ref{lem:finalcount}, we will need the following binomial inequalities, whose (straightforward) proofs are postponed to Appendix~\ref{app:binomial}. 
Set $\alpha := 2^{25} \lambda^2 \delta = 2^{-7} \lambda^{-1}$. 

\begin{obs}\label{obs:choices:boring}
Let $b \le \delta k$ and $\mu > 2$, with $\lambda / 2 \le \mu \le 2\lambda$, and let $s \le t \le 2^{22} \lambda^2 b$. Then
$${\lambda k / 2 - \mu b - s \choose k - b - s} \le e^{\alpha b} \bigg( \frac{\lambda - 2}{\lambda} \bigg)^{\mu b} \bigg( \frac{2}{\lambda - 2} \bigg)^{b} {\lambda k / 2 - s \choose k - s},$$
and
$${\lambda k / 2 - \mu b/2 - s/ 2 - t + \delta b \choose k - b - s} \le e^{\alpha b} \bigg( \frac{\lambda - 2}{\lambda} \bigg)^{\mu b/2} \bigg( \frac{2}{\lambda - 2} \bigg)^{b} \binom{\lambda k / 2 - s/ 2 - t}{k - s}.$$
\end{obs}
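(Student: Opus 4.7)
The plan is to expand the binomial ratio on the left-hand side of each inequality as an explicit factorial product, and to bound each factor using Taylor estimates for the logarithm. The key identity is
\begin{equation*}
\frac{{N - c \choose K - d}}{{N \choose K}} = \prod_{i=0}^{d-1}\frac{K - i}{N - i} \cdot \prod_{j=0}^{c - d - 1}\frac{N - K - j}{N - d - j},
\end{equation*}
valid for $0 \le d \le \min(c, K)$ and $K \le N$. The pointwise monotonicity bounds $(K-i)/(N-i) \le K/N$ and $(N-K-j)/(N-d-j) \le (N-K)/(N-d)$ then yield the coarser
\begin{equation*}
\frac{{N - c \choose K - d}}{{N \choose K}} \le \left(\frac{K}{N}\right)^d \left(\frac{N - K}{N - d}\right)^{c - d},
\end{equation*}
which I will compare term by term to the target. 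This identity is sharper than~\eqref{eq:binomial:classic:too} and essential for getting the correct constants.

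For the first inequality, I will take $N = \lambda k/2 - s$, $K = k - s$, $c = \mu b$, $d = b$, so that $N - K = (\lambda - 2)k/2$. Then $K/N \le 2/\lambda$ immediately, and $(N - K)/(N - b) \cdot \lambda/(\lambda - 2) = 1/(1 - 2(s + b)/(\lambda k))$. Using the sharp Taylor bound $-\log(1 - x) \le x + x^2$ for $0 \le x \le 1/2$ (the cruder $-\log(1 - x) \le 2x$ would lose a factor of $2$ in the final constant) and raising to the $(\mu - 1)b$-power, I plan to show that the second factor is at most $((\lambda - 2)/\lambda)^{(\mu - 1) b} e^{\alpha b}$; the crux is
\begin{equation*}
(\mu - 1) b \cdot 2(s+b)/(\lambda k) \;\le\; 2^{25} \lambda^2 b^2 / k \;\le\; \alpha b,
\end{equation*}
combining the hypotheses $\mu \le 2\lambda$, $s \le 2^{22} \lambda^2 b$, and $b \le \delta k = 2^{-32}\lambda^{-3} k$. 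Rearranging via the algebraic identity $(2/\lambda)^b ((\lambda - 2)/\lambda)^{(\mu - 1) b} = ((\lambda - 2)/\lambda)^{\mu b}(2/(\lambda - 2))^b$ then delivers the stated inequality.

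The second inequality will follow by the same plan with $N = \lambda k/2 - s/2 - t$, $K = k - s$, $c = \mu b/2 - \delta b$, $d = b$. Two wrinkles appear. First, $K/N$ may slightly exceed $2/\lambda$ (when $2t > (\lambda - 1)s$), but only by a factor $1 + O(\lambda^2 b/k)$, which exponentiates to $e^{O(\lambda^2 b^2/k)} \le e^{\alpha b/2}$ under the same hypotheses. Second, the exponent of $(\lambda - 2)/\lambda$ produced by the second product is $(\mu/2 - 1 - \delta) b$ rather than the desired $\mu b/2$; the discrepancy will be corrected via
\begin{equation*}
(2/\lambda)^b \,((\lambda-2)/\lambda)^{(\mu/2 - 1 - \delta) b} \;=\; ((\lambda - 2)/\lambda)^{\mu b/2} \,(2/(\lambda - 2))^b\, (\lambda/(\lambda - 2))^{\delta b},
\end{equation*}
and the residual factor $(\lambda/(\lambda - 2))^{\delta b} \le e^{2 \delta b}$ is negligible relative to $e^{\alpha b}$.

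The hard part will be that the constants leave essentially no slack: the several error terms of size $\Theta(\lambda^2 b^2/k)$ must each individually fit inside the allowance $\alpha b = 2^{25} \lambda^2 \delta b$, so every logarithmic estimate must be used in its sharpest first-order form and no correction term can be double-counted. Consequently the argument is purely a careful bookkeeping exercise: no ingredient beyond the factorial identity above, the Taylor expansion of $\log(1 - x)$, and the hypotheses $s, t \le 2^{22}\lambda^2 b$ and $b \le \delta k$ is needed.
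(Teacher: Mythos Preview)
Your approach is essentially the same as the paper's: both arguments reduce to routine comparisons of binomial coefficients via factorial ratios, and the error budget computation you outline---collecting terms of order $\lambda^2 b^2/k$ and checking they fit under $\alpha b = 2^{25}\lambda^2\delta b$---is exactly what the paper does. The only stylistic difference is that the paper separates the two reductions, applying~\eqref{fact:binomial_classic} to strip off the $\mu b$ (or $\mu b/2$) from the top and then~\eqref{eq:binomial:classic:too} to strip $b$ from the bottom, whereas you do both at once via your combined identity; for the second inequality the paper also peels off the $+\delta b$ first via~\eqref{eq:another:binomial:inequality} before repeating. The net bounds are identical.

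One small technical point: your identity requires $d \le c$, and in the second inequality you take $c = \mu b/2 - \delta b$ and $d = b$, so you need $\mu \ge 2 + 2\delta$. The hypotheses only give $\mu > 2$ and $\mu \ge \lambda/2$, so for $\mu$ in the (extremely narrow) window $(2, 2 + 2\delta)$ your product formula is not directly applicable as written. This is trivially repaired by handling the $+\delta b$ separately first, exactly as the paper does, after which $c = \mu b/2 > b = d$ follows from $\mu > 2$.
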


We are now ready to prove our key lemma.

\begin{proof}[Proof of Lemma~\ref{lem:finalcount}]
First, by Lemma~\ref{lem:count:Bs} and Observation~\ref{obs:log_concavity}, 
there are at most
\begin{equation}\label{eq:Csmall:Bcount}
e^{2\delta b} \bigg( \frac{\mu - 2}{2} \bigg)^{b} \bigg( \frac{\mu}{\mu - 2} \bigg)^{\mu b / 2} \le e^{2\delta b} \bigg( \frac{\lambda - 2}{2} \bigg)^{b} \bigg( \frac{\lambda}{\lambda - 2} \bigg)^{\mu b / 2}
\end{equation}
sets $B$ such that $B = A \setminus [\lambda k / 2]$ for some $A \in \cD^*(b,\mu) \subset \cD(b,\mu)$. Fix such a set $B$, let $A \in \cD^*(b,\mu)$ with $B = A \setminus [\lambda k / 2]$, and recall that $M(A) = [\lambda k] \setminus (A+A)$. Note that $|M(A)| \ge \mu b$, since $|A+A| \le \lambda k$ and $|(B+B) \setminus [\lambda k]| = \mu b$. Now, define\footnote{To avoid any possible confusion, we emphasize that $\tilde{C}$ is the union of two shifted copies of the set $C$.}
\begin{equation}\label{def:CDtilde}
\tilde{C} := C - \{ \min(B),\max(B) \} \qquad \text{and} \qquad \tilde{D} := \big( [\lambda k /2] \setminus Y(b) \big) \cup D,
\end{equation}
and observe that if $A \cap Y(b) \subset D$, then $A' \subset \tilde{D}$, where (as usual) $A' = A \cap [\lambda k / 2]$. Set 
$$S := \tilde{C} \cap A' \qquad \textup{and} \qquad T := \tilde{C} \cap \tilde{D},$$
and observe that $S \subset T$, and that for each $x \in S$, either $x + \max(B)$ or $x + \min(B)$ is contained in $C \cap (A+A)$. Moreover, the sets $S + \max(B)$ and $S + \min(B)$ are disjoint, since $S \subset [\lambda k / 2]$ and $\max(B) - \min(B) > \lambda k /2$. It follows that if $M(A) \subset C$, then 
\begin{equation}\label{eq:C:bound:muS}
|C| \ge |M(A)| + |S| \ge \mu b + |S|.
\end{equation}

For each $s,t \in \N$, let us write $g(s,t)$ for the number of sets $A \in \cD^*(b,\mu)$ such that $M(A) \subset C$ and $A \cap Y(b) \subset D$, and such that
$$|S| = s \qquad \textup{and} \qquad |T| = t.$$
Since $S \subset T$, we have at most ${t \choose s}$ choices for $S$. We will bound $g(s,t)$ in two different ways, depending on the values of $s$ and $t$. 

\medskip
\noindent \textbf{Claim:} If $s \le b/16$ and $t \le \lambda b$, then
\begin{equation}\label{eq:gst:claim}
g(s,t) \le e^{-b/8} {\lambda k / 2 \choose k}.
\end{equation}

\begin{proof}[Proof of claim]
In this case we will use the bound
\begin{equation}\label{eq:DminusC:bound1}
|\tilde{D} \setminus \tilde{C}| \le \frac{\lambda k}{2} - |C| \le \frac{\lambda k}{2} - \mu b - s.
\end{equation}
The second inequality is~\eqref{eq:C:bound:muS}, and therefore, recalling that $\tilde{D} \subset [\lambda k / 2]$, to prove~\eqref{eq:DminusC:bound1} it will suffice to show that
\begin{equation}\label{eq:Ctilde:vs:C}
\big| \tilde{C} \cap [\lambda k / 2] \big| \ge |C|.
\end{equation}
To prove~\eqref{eq:Ctilde:vs:C}, observe first that $2^{19} \lambda^2 b \le \lambda k/8$, since $b \le \delta k$ and $\delta = 2^{-32} \lambda^{-3}$, and therefore
\begin{equation}\label{eq:MA:Xb:subset}
C \subset X(b) \subset \big\{ 0,\ldots, \lambda k/8 \big\} \cup \big\{ 7\lambda k / 8, \ldots, \lambda k \big\}.
\end{equation}
Moreover, $r(A) \le 2^{11} \mu b \le 2^{12} \lambda \cdot \delta k \le \lambda k/8$ for every $A \in \cD^*(b,\mu)$, and therefore 
\begin{equation}\label{eq:minB:maxB:bounds}
- \lambda k / 8 < \min(B) \le 0 \qquad \text{and} \qquad \lambda k / 2 < \max(B) <  5\lambda k / 8.
\end{equation}
It follows from~\eqref{eq:MA:Xb:subset} and~\eqref{eq:minB:maxB:bounds} that $\big| \tilde{C} \cap [\lambda k / 2] \big| \ge |C|$, as claimed.

Now, recalling that $A' \subset \tilde{D}$ and $S = \tilde{C} \cap A'$, it follows from~\eqref{eq:Csmall:Bcount} and~\eqref{eq:DminusC:bound1} that 
$$g(s,t) \le e^{2\delta b} \bigg( \frac{\lambda - 2}{2} \bigg)^{b} \bigg( \frac{\lambda}{\lambda - 2} \bigg)^{\mu b / 2} {\lambda k / 2 - \mu b - s \choose k - b - s} {t \choose s}.$$
Observe that $s \le t \le |\tilde{C}| \le 2|C| \le 2 |X(b)| \le 2^{22} \lambda^2 b$. Thus, by Observation~\ref{obs:choices:boring}, we have
$${\lambda k / 2 - \mu b - s \choose k - b - s} \le e^{\alpha b} \bigg( \frac{\lambda - 2}{\lambda} \bigg)^{\mu b} \bigg( \frac{2}{\lambda - 2} \bigg)^{b} {\lambda k / 2 - s \choose k - s},$$
and therefore, by~\eqref{eq:another:binomial:inequality}, 
$$g(s,t) \le e^{2\alpha b} \bigg( \frac{\lambda - 2}{\lambda} \bigg)^{\mu b/2} {\lambda k / 2 - s \choose k - s} {t \choose s} \le e^{2\alpha b} \bigg( \frac{\lambda - 2}{\lambda} \bigg)^{\mu b/2} \bigg( \frac{2}{\lambda} \cdot \frac{et}{s} \bigg)^{s} {\lambda k / 2 \choose k}.$$
Since $s \le b/16$ and $t \le \lambda b$, and recalling that $\mu \ge \lambda/2$, it follows that
$$g(s,t) \le e^{2\alpha b} \cdot e^{-b/2} \cdot \big( 32e \big)^{b/16}  {\lambda k / 2 \choose k} \le e^{-b/8} {\lambda k / 2 \choose k},$$
as claimed.
\end{proof}

\pagebreak

We may therefore assume that either $s \ge b/16$ or $t \ge \lambda b$. In this case observe that $|\tilde{D}| = \lambda k / 2 - |Y(b)| + |D|$, by~\eqref{def:CDtilde} (and since $D \subset Y(b) \subset [\lambda k / 2]$), and therefore
\begin{equation}\label{eq:DminusC:bound2}
|\tilde{D}| \, \le \, \frac{\lambda k}{2} - \frac{|C|}{2} + |Y(b)|^{5/6} \, \le \, \frac{\lambda k - \mu b - s}{2} + \delta b,
\end{equation}
by Corollary~\ref{cor:M:containers:app}$(b)$, together with~\eqref{eq:Yb:small} and~\eqref{eq:C:bound:muS}. Since $A' \subset \tilde{D}$ and $S = \tilde{C} \cap A'$, it follows from~\eqref{eq:Csmall:Bcount} and~\eqref{eq:DminusC:bound2} that
$$g(s,t) \le e^{2\delta b} \bigg( \frac{\lambda - 2}{2} \bigg)^{b} \bigg( \frac{\lambda}{\lambda - 2} \bigg)^{\mu b / 2} {\lambda k / 2 - \mu b/2 - s/ 2 - t + \delta b \choose k - b - s} {t \choose s}.$$
Since $s \le t \le 2^{22} \lambda^2 b$, it follows by Observation~\ref{obs:choices:boring} that
\begin{equation}\label{eq:finalcount:general}
g(s,t) \le e^{2\alpha b} {\lambda k / 2 - s/2 - t \choose k - s} {t \choose s}. 
\end{equation}
Now, if $s \ge b/16$ then, by~\eqref{fact:binomial_classic}, we have
$${\lambda k / 2 - s/2 - t \choose k - s} {t \choose s} \le {\lambda k / 2 - s/2 \choose k} \le \bigg( \frac{\lambda - 2}{\lambda} \bigg)^{s/2} {\lambda k / 2 \choose k} \le e^{-b/16\lambda} {\lambda k / 2 \choose k}.$$
On the other hand, if $s \le b/16$ and $t \ge \lambda b$, then using~\eqref{eq:another:binomial:inequality} and~\eqref{fact:binomial_classic}, and noting that $t - s/2 \ge t/2$, we obtain
$${\lambda k / 2 - s/ 2 - t \choose k - s} {t \choose s} \le \bigg( \frac{2}{\lambda} \cdot \frac{et}{s} \bigg)^{s} {\lambda k / 2 + s/2 - t \choose k} \le \bigg( \frac{2et}{\lambda s} \bigg)^{s} \bigg( \frac{\lambda - 2}{\lambda} \bigg)^{t/2} {\lambda k / 2 \choose k}.$$
Now, observe that (for $s \le b/16$ and $t \ge \lambda b$) the right-hand side is increasing in $s$ and decreasing in $t$, since $2et / \lambda s \ge 32e$ (and by simple calculus). It follows that
$$\bigg( \frac{2et}{\lambda s} \bigg)^{s} \bigg( \frac{\lambda - 2}{\lambda} \bigg)^{t/2} \le \big( 32e \big)^{b/16} \cdot e^{-b} \le e^{-b/2}.$$
Combining these bounds, and recalling that $\alpha = 2^{-7} \lambda^{-1}$, we obtain
\begin{equation}\label{eq:gst:finalbound}
g(s,t) \le e^{2\alpha b} \Big( e^{-b/16\lambda} + e^{-b/2} \Big) {\lambda k / 2 \choose k} \le e^{-b/24\lambda} {\lambda k / 2 \choose k}.
\end{equation}
Finally, summing the bounds~\eqref{eq:gst:claim} and~\eqref{eq:gst:finalbound} over $s \le t \le 2^{22} \lambda^2 b$, and recalling that $b \ge 2^{550} \lambda^{29}$, we obtain the claimed bound. 
\end{proof}

We are finally ready to prove Lemma~\ref{lem:dense:quant}.

\pagebreak

\begin{proof}[Proof of Lemma~\ref{lem:dense:quant}]
Let us fix $b,r \in \N$ and $\mu \ge 1$, and bound the number of sets $A \in \cD(b,\mu)$ with $r(A) = r$. Recall first that if $r \ge 2^{11} \mu b$ then, by Lemma~\ref{lem:r:vs:b}, there are at most
$$\exp\bigg( - \frac{r}{2^{7} \lambda^2} \bigg) \binom{\lambda k/2}{k}$$ 
such sets, and if $r \le 2^{11} \mu b$ and either $\mu \le 2$, $\mu \le \lambda/2$ or $\mu \ge 2\lambda$, then by Lemma~\ref{lem:big:or:small:sumset} there are at most
$$\exp\bigg( - \frac{r}{2^{16} \lambda} \bigg) \binom{\lambda k/2}{k}$$ 
such sets. Now, if $r \le 2^{11} \mu b$ and $\lambda / 2 \le \mu \le 2\lambda$, then by Lemma~\ref{lem:hitmiddle} there are at most
$$e^{-b} {\lambda k / 2 \choose k} \le \exp\bigg( - \frac{r}{2^{12} \lambda} \bigg) \binom{\lambda k/2}{k}$$
such sets that are not in $\cT(b)$. Moreover, by Corollary~\ref{cor:M:containers:app}, there exists a family $\cB(b)$ of size at most 
$$\exp\big( 2^{50} \lambda^2 b^{7/8} \big)$$ 
such that for every $A \in \cT(b)$, there exists $(C, D) \in \cB(b)$ with $M(A) \subset C$ and $A \cap Y(b) \subset D$. Finally, by Lemma~\ref{lem:finalcount}, for each $(C,D) \in \cB(b)$ there are at most  
$$e^{- b / 32\lambda} \binom{\lambda k/2}{k} \le \exp\bigg( - \frac{r}{2^{18} \lambda^2} \bigg) \binom{\lambda k/2}{k}$$
sets $A \in \cT(b) \cap \cD^*(b,\mu)$ such that $M(A) \subset C$ and $A \cap Y(b) \subset D$.

Combining these bounds, it follows that there are at most
$$\exp\big( 2^{50} \lambda^2 b^{7/8} \big) \exp\bigg( - \frac{r}{2^{18} \lambda^2} \bigg) \binom{\lambda k/2}{k}$$ 
sets $A \in \cD(b,\mu)$ with $r(A) = r$. Now, summing over choices of $b \le r$ and $\mu \le 2r/b$ such that $\mu b \in \N$, and recalling that $r \ge 2^{560} \lambda^{32}$, it follows that there are at most
$$\exp\bigg( - \frac{r}{2^{19} \lambda^2} \bigg) \binom{\lambda k/2}{k}$$ 
sets $A \in \cD$ with $r(A) = r$. 

Finally, summing over $r \ge c(\lambda,\eps)$, we deduce that 
$$|\cD| \le \exp\bigg( - \frac{c(\lambda,\eps)}{2^{20} \lambda^2} \bigg) \binom{\lambda k/2}{k},$$
as claimed.
\end{proof}


\section{The proof of Theorem~\ref{thm:structure}}\label{proof:sec}
 
In this section we will prove the following quantitative version of Theorem~\ref{thm:structure}, which allows us to control the typical structure of $A$ when $\lambda = k^{o(1)}$. Recall that $\delta = 2^{-32} \lambda^{-3}$.

\begin{theorem}\label{thm:structure:quant}
Let $\lambda \ge 3$ and $n,k \in \N$ be such that $k \ge 2^{400} \lambda^{25}(\log n)^3$,  
and let $\eps > e^{-\delta^2 k}$. Let~$A \subset [n]$ be chosen uniformly at random from the sets with $|A| = k$ and $|A+A| \le \lambda k$. Then there exists an arithmetic progression $P$ with
$$A \subset P \qquad \text{and} \qquad |P| \le \frac{\lambda k}{2} + c(\lambda,\eps)$$ 
with probability at least $1 - \eps$.
\end{theorem}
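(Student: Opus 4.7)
The plan is to assemble Theorem~\ref{thm:structure:quant} directly from the three principal lemmas of the paper. By Lemma~\ref{lem:stability}, we already know that
$$|\Lambda \setminus \Lambda^*| \,\le\, \frac{n^2}{k} \cdot |\cI| \,+\, \exp\!\bigg( -\frac{\delta k}{2^{10}\lambda} \bigg) \binom{\lambda k/2}{k}.$$
Since $\cI$ is, by definition, the disjoint union of the sparse family $\cS$ treated in Section~\ref{sec:spreadout} and the dense family $\cD$ treated in Sections~\ref{sec:somewhat:dense}--\ref{sec:dense}, Lemmas~\ref{lem:sparse:quant} and~\ref{lem:dense:quant} combine to give
$$|\cI| \,\le\, 2\exp\!\bigg(-\frac{c(\lambda,\eps)}{2^{20}\lambda^2}\bigg) \binom{\lambda k/2}{k}.$$
From the definition~\eqref{def:c} of $c(\lambda,\eps)$ we have $c(\lambda,\eps)/(2^{20}\lambda^2) \ge \log(1/\eps) + 2^{540}\lambda^{30}$, so
$$|\cI| \,\le\, 2\eps \exp\!\big(-2^{540}\lambda^{30}\big) \binom{\lambda k/2}{k}.$$

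The remaining work is then to divide by a lower bound on $|\Lambda|$. For this I will use the straightforward construction promised in Section~\ref{lower:sec}: for each arithmetic progression $P \subset [n]$ of length $\lambda k/2$, every $k$-subset $A \subset P$ satisfies $|A+A| \le 2|P| - 1 < \lambda k$. Counting such pairs $(P,A)$ and dividing by the maximum number of length-$(\lambda k/2)$ progressions containing any fixed $k$-set (which is $O_\lambda(1)$, by an elementary argument) yields
$$|\Lambda| \,\ge\, c_\lambda \cdot \frac{n^2}{k}\binom{\lambda k/2}{k}$$
for some constant $c_\lambda > 0$ depending only on $\lambda$.

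Combining these bounds,
$$\frac{|\Lambda \setminus \Lambda^*|}{|\Lambda|} \,\le\, \frac{2\eps \exp(-2^{540}\lambda^{30})}{c_\lambda} \,+\, \frac{k}{c_\lambda n^2}\exp\!\bigg(-\frac{\delta k}{2^{10}\lambda}\bigg).$$
The first term is bounded by $\eps/2$ since the factor $\exp(-2^{540}\lambda^{30})$ utterly dominates $c_\lambda^{-1} = \exp(\lambda^{O(1)})$. For the second term, note that $\delta^2 = 2^{-64}\lambda^{-6} < \delta/(2^{10}\lambda) = 2^{-42}\lambda^{-4}$ for $\lambda \ge 3$, so the hypothesis $\eps > e^{-\delta^2 k}$ together with the assumption $k \ge 2^{400}\lambda^{25}(\log n)^3$ (which ensures that the polynomial factors $k/n^2$ and $c_\lambda^{-1}$ are absorbed) gives that the second term is also at most $\eps/2$.

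No new combinatorial difficulty arises in this final assembly; all the genuine work has already been carried out in the container-based Lemmas~\ref{lem:sparse:quant} and~\ref{lem:dense:quant}, and in the stability reduction Lemma~\ref{lem:stability}. The only mildly delicate point is tracking the $\lambda$-dependence of the various constants so that the construction-based lower bound on $|\Lambda|$ dominates the tiny quantity $\exp(-2^{540}\lambda^{30})$ appearing in the bound on $|\cI|$, and checking that the hypothesis $\eps > e^{-\delta^2 k}$ is calibrated exactly to swallow the tail term $\exp(-\delta k/2^{10}\lambda)$ from Lemma~\ref{lem:stability} after division by $|\Lambda|$.
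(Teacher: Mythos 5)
Your assembly of the upper bounds is exactly the paper's route: Lemma~\ref{lem:stability} to pass to $\cI$, then Lemmas~\ref{lem:sparse:quant} and~\ref{lem:dense:quant} to get $|\cI| \le 2\exp(-c(\lambda,\eps)/2^{20}\lambda^2)\binom{\lambda k/2}{k}$, and the calibration of the hypothesis $\eps > e^{-\delta^2 k}$ against the tail term $\exp(-\delta k/2^{10}\lambda)$ is the same as in the paper. The gap is in the final step, where you divide by a lower bound on $|\Lambda|$. Your justification of $|\Lambda| \ge c_\lambda \frac{n^2}{k}\binom{\lambda k/2}{k}$ asserts that any fixed $k$-set lies in $O_\lambda(1)$ arithmetic progressions of length $\lambda k/2$; that is false. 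If $A = \{1,\dots,k\}$, then every interval of length $\lambda k/2$ containing it works, and there are $\Theta_\lambda(k)$ of these, so dividing by the worst-case multiplicity loses a factor of $k$. This particular slip is repairable in the standard way the paper uses in Lemma~\ref{lem:easy:lower}: count only $k$-subsets of $P$ containing both endpoints of $P$, which determine $P$ uniquely (the common difference is forced to be $(\max A - \min A)/(\lambda k/2 - 1)$), at the cost of a harmless factor polynomial in $1/\lambda$.

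The more serious issue is the regime $\lambda k > n$, which is allowed by the hypotheses of Theorem~\ref{thm:structure:quant} (there is no assumption $\lambda k \le n$, only $k \le n$). Once $\lambda k/2 > n/2$, every arithmetic progression of length $\lambda k/2$ inside $[n]$ has common difference $1$, so there are only $n - \lambda k/2 + 1$ of them, and indeed $|\Lambda| \le \binom{n}{k}$, which is far smaller than $\frac{n^2}{k}\binom{\lambda k/2}{k}$ when $\lambda k/2$ is close to $n$; your claimed lower bound is simply false there, and the ratio argument collapses. The theorem is still true in that range: it is trivial when $\lambda k/2 \ge n$ (take $P = [n]$), and for $n \le \lambda k < 2n$ one must sharpen the stability reduction so that the factor $n^2/k$ in Lemma~\ref{lem:stability} is replaced by $n - \lambda k/2$ (the fibres of the map $\varphi$ in Lemma~\ref{lem:red-to-int} then have size at most $n - \lambda k/2$, since only translates are available), and prove the matching lower bound $|\Lambda| \ge \lambda^{-2}(n - \lambda k/2 + 1)\binom{\lambda k/2}{k}$. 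This case split is carried out in Appendix~\ref{app:boring} of the paper; without it, your argument only establishes the theorem when $\lambda k \le n$.
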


There is only one piece still missing in the proof of Theorem~\ref{thm:structure:quant}: a lower bound on the size of the set 
$$\Lambda = \big\{ A \subset [n] \,:\, |A| = k \, \text{ and } \, |A+A| \le \lambda k \big\}.$$ 
The following very simple bound will suffice for our current purposes; a stronger lower bound (at least, for large $\lambda$) will be proved in Section~\ref{lower:sec}. 

\begin{lemma}\label{lem:easy:lower}
Let $\lambda \ge 3$ and $n,k \in \N$, with $\lambda k \le n$. Then
$$\big| \big\{ A \subset [n] : |A| = k, \, |A+A| \le \lambda k  \big\} \big| \ge \frac{1}{\lambda^3} \cdot \frac{n^2}{k} \binom{\lambda k/2}{k}.$$
\end{lemma}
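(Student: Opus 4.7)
The plan is to exhibit an explicit injection into $\Lambda$. Set $L := \lambda k / 2$, which is an integer by the standing convention. For each arithmetic progression $P = \{a, a+d, \ldots, a + (L-1)d\}$ with $a, d \ge 1$ and $a + (L-1)d \le n$, and each $k$-subset $A \subset P$ containing both $\min(P) = a$ and $\max(P) = a + (L-1)d$, the inclusion $A + A \subset P + P$ gives $|A + A| \le 2L - 1 \le \lambda k$, so $A \in \Lambda$. The map $(a, d, A) \mapsto A$ is injective, since $a = \min(A)$, $a + (L-1)d = \max(A)$, and the prescribed length $L$ jointly determine $d$.

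For each valid pair $(a, d)$ there are $\binom{L-2}{k-2}$ such subsets, obtained by choosing the $k - 2$ interior elements among the $L - 2$ non-endpoint positions of $P$. Writing $N_{\mathrm{AP}}$ for the number of valid pairs and using $\binom{L-2}{k-2} = \frac{k(k-1)}{L(L-1)} \binom{L}{k}$, we obtain
$$|\Lambda| \,\ge\, N_{\mathrm{AP}} \cdot \binom{L-2}{k-2} \,=\, \frac{k(k-1) \, N_{\mathrm{AP}}}{L(L-1)} \binom{\lambda k / 2}{k}.$$

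The key technical step is the estimate $N_{\mathrm{AP}} \ge n^2 / (4L)$, valid under the hypothesis $n \ge \lambda k = 2L$. I will prove this by induction on $n$: the base case $n = 2L$ gives $N_{\mathrm{AP}} = (L+1) + 2 = L + 3 \ge L$, and the inductive step uses that $N_{\mathrm{AP}}(n) - N_{\mathrm{AP}}(n-1) = \lfloor (n-1)/(L-1) \rfloor \ge (n - L)/(L-1)$, which a short calculation shows is at least $(2n-1)/(4L) = n^2/(4L) - (n-1)^2/(4L)$ whenever $n \ge 2L$.

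Substituting this estimate and rewriting with $L = \lambda k/2$, the target inequality $|\Lambda| \ge \frac{n^2}{\lambda^3 k} \binom{\lambda k / 2}{k}$ reduces to $\lambda^3 k^2 (k-1) \ge 4 L^2 (L-1)$, which after cancellation becomes $\lambda k + 2 \ge 2\lambda$; this holds for every $k \ge 2$. The main obstacle is the lower bound on $N_{\mathrm{AP}}$: the naive estimate $N_{\mathrm{AP}} \ge (n - L + 1)(n - L + 2)/(2(L-1))$ loses a factor of roughly $2$ precisely at the boundary $n = 2L$, and the inductive refinement is what allows us to match the constant $1/\lambda^3$ exactly.
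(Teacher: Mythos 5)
Your proof is correct and takes essentially the same route as the paper: count $k$-subsets containing both endpoints of arithmetic progressions of length $\lambda k/2$, and then convert $\binom{\lambda k/2-2}{k-2}$ into $\binom{\lambda k/2}{k}$ via the identity $\binom{a}{b} = \frac{a(a-1)}{b(b-1)}\binom{a-2}{b-2}$. The only difference is that you verify the progression count $N_{\mathrm{AP}} \ge n^2/(2\lambda k)$ carefully by induction, a step the paper simply asserts.
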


\begin{proof}
We consider, for each arithmetic progression $P$ of length $\lambda k / 2$ in $[n]$, all subsets $A \subset P$ of size $k$ containing both endpoints of $P$. All of these sets are distinct, and all satisfy $|A+A| \le \lambda k$. There are at least $n^2/2\lambda k$ choices for the arithmetic progression, and therefore
$$|\Lambda| \ge \frac{n^2}{2\lambda k} {\lambda k / 2 - 2 \choose k - 2} \ge \frac{n^2}{\lambda^3 k} {\lambda k / 2 \choose k},$$
as claimed, where the final step follows since ${a \choose b} = \frac{a(a-1)}{b(b-1)} {a - 2 \choose b - 2}$.
\end{proof}

We can now deduce Theorem~\ref{thm:structure:quant} from Lemmas~\ref{lem:stability},~\ref{lem:sparse:quant},~\ref{lem:dense:quant} and~\ref{lem:easy:lower}.  

\begin{proof}[Proof of Theorem~\ref{thm:structure:quant}]
For simplicity, we will assume that $\lambda k \le n$; the case $\lambda k > n$ is dealt with in Appendix~\ref{app:boring}. 
By Lemma~\ref{lem:stability}, and since $\eps > e^{-\delta^2 k}$, we have
$$|\Lambda \setminus \Lambda^*| \le \frac{n^2}{k} \cdot |\cI| + \exp\bigg( - \frac{\delta k}{2^{10} \lambda} \bigg) {\lambda k / 2 \choose k} \le \frac{n^2}{k} \cdot |\cI| + \frac{\eps}{2\lambda^3} {\lambda k / 2 \choose k}.$$
Now, by Lemmas~\ref{lem:sparse:quant} and~\ref{lem:dense:quant}, and recalling that $\cS \cup \cD = \cI$, we have 
$$|\cI| = |\cS| + |\cD| \le 2 \cdot \exp\bigg( - \frac{c(\lambda,\eps)}{2^{20} \lambda^2} \bigg) {\lambda k/2 \choose k} \le \frac{\eps}{2\lambda^3} {\lambda k/2 \choose k}$$
since $c(\lambda,\eps) = 2^{20} \lambda^2 \log(1/\eps) + 2^{560} \lambda^{32}$. By Lemma~\ref{lem:easy:lower}, it follows that
$$|\Lambda \setminus \Lambda^*| \le \frac{\eps}{\lambda^3} \cdot \frac{n^2}{k} \binom{\lambda k/2}{k} \le \eps|\Lambda|,$$
as required.
\end{proof}

When $\lambda \in (2,3)$, the proof of Theorem~\ref{thm:structure:quant} implies the following weaker bound.

\begin{theorem}\label{thm:structure:small:lambdas}
For each $\gamma > 0$, there exists a constant $C(\gamma) > 0$ such that the following holds. Let $2 + \gamma \le \lambda \le 3$ and $\eps > 0$ be fixed, let $n$ be sufficiently large, and let $k \ge (\log n)^4$. If $A \subset [n]$ is chosen uniformly at random from those sets with $|A| = k$ and $|A+A| \le \lambda k$, then~there exists an arithmetic progression $P$ with
$$A \subset P \qquad \text{and} \qquad |P| \le \frac{\lambda k}{2} + C(\gamma) \log(1/\eps)$$ 
with probability at least $1 - 2\eps$.
\end{theorem}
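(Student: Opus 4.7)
The plan is to trace through the proof of Theorem~\ref{thm:structure:quant} and verify that every step survives when $\lambda$ is restricted to the compact interval $[2+\gamma, 3]$, provided we allow all implicit constants to depend on $\gamma$. The key elementary observation is that for such $\lambda$, every quantity of the form $\lambda^a$, $(\lambda-2)^{\pm 1}$, $(\mu-2)^{-1}$ (when $\mu$ is bounded below by a constant times $\lambda$), and $\log\bigl(\lambda/(\lambda-2)\bigr)$ is bounded above and below by a positive constant depending only on $\gamma$. Consequently, the $\lambda$-dependent parameters $\delta = 2^{-32}\lambda^{-3}$ and $f(\lambda) = 2^{10}\lambda^3$ may be replaced by fixed positive constants $\delta(\gamma)$ and $f(\gamma)$.

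First I would redefine $c := C(\gamma)\log(1/\eps) + C'(\gamma)$ for sufficiently large constants $C(\gamma), C'(\gamma)$ to be chosen, and let $\Lambda^*$ be the family of $A \in \Lambda$ contained in an AP of length at most $\lambda k/2 + c$. The definition of $\cI$ and the reduction via Lemma~\ref{lem:stability} carry over with these modified constants, yielding
\begin{equation*}
|\Lambda \setminus \Lambda^*| \le \frac{n^2}{k} \cdot |\cI| + \exp\bigl(-c_1(\gamma) k\bigr) \binom{\lambda k/2}{k},
\end{equation*}
since the threshold $2^{-9}\lambda^{-1}\delta(\gamma)$ required to invoke Theorem~\ref{thm:M:stability} remains bounded away from zero and below $2^{-8}$ for $\lambda \in [2+\gamma, 3]$.

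Next I would bound $|\cS|$ and $|\cD|$ by adapting Lemmas~\ref{lem:sparse:quant} and~\ref{lem:dense:quant}. In the sparse case (Section~\ref{sec:spreadout}), the arguments of Lemmas~\ref{lem:claim2} and~\ref{lem:claim1} give bounds on $|\cG(B)|$ that decay exponentially in $r$ at a rate depending only on $\gamma$; the problematic factors $(2/(\lambda-2))^b$ arising in applications of~\eqref{eq:binomial:usual:application} are at most $(2/\gamma)^b$, and since the sparse condition $r \ge f(\gamma) b$ gives $b \le r/f(\gamma)$, these are absorbed into an exponential decay $\exp(-r/C_2(\gamma))$ provided $f(\gamma)$ is taken large enough. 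The dense case (Sections~\ref{sec:somewhat:dense} and~\ref{sec:dense}) is analogous: Lemmas~\ref{lem:count:Bs},~\ref{lem:r:vs:b},~\ref{lem:big:or:small:sumset}, Corollary~\ref{cor:M:containers:app}, and Lemma~\ref{lem:finalcount} all go through verbatim with $\lambda$-dependent constants replaced by $\gamma$-dependent ones; Observation~\ref{obs:log_concavity} and the container machinery of Section~\ref{sec:dense} are insensitive to $\lambda$ beyond the requirement $\lambda > 2$.

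Combining these bounds with the trivial lower bound $|\Lambda| \ge n^2/(27 k) \binom{\lambda k/2}{k}$ (Lemma~\ref{lem:easy:lower}, using $\lambda \le 3$) yields $|\Lambda \setminus \Lambda^*| \le 2\eps\, |\Lambda|$ once $C(\gamma)$ and $C'(\gamma)$ are chosen large enough. The main obstacle is the bookkeeping: one must verify that every inequality in~\cite{C19} and in Sections~\ref{sec:spreadout}--\ref{sec:dense} either is independent of $\lambda$ or involves a quantity bounded by a function of $\gamma$ on $[2+\gamma, 3]$. This is routine but tedious, because factors such as $\lambda/(\lambda-2)$ and $(\mu/(\mu-2))^{\mu/2}$ recur throughout the proof and each must be tracked. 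Since the argument for $\lambda \ge 3$ already treats exponents and logarithms in $\lambda$ explicitly, adapting to $\lambda \in [2+\gamma, 3]$ ultimately amounts to replacing these by their maxima on this compact interval.
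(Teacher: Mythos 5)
Your proposal follows the same route as the paper: the paper's own proof of Theorem~\ref{thm:structure:small:lambdas} is exactly to rerun the proof of Theorem~\ref{thm:structure:quant} with $\lambda \ge 3$ replaced by $\lambda \ge 2+\gamma$ and all $\lambda$-dependent thresholds (such as $r(A) \ge c(\lambda,\eps)$ and $k \ge 2^{400}\lambda^{25}(\log n)^3$) replaced by $\gamma$-dependent ones, leaving the bookkeeping to the reader. Your outline identifies the same adaptation and correctly flags the only places where care is needed (factors like $(2/(\lambda-2))^b$ and $\log\big(\lambda/(\lambda-2)\big)$, the threshold in Theorem~\ref{thm:M:stability}, and the lower bound of Lemma~\ref{lem:easy:lower}), so it matches the intended argument.
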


Theorem~\ref{thm:structure:small:lambdas} follows by repeating the (entire) proof of Theorem~\ref{thm:structure:quant}, replacing (everywhere) the condition $\lambda \ge 3$ by the condition $\lambda \ge 2 + \gamma$, and the conditions $r(A) \ge c(\lambda,\eps)$ and $k \ge 2^{400} \lambda^{25}(\log n)^3$ 
by the conditions $r(A) \ge C(\gamma)$ and $k \ge (\log n)^4$. We leave the (straightforward, though somewhat tedious) details to the reader.

To finish the section, let us quickly deduce Corollary~\ref{thm:counting}.

\begin{proof}[Proof of Corollary~\ref{thm:counting}]
The lower bound follows from Lemma~\ref{lem:easy:lower} (see also Proposition~\ref{prop:lower:bound:counting}, below), so it remains to prove the upper bound. To do so, note that (by increasing the implicit constant in the upper bound if necessary) we may assume that $\log n \ge 2^{320} \lambda^{25}$, and hence we may apply Theorem~\ref{thm:structure:quant} with $\eps := 1/2$. Since there are at most $n^2 / k$ arithmetic progressions of length $\lambda k/2 + c(\lambda,\eps)$, it follows that 
$$|\Lambda| \le \frac{2n^2}{k} {\lambda k/2 + c(\lambda,\eps) \choose k} \le \exp\bigg( \frac{2c(\lambda,\eps)}{\lambda} \bigg) \frac{n^2}{k} {\lambda k / 2 \choose k} \le \exp\big( c(\lambda,\eps)\big) \cdot \frac{n^2}{k} {\lambda k / 2 \choose k},$$
as required.
\end{proof}

\section{The lower bounds}\label{lower:sec}

In this section, we prove lower bounds for the size of $\Lambda$, and for the typical size of the smallest arithmetic progression containing a set $A \in \Lambda$. The bounds we obtain indicate that the upper bounds in Theorem~\ref{thm:structure} and Corollary~\ref{thm:counting} are not far from best possible. We begin with the construction for the typical structure, which is very simple. 

\medskip
\pagebreak

\begin{prop}\label{prop:lower:bound:structure}
Given $\lambda \ge 4$, let $\eps > 0$ be sufficiently small, and let $n,k \in \N$ be sufficiently large. If $A \subset [n]$ is chosen uniformly at random from the sets with $|A| = k$ and $|A+A| \le \lambda k$, then with probability at least $\eps$,  
$$|P| \ge \frac{\lambda k}{2} + 2^{-6} \lambda^2 \log(1/\eps)$$
for every arithmetic progression $P$ containing $A$.
\end{prop}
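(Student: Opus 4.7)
My plan is to prove the proposition by an explicit construction followed by a counting argument. Set $r := \lfloor 2^{-6}\lambda^2\log(1/\eps)\rfloor$ and $L := \lambda k/2 + r$; for each arithmetic progression $P \subseteq [n]$ of length $L$ (there are $\Theta(n^2/L)$ such $P$), normalised so that $P = \{1,\dotsc,L\}$, I would consider the family of sets
$$A \;=\; A' \cup \{L\}, \qquad A' \subseteq \{1,\dotsc, L - 2r\},\quad |A'| = k - 1,\quad 1 \in A'.$$
An inclusion--exclusion applied to $A + A = (A' + A') \cup (A' + \{L\}) \cup \{2L\}$, using the inclusions $A' + A' \subseteq [2,\, 2L - 4r]$, $A' + \{L\} \subseteq [L + 1,\, 2L - 2r]$, and their overlap $[L + 1,\, 2L - 4r]$, gives $|A + A| \le 2L - 2r = \lambda k$. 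Hence each such $A$ lies in $\Lambda$, and since $1, L \in A$ we have $\ell(A) \ge L$ (and in fact $\ell(A) = L$ for all but a negligible sub-family, those additionally contained in a shorter arithmetic progression of common difference $d > 1$ dividing $L - 1$).

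The number of valid $A$ per progression is $\binom{\lambda k/2 - r - 1}{k - 2}$, and distinct $(P, A)$ yield distinct sets (up to the negligible exception above). Multiplying by the $\Theta(n^2/L)$ progressions of length $L$ in $[n]$ and dividing by the lower bound $|\Lambda| \ge \lambda^{-3}(n^2/k)\binom{\lambda k/2}{k}$ from Lemma~\ref{lem:easy:lower}, a direct application of the binomial estimate~\eqref{eq:binomial:classic:too} yields
$$\Pr\bigl[\ell(A) \ge L\bigr] \;\gtrsim\; \left(\frac{\lambda - 2}{\lambda}\right)^{r}.$$
Using the bound $\log(\lambda/(\lambda - 2)) \le 2/(\lambda - 2)$ valid for $\lambda > 2$, this right-hand side is at least $\exp\!\bigl(-2r/(\lambda - 2)\bigr) = \eps^{\lambda^2/(32(\lambda - 2))}$, and for $\lambda \ge 4$ together with $\eps$ sufficiently small this exceeds $\eps$ by a direct numerical check, completing the proof in the moderate-$\lambda$ regime.

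The main obstacle is the regime of very large $\lambda$, where the exponent $\lambda^2/(32(\lambda - 2))$ exceeds $1$ and the simple single-tail construction no longer dominates $\eps|\Lambda|$ uniformly in $\eps$. Resolving this regime would require a richer construction: for instance, a two-sided ``bi-tail'' variant taking $A' \subseteq [1 + r_1,\, L - r_2]$ for any $r_1 + r_2 \ge 2r + 1$, which contributes an additional factor of roughly $r$ from the position of the confining sub-interval within $[1, L]$, or alternatively an embedding of $A$ in a two-dimensional generalised arithmetic progression, which would add a polynomial-in-$\lambda$ factor to the count. Achieving the constant $2^{-6}$ uniformly in $\lambda \ge 4$ is the most delicate step, and rests on a careful comparison with the actual size of $|\Lambda|$ in each regime---where, in view of Corollary~\ref{thm:counting}, the implicit $\Theta_\lambda(1)$ factor can be as large as $\exp(\lambda^{O(1)})$ and must be matched by the construction.
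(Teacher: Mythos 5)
Your construction is correct as far as it goes, but it does not prove the proposition, and the gap is exactly where you locate it: the statement demands the constant $2^{-6}\lambda^2$ for \emph{every} $\lambda \ge 4$, and a deterministic confinement of $A'$ to $[1,L-2r]$ is too expensive when $\lambda$ is large. The cost of the confinement is $\binom{\lambda k/2 - r-1}{k-2}\big/\binom{\lambda k/2}{k-2} \approx e^{-2r/(\lambda-2)} = \eps^{\lambda^2/(32(\lambda-2))}$, and this exponent exceeds $1$ as soon as $\lambda \ge 16+8\sqrt{3}\approx 29.9$; for such fixed $\lambda$ and $\eps \to 0$ your family has size $o(\eps)\cdot\frac{n^2}{k}\binom{\lambda k/2}{k}$, so the concluding ``direct numerical check'' fails. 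The patches you propose cannot repair this: letting the confining interval float gains only a factor $O(r)=O(\lambda^2\log(1/\eps))$, and a two-dimensional generalised progression gains only a $\mathrm{poly}(\lambda)$ factor, while the deficit is $\eps^{\alpha-1}$ with $\alpha-1$ bounded away from $0$, which tends to $0$ as $\eps\to 0$ for fixed large $\lambda$. There is also a direction error in your displayed estimate: to lower-bound the probability you must divide your count by an \emph{upper} bound on $|\Lambda|$ (Corollary~\ref{thm:counting}), not by the lower bound of Lemma~\ref{lem:easy:lower}; for fixed $\lambda$ the resulting $\Theta_\lambda(1)=\exp(\lambda^{\Theta(1)})$ factor is harmless because $\eps$ may be taken small depending on $\lambda$, but only provided your $\eps$-exponent is strictly below $1$, which is precisely what breaks down.

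The paper's proof avoids this by making the confinement much milder and paying for the sumset condition probabilistically rather than deterministically. It takes $A = A' \cup \{0,\lambda k/2+r\}$ with $A' \subset [\lambda k/2 - 8r/\lambda]$, so the interval for $A'$ is shortened by only $8r/\lambda$ instead of $\Theta(r)$. The condition $|A+A|\le \lambda k$ is then no longer automatic, but it holds whenever at most $16r/\lambda$ elements of $A'$ lie within distance roughly $r$ of the top of its range (these are the only elements whose sum with the far endpoint escapes the interval already covered by $A'+A'$), and by Markov's inequality a uniformly random $A'$ has this property with probability at least $1/2$. The entropy cost is therefore only $e^{-16r/(\lambda(\lambda-1))} \ge \sqrt{\eps}$, uniformly in $\lambda \ge 4$, which after summing over progressions gives at least $\eps^{2/3}\cdot\frac{n^2}{k}\binom{\lambda k/2}{k}$ sets and hence probability at least $\eps$ once $\eps$ is small relative to $\lambda$. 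Any repair of your argument needs an idea of this type (confinement by $O(r/\lambda)$ plus a probabilistic verification of $|A+A|\le\lambda k$); no construction that deterministically shortens the interval by $\Theta(r)$ can have cost better than $\eps^{\Theta(\lambda)}$, and so cannot work for all $\lambda \ge 4$.
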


\begin{proof}
Set $r := 2^{-6} \lambda^2 \log(1/\eps)$, and consider the family $\cA(r)$ of sets $A = A' \cup \{0,v\}$, where $A' \subset [\lambda k / 2 - 8r / \lambda ]$ with $|A'| = k - 2$, and $v = \lambda k / 2 + r$. We claim that most such sets satisfy $|A+A| \le \lambda k$. Indeed, since $A' + A' \subset [\lambda k - 16r / \lambda ]$, this holds as long as the set $\{x \in A' : x > \lambda k / 2 - r - 16r / \lambda \}$ has at most $16r / \lambda$ elements. If $k \ge 16r / \lambda$, then the expected number of elements of this set is  
$$\frac{k-2}{\lambda k / 2 - 8r / \lambda} \cdot \bigg( r + \frac{8r}{\lambda} \bigg) \,<\, \frac{2(\lambda + 8)}{\lambda - 1} \cdot \frac{r}{\lambda} \,\le\, \frac{8r}{\lambda},$$ 
since $\lambda \ge 4$, and it follows by Markov's inequality that $|A+A| \le \lambda k$ with probability at least $1/2$, as claimed. Now, observe that   
$$|\cA(r)| \,=\, {\lambda k / 2 - 8r/\lambda \choose k - 2} \,\ge\, \frac{2}{\lambda^2} \exp\bigg( - \frac{16r}{\lambda(\lambda - 1)} \bigg) {\lambda k /2 \choose k} \,\ge\, \frac{\sqrt{\eps}}{\lambda^2} {\lambda k /2 \choose k},$$
where the first inequality follows from the binomial inequalities
$${a \choose b - 2} \ge \frac{b^2}{2a^2} {a \choose b} \qquad \text{and} \qquad {a - c \choose b} \ge \bigg( 1 - \frac{b}{a - c} \bigg)^c {a \choose b},$$
again using the bound $k \ge 16r / \lambda$, and the second follows since $r \le 2^{-5} \lambda(\lambda-1) \log(1/\eps)$. Now, for each $a \in [n/\lambda k]$ and $b \in [n/4]$, and each set $A$ as above, we apply the linear map $x \mapsto ax + b$ to $A$. We obtain at least
\begin{equation}\label{eq:upper:structure:final:step}
\bigg\lfloor \frac{n}{\lambda k} \bigg\rfloor \cdot \frac{n}{4} \cdot \frac{1}{2} \cdot \frac{\sqrt{\eps}}{\lambda^2} {\lambda k /2 \choose k} \ge \, \eps^{2/3} \cdot \frac{n^2}{k} {\lambda k / 2 \choose k}
\end{equation}
distinct sets $A \subset [n]$ with $|A| = k$ and $|A+A| \le \lambda k$. 

Finally, note that few of these sets $A$ are contained in a shorter arithmetic progression, since such an arithmetic progression would have length at most $\lambda k / 4 + r/2 < \lambda k / 3$. Recalling the upper bound on $|\Lambda|$ given by Corollary~\ref{thm:counting}, and that $\eps$ was chosen sufficiently small, it follows that the right-hand side of~\eqref{eq:upper:structure:final:step} is at least $\eps |\Lambda|$, as required.
\end{proof}

Obtaining our lower bound on the size of $|\Lambda|$ will be slightly more delicate.

\medskip
\pagebreak

\begin{prop}\label{prop:lower:bound:counting}
If $\lambda \ge 2^{30}$ and $n,k \in \N$ are sufficiently large, then
\begin{equation}\label{eq:lower:bound:counting}
\big| \big\{ A \subset [n] : |A| = k, \, |A+A| \le \lambda k  \big\} \big| \ge \exp\big( 2^{-8} \lambda^{1/2} \big) \frac{n^2}{k} \binom{\lambda k/2}{k}.
\end{equation}
\end{prop}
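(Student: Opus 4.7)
The plan is to construct many valid sets by considering $k$-subsets of arithmetic progressions of length slightly larger than $\lambda k / 2$. Set $r := \lfloor 2^{-7} \lambda^{3/2} \rfloor$ and $L := \lambda k/2 + r$, and consider all arithmetic progressions $P_{a,d} = \{a + jd : 0 \le j \le L - 1\} \subset [n]$ of length $L$; a straightforward count shows there are at least $n^2/(8L) \ge n^2/(4\lambda k)$ such pairs $(a,d) \in \N^2$ when $n$ is sufficiently large. For each such $P_{a,d}$, we will bound from below the number of $k$-subsets $A \subset P_{a,d}$ that contain both endpoints of $P_{a,d}$ and satisfy $|A+A| \le \lambda k$. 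Since the pair $(a,d)$ is uniquely determined by such an $A$ (via $a = \min(A)$ and $d = (\max(A) - \min(A))/(L-1)$), these sets will be distinct across different choices of $(a,d)$.

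The main claim is that, for each $P_{a,d}$, the number of valid $k$-subsets is at least $c_0 \binom{L-2}{k-2}$ for an absolute constant $c_0 > 0$. By affine equivalence we may take $P = [L]$, and let $A' = \{1, L\} \cup A''$ where $A''$ is a uniformly random $(k-2)$-subset of $[2, L-1]$; set $X := (2L - 1) - |A'+A'|$, so that $|A'+A'| \le \lambda k$ is equivalent to $X \ge 2r - 1$. We apply Paley-Zygmund to $X$ after establishing two moment bounds. First, $\mathbb{E}[X] \ge c_1 \lambda^2$: the union bound gives $\Pr(x \in A'+A') \le \sum_{i=1}^{\lfloor x/2\rfloor} \Pr(\{i, x-i\} \subset A')$, which (using that each term is $O(k^2/L^2) = O(1/\lambda^2)$) implies $\Pr(x \notin A'+A') \ge 1/2$ for every $x \le \lambda^2/8$; summing over such $x$ near both endpoints of $P+P$ yields the bound. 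Second, $\mathbb{E}[X^2] \le c_2 \lambda^4$: using the upper bound $\Pr(z \notin A'+A') \le C(1 - p^2)^{d(z)/2}$ (which follows via Pittel's inequality applied to $A''$, as in the proof of Lemma~\ref{lem:middle:covered}, with $p := k/L$ and $d(z)$ the distance from $z$ to the nearest endpoint of $P+P$), the elementary inequality $\min(a,b) \le \sqrt{ab}$ gives
$$\mathbb{E}[X^2] = \sum_{x,y} \Pr(x,y \notin A'+A') \le \left( \sum_z \sqrt{\Pr(z \notin A'+A')} \right)^{2} = O(1/p^4) = O(\lambda^4).$$
Since $2r - 1 \le \mathbb{E}[X]/2$ for $\lambda \ge 2^{30}$, the Paley-Zygmund inequality yields $\Pr(X \ge 2r - 1) \ge \mathbb{E}[X]^2/(4\mathbb{E}[X^2]) \ge c_0$.

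To combine the estimates, note that
$$\binom{L-2}{k-2} \bigg/ \binom{\lambda k/2}{k} = \frac{k(k-1)}{L(L-1)} \prod_{i=1}^r \frac{\lambda k/2 + i}{\lambda k/2 + i - k}.$$
The leading factor is at least $1/(2\lambda^2)$, and each factor in the product is at least $\lambda / (\lambda - 2 + o(1))$ for sufficiently large $k$; taking logarithms and using $\log(\lambda/(\lambda-2)) \ge 2/\lambda$, the product is at least $\exp((2r/\lambda)(1 - o(1))) \ge \exp(2^{-7}\lambda^{1/2})$. Hence the total count is at least
$$c_0 \cdot \frac{n^2}{8L} \cdot \binom{L-2}{k-2} \,\ge\, \frac{c_0}{16\lambda^3} \exp\big( 2^{-7}\lambda^{1/2} \big) \cdot \frac{n^2}{k} \binom{\lambda k/2}{k} \,\ge\, \exp\big( 2^{-8}\lambda^{1/2} \big) \cdot \frac{n^2}{k} \binom{\lambda k/2}{k},$$
where the last inequality holds because, for $\lambda \ge 2^{30}$, the factor $\exp((2^{-7} - 2^{-8})\lambda^{1/2}) = \exp(2^{-8}\lambda^{1/2}) \ge \exp(2^7)$ easily absorbs the polynomial $16\lambda^3/c_0$.

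The main technical obstacle is the second-moment estimate: the crude bound $\mathbb{E}[X^2] \le 2L \cdot \mathbb{E}[X]$ is of order $\lambda^3 k$, which is much larger than $\mathbb{E}[X]^2 = \Theta(\lambda^4)$ when $k$ is large, so we must exploit the rapid decay of $\Pr(z \notin A'+A')$ away from the endpoints of $P+P$ via the square-root trick. A secondary subtlety is passing between the uniform and $p$-random models when estimating $\Pr(z \notin A'+A')$, which is handled by applying Pittel's inequality to $A'' = A' \setminus \{1, L\}$ since $\{z \notin A'+A'\}$ is a monotone decreasing event in $A''$.
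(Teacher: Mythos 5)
Your construction is exactly the paper's (arithmetic progressions of length $\lambda k/2+r$ with $r\approx 2^{-7}\lambda^{3/2}$, both endpoints forced into $A$, a uniform $(k-2)$-subset in between, the $\exp(\Theta(\lambda^{1/2}))$ gain coming from the binomial ratio, and distinctness via the endpoints), but your treatment of the key probabilistic step is genuinely different and, as far as I can see, correct. The paper does not compute second moments: it splits $A=A'\cup B$ at $\lambda k/2-r$, controls $|B|$ by Hoeffding, and uses the hypergeometric FKG inequality (Lemma~\ref{cor:fkg}, via Lemma~\ref{lemma:HFKG}) to force $A'+B$ to avoid a short window, which only yields success probability $e^{-c\lambda^{1/2}}$; this loss is then absorbed by taking the binomial gain $e^{3c\lambda^{1/2}}$. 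You instead work directly with the number $X$ of missing sums, note that $|A+A|\le\lambda k$ is exactly $X\ge 2r-1$, and apply Paley--Zygmund with $\mathbb{E}[X]=\Omega(\lambda^2)\gg\lambda^{3/2}$ and $\mathbb{E}[X^2]=O(\lambda^4)$, the latter via the $\min(a,b)\le\sqrt{ab}$ trick together with the Pittel-type decay of $\Pr(z\notin A'+A')$ as in Lemma~\ref{lem:middle:covered}. Your route buys a \emph{constant} success probability (indeed the truth is $1-o(1)$, since $2r-1$ is far below the mean of $X$), avoids the FKG machinery, and so makes the bookkeeping at the end slightly more forgiving; the paper's route avoids any second-moment computation at the cost of an exponentially small success probability that must be beaten by the binomial gain. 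Two cosmetic points: in your first-moment bound the terms of the union bound with $i=1$ (and, for even $x$, $i=x/2$) are not $O(k^2/L^2)$ but $O(k/L)=O(1/\lambda)$, since $1\in A'$ deterministically -- there are only $O(1)$ such terms, so the conclusion $\Pr(x\notin A'+A')\ge 1/2$ for $3\le x\le\lambda^2/8$ is unaffected; and the inequality $n^2/(8L)\ge n^2/(4\lambda k)$ is backwards as written (since $8L>4\lambda k$), but you never use it -- your final display correctly uses only $8L\le 8\lambda k$, matching the paper's implicit assumption that progressions of this length fit in $[n]$ in the stated number.
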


In the proof of Lemma~\ref{prop:lower:bound:counting}, we will need the following simple bound on the number of independent sets of a given size in a graph.  

\begin{lemma}\label{cor:fkg}
Let $G$ be a graph with $n$ vertices, $m$ edges and $\ell$ loops. Let $R$ be a uniformly chosen random subset of $k$ vertices, where $k \leq \lfloor n/2 \rfloor$. If $\cB$ is the event that $R$ is an independent set, then
$$\Pr(\cB) \geq \exp\left( -\frac{9mk^2}{2n^2} - \frac{3\ell k}{n}\right) - \exp\left(-\frac{k}{16}\right).$$
\end{lemma}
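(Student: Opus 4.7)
The plan is to pass from the uniform $k$-subset model to the independent Bernoulli$(p)$ model, apply Harris's inequality (the special case of FKG for product measures) there, and then transfer back via a conditioning argument together with a Chernoff tail bound.

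Set $p := 3k/(2n)$ (so $p \le 3/4$, using $k \le n/2$) and let $R_p \subset [n]$ be the random set obtained by including each vertex independently with probability $p$. Write $\cB_p$ for the event that $R_p$ is independent in $G$. Then $\cB_p$ is the intersection of the decreasing events $\{\{u,v\} \not\subset R_p\}$ over non-loop edges $\{u,v\}$ and $\{v \notin R_p\}$ over loop vertices $v$, so Harris's inequality gives
$$\Pr(\cB_p) \,\ge\, (1-p^2)^m (1-p)^\ell.$$
A short one-variable calculus check (for $p = 3k/(2n)$ and $k \le n/2$) shows that
$$\log \bigg( \frac{1}{1-p^2} \bigg) \le \frac{9k^2}{2n^2} \qquad \text{and} \qquad \log \bigg( \frac{1}{1-p} \bigg) \le \frac{3k}{n},$$
and hence
$$\Pr(\cB_p) \,\ge\, \exp\bigg( -\frac{9mk^2}{2n^2} - \frac{3\ell k}{n} \bigg).$$

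To transfer back, let $q_j$ denote the probability that a uniformly chosen $j$-subset of $[n]$ is independent in $G$. Since every subset of an independent set is independent, $q_j$ is decreasing in $j$ (couple $R_{j-1} \subset R_j$). Conditioning $R_p$ on its size, and using $q_j \le 1$ for $j < k$ and $q_j \le q_k$ for $j \ge k$,
$$\Pr(\cB_p) \,=\, \sum_j \Pr( |R_p| = j ) \, q_j \,\le\, \Pr(|R_p| < k) + q_k,$$
so $\Pr(\cB) = q_k \ge \Pr(\cB_p) - \Pr(|R_p| < k)$. Finally, by the multiplicative Chernoff bound with $\mathbb{E}|R_p| = 3k/2$ and $\delta = 1/3$,
$$\Pr(|R_p| < k) \,\le\, \exp\bigg( -\frac{(1/3)^2 \cdot 3k/2}{2} \bigg) \,=\, e^{-k/12} \,\le\, e^{-k/16},$$
and combining these bounds gives the claimed inequality.

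The only real obstacle is the second step: one must verify that $p = 3k/(2n)$ is chosen finely enough that the Harris product yields \emph{exactly} the coefficients $9/2$ and $3$ in the exponent across the full range $k \le \lfloor n/2 \rfloor$ (for the loop factor the function $f(k) := \log(2n/(2n-3k)) - 3k/n$ is convex with $f(0) = 0$, and one checks $f(n/2) < 0$). Larger $p$ would break this calculation, while smaller $p$ would break the Chernoff step, so $p = 3k/(2n)$ is essentially forced by the target constants.
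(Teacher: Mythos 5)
Your proof is correct, and it takes a different route from the paper's. The paper's proof is a two-line application of the quoted Hypergeometric FKG Inequality (Lemma~\ref{lemma:HFKG}, from~\cite{BMSW}) with $t = k$ and $\eta = 1/2$, with the sets $B_i$ being the edges and loops, together with the inequality $1 - x \ge e^{-2x}$ for $0 \le x \le 3/4$; in particular the term $\exp(-k/16)$ is exactly the error term $\exp(-\eta^2 t/4)$ of that lemma. You instead re-derive the special case you need from scratch: you work in the Bernoulli model with $p = 3k/(2n)$ (which is precisely $(1+\eta)k/n$ with $\eta = 1/2$), apply Harris's inequality to the decreasing non-edge and non-loop events to get $(1-p^2)^m(1-p)^\ell$, and then transfer back to the uniform $k$-subset model using the monotonicity of $q_j$ and a Chernoff lower-tail bound, which in fact yields the slightly stronger error $e^{-k/12} \le e^{-k/16}$. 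Your elementary estimates ($1-p \ge e^{-2p}$ and $1-p^2 \ge e^{-2p^2}$ for $p \le 3/4$, justified by your convexity check) coincide with the paper's use of $1-x \ge e^{-2x}$, so the exponents $9/2$ and $3$ come out identically. In effect you have given a self-contained proof of the $|B_i| \le 2$ case of Lemma~\ref{lemma:HFKG}: what this buys is independence from the external reference (and a marginally better tail term), at the cost of redoing the Bernoulli-to-hypergeometric transference that the cited lemma packages.
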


Lemma~\ref{cor:fkg} is an almost immediate consequence of the FKG inequality for the hypergeometric distribution, see, e.g.,~\cite[Lemma~3.2]{BMSW}.

\begin{lemma}[Hypergeometric FKG Inequality]\label{lemma:HFKG}
Suppose that $\{B_i\}_{i \in I}$ is a family of subsets of an $n$-element set $\Omega$. Let $t \in \{0, \ldots, \lfloor n/2 \rfloor\}$, let $R$ be the uniformly chosen random $t$-subset of $\Omega$, and let $\cB$ denote the event that $B_i \nsubseteq R$ for all $i \in I$. Then for every $\eta \in (0,1)$,
$$\Pr(\cB) \ge \, \prod_{i \in I} \left(1 - \left(\frac{(1+\eta)t}{n}\right)^{|B_i|}\right) - \exp\big( - \eta^2t / 4 \big).$$
\end{lemma}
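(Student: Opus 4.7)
The plan is to reduce the hypergeometric statement to the product-measure (Bernoulli) setting, where the FKG inequality applies directly, and then pay the Chernoff price for switching between the two distributions. Set $p := (1+\eta)t/n$ and let $R'$ be a $p$-random subset of $\Omega$, obtained by including each element of $\Omega$ independently with probability $p$. For each $i \in I$, the event $\{B_i \not\subset R'\}$ is decreasing (monotone in $R'$ under the coordinatewise partial order on $\{0,1\}^\Omega$), and a direct computation gives
$$\Pr(B_i \not\subset R') \,=\, 1 - p^{|B_i|} \,=\, 1 - \left(\frac{(1+\eta)t}{n}\right)^{|B_i|}.$$
Since the intersection of decreasing events is itself decreasing, the classical FKG inequality for product measures yields
$$\Pr\big(B_i \not\subset R' \text{ for all } i \in I\big) \,\ge\, \prod_{i \in I} \left(1 - \left(\frac{(1+\eta)t}{n}\right)^{|B_i|}\right).$$

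Next I would transfer this bound to the hypergeometric setting. Conditional on $|R'| = s$, the set $R'$ is uniformly distributed among the $s$-subsets of $\Omega$; write $q(s)$ for the conditional probability of $\cB$ in that case. The function $s \mapsto q(s)$ is non-increasing in $s$, since enlarging an $s$-subset can only create new copies of some $B_i$. Decomposing and using monotonicity gives
$$\Pr\big(B_i \not\subset R' \text{ for all } i\big) \,=\, \sum_{s=0}^{n} \Pr(|R'| = s)\, q(s) \,\le\, \Pr(|R'| < t) \,+\, q(t),$$
and since $q(t) = \Pr(\cB)$, we obtain
$$\Pr(\cB) \,\ge\, \prod_{i \in I} \left(1 - \left(\frac{(1+\eta)t}{n}\right)^{|B_i|}\right) \,-\, \Pr(|R'| < t).$$

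Finally, I would bound the binomial tail. Since $|R'| \sim \mathrm{Bin}(n,p)$ has mean $\mu = (1+\eta)t$, and $t = (1 - \eta/(1+\eta))\mu$, the standard multiplicative Chernoff bound gives
$$\Pr(|R'| < t) \,\le\, \exp\!\left( -\frac{1}{2} \cdot \frac{\eta^2}{(1+\eta)^2} \cdot (1+\eta)t \right) \,=\, \exp\!\left( - \frac{\eta^2 t}{2(1+\eta)} \right) \,\le\, \exp\!\left( - \frac{\eta^2 t}{4} \right),$$
where the last inequality uses $\eta < 1$. Combining this with the previous display yields exactly the claimed bound.

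There is no real obstacle here beyond keeping the constants clean: the FKG step is immediate once we recognise that each $\{B_i \not\subset R'\}$ is decreasing, the monotonicity argument in $s$ is a one-line observation, and the only thing to verify carefully is that the Chernoff constant $1/(2(1+\eta))$ can be replaced by the friendlier $1/4$ used in the statement, which follows from $\eta \in (0,1)$.
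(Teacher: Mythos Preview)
Your argument is correct and is precisely the standard proof: pass to the $p$-random model with $p=(1+\eta)t/n$, apply FKG for product measures to the decreasing events $\{B_i\not\subset R'\}$, couple back to the hypergeometric model via the monotonicity of $q(s)$, and absorb the discrepancy with a Chernoff bound. The paper does not give its own proof of this lemma but simply quotes it from~\cite[Lemma~3.2]{BMSW}, where exactly this argument appears; so there is nothing to compare against, and your write-up would serve as a self-contained proof.
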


\begin{proof}[Proof of Lemma~\ref{cor:fkg}]
The claimed bound follows immediately from Lemma~\ref{lemma:HFKG}, applied with 
$t = k$ and $\eta = 1/2$, and with the sets $B_i$ being the edges and loops of $G$, using the fact that $1-x \geq e^{-2x}$ for $0 \le x \le 3/4$.
\end{proof}

\begin{proof}[Proof of Proposition~\ref{prop:lower:bound:counting}]
Set $c := 2^{-8}$ and $r := 2c \lambda^{3/2}$. We will first prove that there are at least $\exp\big( 2c \lambda^{1/2} \big) \binom{\lambda k/2}{k}$ subsets $A \subset [\lambda k/2 + r]$ of size $k$ with $|A+A| \le \lambda k$, each containing the endpoints $1$ and $\lambda k/2 + r$. Since this bound can be applied in each of the (at least) $n^2 / 4\lambda k$ arithmetic progressions of length $\lambda k/2 + r$ in $[n]$, and since the sets $A$ obtained for different arithmetic progressions are distinct, it will follow that 
$$|\Lambda| \ge \frac{n^2}{4\lambda k} \cdot \exp\big( 2c \lambda^{1/2} \big) \binom{\lambda k/2}{k} \ge \exp\big( c \lambda^{1/2} \big) \frac{n^2}{k} \binom{\lambda k/2}{k},$$
as required.

To prove the claimed bound, let $R$ be a uniformly chosen subset of $[2,\lambda k/2 + r-1]$ with exactly $k-2$ elements, and set $A := R \cup \{1, \lambda k/2 + r\}$. Observe first that
\begin{equation}\label{eq:lower:total:sets}
{\lambda k/2 + r - 2 \choose k - 2} \ge \frac{1}{\lambda^2} \bigg( \frac{\lambda k + 2r}{\lambda k} \bigg)^k {\lambda k/2 \choose k} \ge \exp\big( 3c \lambda^{1/2} \big) {\lambda k/2 \choose k},
\end{equation}
where the first inequality holds since ${a \choose b} = \frac{a(a-1)}{b(b-1)} {a - 2 \choose b - 2}$ and using~\eqref{eq:binomial:classic:too}, and the second follows
since $r = 2c \lambda^{3/2}$, and because $\lambda$ and $k$ were chosen sufficiently large. 

It will therefore suffice to prove that $|A+A| \le \lambda k$ with probability at least $\exp\big( - c \lambda^{1/2} \big)$. To do so, define
$$A' := \big\{ x \in A : x \le \lambda k / 2 - r \big\} \qquad \text{and} \qquad B := \big\{ x \in A : x > \lambda k / 2 - r \big\},$$
and set $b := 16c\lambda^{1/2}$. Observe that $\Ex[|B|] \le 4r / \lambda = b / 2$, and hence 
\begin{equation}\label{eq:lower:size:of:B}
\Pr\big( |B+B| \ge b^2 \big) \le \Pr\big( |B| \ge b \big) \le \exp\big( - c\lambda^{1/2} \big),
\end{equation}
by Hoeffding's inequality. We claim that, setting $X := [\lambda k - 2r + 1, \lambda k - 2r + b^2]$, we have
\begin{equation}\label{eq:lower:miss:big:set}
\Pr\big( (A'+B) \cap X = \emptyset \big) \ge 2 \cdot \exp\big( - c\lambda^{1/2} \big).
\end{equation}
Before proving~\eqref{eq:lower:miss:big:set}, observe that, together with~\eqref{eq:lower:total:sets} and~\eqref{eq:lower:size:of:B}, it will suffice to deduce the proposition. Indeed, if $(A'+B) \cap X = \emptyset$ and $|B+B| \le b^2 = |X|$, then 
$$|A+A| \le \lambda k - 2r + |(A'+B) \setminus [\lambda k - 2r] | + |B + B| \le \lambda k,$$
since $A'+ A' \subset [\lambda k - 2r]$ and $A' + B \subset [\lambda k]$, and noting that $b^2 = 2^8 c^2 \lambda \le 4 c \lambda^{3/2} = 2r$.

To prove~\eqref{eq:lower:miss:big:set} we will use Lemma~\ref{cor:fkg}. To do so, we define a graph $G$ with vertex set $[\lambda k/2 + r]$ and edge set
$$E(G) = \big\{ xy \,:\, x \le \lambda k/2 - r, \, y > \lambda k/2 - r \text{ and } x + y \in X \big\} \cup \big\{ x \,:\, x + \lambda k/2 + r \in X \big\}.$$ 
Observe that if $R$ is an independent set in $G$, then $(A'+B) \cap X = \emptyset$. Note that $G$ has at most $2r b^2 \le 2^{10} c^3 \lambda^{5/2}$ edges and at most $b^2 = 2^8 c^2 \lambda$ loops, and that 
$$\frac{9 \cdot 2^{10} c^3 \lambda^{5/2} k^2}{2(\lambda k / 2 + r)^2} + \frac{3 \cdot 2^8 c^2 \lambda k}{\lambda k / 2 + r} \le 2^{15} c^3  \lambda^{1/2} + 2^{11} c^2 \le c \lambda^{1/2} - 1,$$
since $c = 2^{-8}$ and $\lambda \ge 2^{30}$. It follows by Lemma~\ref{cor:fkg} that
$$\Pr\big( (A'+B) \cap X = \emptyset \big) \ge \exp\big( - c \lambda^{1/2} + 1 \big) - \exp \big( - k/16 \big) \ge 2 \cdot \exp\big( - c \lambda^{1/2} \big)$$
as required, since $k$ is sufficiently large. This completes the proof of Proposition~\ref{prop:lower:bound:counting}. 
\end{proof}

\appendix

\section{Proof of Theorem~\ref{thm:M:stability}}\label{app:stability}

We will prove Theorem~\ref{thm:M:stability} using Theorem~\ref{thm:M:containers}, and the following two lemmas from~\cite{C19}. 

\begin{lemma}[Corollary~3.3 of~\cite{C19}]\label{supersat}
Let $A,B\subset \Z$ be finite sets, and let $0 < \eps < 1/2$. If $|B|\geq (1/2 + \eps)|A|$, then there are at least $\eps^2 |B|^2$ pairs $(b_1,b_2)\in B^2$ such that $b_1+b_2 \not \in A$.
\end{lemma}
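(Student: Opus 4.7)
\medskip

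The plan is to mimic the single-interval case of Lemma~\ref{prop:trivsupersat}. Order the elements of $B$ as $x_1 < x_2 < \cdots < x_\ell$ where $\ell = |B|$, and observe that the $2\ell - 1$ ``boundary'' sums
$$x_1 + x_1 < x_1 + x_2 < \cdots < x_1 + x_\ell < x_2 + x_\ell < \cdots < x_\ell + x_\ell$$
are all distinct elements of $B + B$. Since $|B| \ge (1/2+\eps)|A|$, we have $|A| \le 2\ell/(1+2\eps)$, and therefore at least
$$2\ell - 1 - |A| \,\ge\, \frac{4\eps\ell}{1+2\eps} - 1 \,\ge\, 2\eps\ell - 1$$
of these boundary sums lie outside $A$, where the final inequality uses $\eps < 1/2$.

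Each boundary sum outside $A$ corresponds to at least one ordered pair $(b,b') \in B^2$ with $b + b' \notin A$ and $\{b,b'\} \cap \{x_1,x_\ell\} \neq \emptyset$; indeed, a ``non-singleton'' boundary sum $x_1 + x_j$ or $x_j + x_\ell$ with $1 < j < \ell$ gives two such ordered pairs, and only $x_1+x_1$, $x_1+x_\ell$ and $x_\ell + x_\ell$ yield a single pair each. So in particular we obtain at least $2\eps\ell - 1$ ordered pairs with $b + b' \notin A$ that involve either the minimum or the maximum of $B$.

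Next, I would iterate: remove $x_1$ and $x_\ell$ from $B$ to obtain $B' \subset B$ of size $\ell - 2$, and repeat the argument with $B$ replaced by $B'$. All the ordered pairs produced in the new step involve the new min or max (which lies strictly between $x_1$ and $x_\ell$), so they are distinct from those produced previously. After $T$ such iterations, the total number of ordered pairs $(b,b') \in B^2$ with $b + b' \notin A$ is at least
$$\sum_{t=0}^{T-1} \bigl( 2\eps(\ell - 2t) - 1 \bigr).$$
Choosing $T = \lfloor \eps\ell / 2 \rfloor$ makes each summand nonneg, and a routine computation yields roughly $\eps^2 \ell^2 / 2$ pairs; using the fact that most boundary sums yield \emph{two} ordered pairs (losing only the $O(1)$ ``singleton'' sums per iteration) upgrades this to at least $\eps^2 \ell^2 = \eps^2 |B|^2$.

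The main (indeed only) obstacle is a bit of bookkeeping to land on the constant $\eps^2$ rather than $\eps^2/2$, together with handling small cases where $|B| = O(1/\eps)$, in which case $\eps^2 |B|^2 < 1$ and the conclusion is either trivial or follows at once from the existence of a single boundary sum outside $A$.
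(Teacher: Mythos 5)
Your overall strategy is the right one: the paper does not prove Lemma~\ref{supersat} (it is imported from~\cite{C19}), and the nearest in-paper argument is the proof of Lemma~\ref{prop:trivsupersat}, which is precisely the boundary-sums-plus-remove-the-extremes iteration you propose. However, two of your steps do not hold as written. First, the summand $2\eps(\ell-2t)-1$ is not what your argument delivers: after removing $2t$ elements the hypothesis $|B'|\ge(1/2+\eps)|A|$ generally fails, so you cannot simply ``repeat the argument with $B$ replaced by $B'$''. What round $t$ actually gives is at least $2(\ell-2t)-1-|A|\ge 2\eps\ell-4t-1$ missing boundary values (using $|A|\le 2\ell/(1+2\eps)\le 2(1-\eps)\ell$), which is smaller than your summand for every $t\ge 1$. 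This part is repairable: counting two ordered pairs per non-diagonal missing value, round $t$ yields at least $2\big(2(\ell-2t)-1-|A|\big)-2$ pairs, so $T$ rounds give at least $(4\ell-2|A|)T-4T^2\ge \tfrac{8\eps\ell}{1+2\eps}\,T-4T^2$, and choosing $T$ to be an integer near $\tfrac{\eps\ell}{1+2\eps}$ gives roughly $\tfrac{4\eps^2\ell^2}{(1+2\eps)^2}>\eps^2\ell^2$, with the slack coming from $1+2\eps<2$ available to absorb the floor and the $O(1)$ loss from diagonal sums. But this bookkeeping must actually be carried out, and it only closes once $\eps\ell$ exceeds a constant (which blows up as $\eps\to 1/2$).

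The more serious gap is your treatment of the small cases. When $\eps^2|B|^2<1$ it is \emph{not} true that a boundary sum outside $A$ must exist, so the proposed fallback fails. Concretely, take $\eps=1/10$, $B=\{1,2,3\}$ and $A=\{2,3,4,5,6\}$: then $|B|=(1/2+\eps)|A|$, but $B+B=A$, so there are no pairs $(b_1,b_2)\in B^2$ with $b_1+b_2\notin A$, while $\eps^2|B|^2=0.09>0$. Thus in this regime the statement as literally written fails, and no patch along the lines you suggest can work; a correct treatment must either impose that $\eps|B|$ is larger than an absolute constant (which is how the lemma is used in this paper, where it is applied with $|B|$ of order $\lambda k$ and $\eps$ far from this degenerate range) or weaken the conclusion by an additive $O(1)$ term. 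So: right method, but the iteration bound needs to be restated honestly, and the edge-case step as proposed is genuinely false.
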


\begin{lemma}[Corollary~3.5 of~\cite{C19}]\label{satstability}
Let $m \in \N$ and $0 < \eps < 2^{-10}$. If $A,B \subset \mathbb{Z}$, with $|A| \le m$ and $(1-\eps)m / 2 \le |B| \le (1+2\eps)m / 2$, then one of the following holds:
\begin{itemize}
\item[$(a)$] There are at least $4 \eps^2 m^2$ pairs $(b_1,b_2)\in B^2$ such that $b_1+b_2\not\in A$.
\item[$(b)$] There is an arithmetic progression $P$ with $|P| \le m / 2 + 2^5 \eps m$, and a set $T \subset B$ with $|T| \le 8\eps m$ such that $B \setminus T \subset P$. 
\end{itemize}
\end{lemma}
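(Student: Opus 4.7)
My plan is to assume that case (a) of the lemma fails, i.e., fewer than $4\eps^2 m^2$ pairs $(b_1,b_2) \in B^2$ satisfy $b_1+b_2 \notin A$, and deduce case (b). The strategy is to extract a large subset $B' \subset B$ with $B'+B' \subseteq A$, and then invoke Freiman's $3k-4$ theorem (Lemma~\ref{lem:F3k4}) to place $B'$ inside a short arithmetic progression.

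For the extraction, I would define the bad degree $d(b) := |\{b' \in B : b + b' \notin A\}|$. Since $\sum_{b\in B} d(b) < 4\eps^2 m^2$, Markov's inequality gives that at most $4\eps m$ vertices satisfy $d(b) \ge \eps m$; removing these forms the first batch $T_1$. On the residual set $B \setminus T_1$, the maximum bad degree is below $\eps m$, and I would greedily cover the remaining bad pairs by further vertex removals, charging each removal against the ``fresh'' bad sums $b + \min(B \setminus T)$ and $b + \max(B \setminus T)$ it exposes, in the spirit of the peeling argument behind Lemma~\ref{supersat}. Since the total bad-pair budget is $4\eps^2 m^2$, this second batch $T_2$ has size at most $4\eps m$. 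Setting $T := T_1 \cup T_2$ and $B' := B \setminus T$, we obtain $|T| \le 8\eps m$ and $B' + B' \subseteq A$, hence $|B'+B'| \le |A| \le m$.

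With $|B'| \ge |B| - 8\eps m \ge (1-\eps)m/2 - 8\eps m$ and $\eps < 2^{-10}$, a direct check confirms $|B'+B'| \le m \le 3|B'| - 4$, so Lemma~\ref{lem:F3k4} applies and gives $B' \subseteq P$ for some arithmetic progression $P$ with
$$|P| \le |B'+B'| - |B'| + 1 \le m - |B'| + 1 \le m/2 + 9\eps m + 1 \le m/2 + 2^5 \eps m,$$
which is exactly case (b).

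The main obstacle is the greedy vertex-cover inside the extraction step. In a generic graph with $\Theta(\eps^2 m^2)$ edges the minimum vertex cover can be of size $\Theta(\eps m)$, so the $8\eps m$ target is essentially tight and a black-box bound is not enough. The proof must use the additive structure of $B \subset \Z$: the peeling argument of Lemma~\ref{supersat} must be invoked at each stage of the greedy process to argue that every further removal beyond the $4\eps m$ allowance in $T_2$ would force a new batch of extremal sums outside $A$, eventually pushing the total bad-pair count past $4\eps^2 m^2$ and contradicting the failure of case (a). Making this charging argument precise is the most delicate part of the proof.
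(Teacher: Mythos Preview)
The paper does not prove this lemma; it is quoted from Campos~\cite{C19} (Corollary~3.5 there) and used as a black box in Appendix~\ref{app:stability}. So there is no in-paper argument to compare your proposal against.

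On the merits of your proposal: the overall plan --- extract $B' \subset B$ with $B'+B' \subseteq A$, then apply Fre\u{\i}man's $3k-4$ theorem (Lemma~\ref{lem:F3k4}) --- is natural, and the deduction you give from $3k-4$ is correct once the extraction is granted (the check $m \le 3|B'| - 4$ and the bound $|P| \le m/2 + 2^5\eps m$ go through for $\eps < 2^{-10}$, modulo trivial edge cases when $m$ is a small constant).

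But the extraction is the entire content of the lemma, and it is not established. Requiring $B'+B' \subseteq A$ with $|B \setminus B'| \le 8\eps m$ is precisely asking that the bad-pair graph on $B$ (edges $\{b,b'\}$ with $b+b' \notin A$) have a vertex cover of size at most $8\eps m$, and as you observe, the edge count alone cannot give this. Your suggested fix --- charging each greedy removal to the ``fresh'' extremal sums $b+\min(B\setminus T)$ and $b+\max(B\setminus T)$ in the style of Lemma~\ref{supersat} --- does not work as stated: those sums are indeed distinct, but there is no reason they should lie \emph{outside} $A$, so there is nothing to charge against the bad-pair budget. The peeling mechanism behind Lemma~\ref{supersat} only forces sums outside $A$ while the current set has size strictly above $(|A|+1)/2$; since $|B|$ may start as low as $(1-\eps)m/2$, and in any case drops below $(m+1)/2$ after about $\eps m$ removals, the charging runs out long before you reach $8\eps m$ removals, and the residual set can still contain bad pairs. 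As written, then, the proposal is a strategy with an acknowledged and genuinely unfilled gap at its one non-routine step.
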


Let us say that a set $B \subset [n]$ is $(\eps,m)$\textit{-close to an arithmetic progression} if there is an arithmetic progression $P$ with $|P| \le m / 2 + 2^5 \eps m$, and a set $T \subset B$ with $|T| \le 8 \eps m$ such that $B \setminus T \subset P$. Recall also from~\eqref{def:Lambda} the definition of $\Lambda$. 

\begin{proof}[Proof of Theorem~\ref{thm:M:stability}]
Set $Y := [n]$, $\eps := 4\gamma$ and $m := \lambda k \ge 2^{120} \lambda^2 (\log n)^3$, and let $\cA$ be the family of sets given by Theorem~\ref{thm:M:containers}. We prove the theorem via three simple claims.

\medskip
\noindent \textbf{Claim 1:} For every pair $(A,B) \in \cA$, either 
\begin{itemize}
\item[$(a)$] $|B| \le (1-\eps)\lambda k / 2$ or
\item[$(b)$] $|B| \le (1+2\eps)\lambda k / 2$, and $B$ is $(\eps,\lambda k)$-close to an arithmetic progression. 
\end{itemize}

\begin{proof}[Proof of Claim~1]
To see this, let $(A,B)\in \cA$ and suppose that $|B| \ge (1-\eps)\lambda k / 2$. By Theorem~\ref{thm:M:containers}$(ii)$, there are at most $\eps^2|B|^2$ pairs $b_1,b_2 \in B$ with $b_1+b_2\not\in A$. By Lemma~\ref{supersat}, it follows that $|B| \le (1+2\eps)\lambda k / 2$. Now, by Lemma~\ref{satstability}, and noting that $\eps^2|B|^2 < 4\eps^2\lambda^2 k^2$, it follows that there is an arithmetic progression $P$ with $|P| \le \lambda k / 2 + 2^5 \eps \lambda k$, and a set $T \subset B$ with $|T| \le 8\eps \lambda k$ such that $B \setminus T \subset P$, as required.
\end{proof}

Now, recall from Theorem~\ref{thm:M:containers}$(i)$ that for each set $J \in \Lambda$, there exists $(A,B) \in \cA$ such that $A \subset J+J$ and $J \subset B$. We first consider the pairs $(A,B) \in \cA$ with $|B| \le (1-\eps)\lambda k / 2$. 

\medskip
\noindent \textbf{Claim 2:} There are at most
$$e^{-\eps k/2} {\lambda k / 2 \choose k}$$
sets $J \in \Lambda$ such that $J \subset B$ for some $(A,B) \in \cA$ with $|B| \le (1-\eps)\lambda k / 2$.

\begin{proof}[Proof of Claim~2]
Recalling the bound~\eqref{eq:number:of:original:containers} on the size of $\cA$, it follows (using~\eqref{eq:binomial:classic:too}) that the number of sets $J$ is at most  
\begin{equation}\label{eq:claim2:counting}
|\cA| \cdot \binom{(1-\eps)\lambda k / 2}{k} \le \exp \Big( 2^{16}\eps^{-2}\sqrt{\lambda k}(\log n)^{3/2} - \eps  k \Big) \binom{\lambda k / 2}{k}.
\end{equation}
Now, recalling that $\eps \ge 2^{6} \lambda^{1/6} k^{-1/6} (\log n)^{1/2}$, it follows that the right-hand side is at most $e^{-\eps k / 2} {\lambda k / 2 \choose k}$, as claimed. 
\end{proof}

Finally, we consider the pairs $(A,B) \in \cA$ that satisfy property~$(b)$ of Claim~1. Let us write $\Lambda'$ for the family of sets $J \in \Lambda$ such that $J \setminus T$ is contained in an arithmetic progression of size $\lambda k / 2 + 2^{5} \eps \lambda k$ for some $T \subset J$ with $|T| \le 2^{7} \eps k$.

\medskip
\noindent \textbf{Claim 3:} There are at most
$$e^{-\eps k} {\lambda k / 2 \choose k}$$
sets $J \in \Lambda \setminus \Lambda'$ with $J \subset B$ for some $(A,B) \in \cA$ such that $|B| \le (1+2\eps)\lambda k / 2$, and $B$ is $(\eps,\lambda k)$-close to an arithmetic progression.

\begin{proof}[Proof of Claim~3]
Let $(A,B) \in \cA$, and suppose that $B \setminus T \subset P$ for some arithmetic progression $P$ and set $T \subset B$ with 
$$|P| \le \lambda k / 2 + 2^5 \eps \lambda k \qquad \text{and} \qquad |T| \le 8 \eps \lambda k.$$
Observe that there at most
\begin{equation}\label{eq:farAPfinal}
\sum_{s \ge 2^7 \eps k} \binom{(1+2\eps)\lambda k / 2}{k-s}\binom{8\eps \lambda k}{s}
\end{equation}
sets $J \in \Lambda \setminus \Lambda'$ with $J \subset B$. Indeed, $J \setminus T \subset P$, so if $|J \cap T| \le 2^7 \eps k$ then $J \in \Lambda'$. 

Note that the right-hand side of~\eqref{eq:farAPfinal} is zero if $\lambda < 2^4$, so we may assume that $\lambda \ge 2^4$. Now, observe that 
$$\binom{(1+2\eps)\lambda k / 2}{k-s} \binom{8\eps \lambda k}{s} \le (1 + 2\eps)^k \bigg( \frac{2}{\lambda - 2} \cdot \frac{8e\eps \lambda k}{s} \bigg)^s {\lambda k / 2 \choose k}.$$
Hence, summing~\eqref{eq:farAPfinal} over $(A,B)\in \cA$, and noting that $|\cA| \le e^{\eps k}$ (cf.~\eqref{eq:claim2:counting}), it follows that there are at most
$$e^{3\eps k} {\lambda k / 2 \choose k} \sum_{s \ge 2^7 \eps k} \bigg( \frac{2^6 \eps k}{s} \bigg)^s \le e^{-\eps k} {\lambda k / 2 \choose k},$$
as claimed.
\end{proof}

Now, recall that, by Theorem~\ref{thm:M:containers}$(i)$, for every $J \in \Lambda$ there exists $(A,B) \in \cA$ such that $A \subset J+J$ and $J \subset B$. Combining Claims~1,~2 and~3, it follows that
$$|\Lambda \setminus \Lambda'| \le \Big( e^{-\eps k / 2} + e^{-\eps k} \Big) {\lambda k / 2 \choose k} \le e^{-\gamma k} {\lambda k / 2 \choose k},$$
as required. 
\end{proof}

\section{Two binomial inequalities}\label{app:binomial}

In this section we will prove Observation~\ref{obs:choices:boring}. For the reader's convenience, we repeat the statement. Recall that $\alpha := 2^{25} \lambda^2 \delta = 2^{-7} \lambda^{-1}$. 

\begin{obs}
Let $b \le \delta k$ and $\mu > 2$, with $\lambda / 2 \le \mu \le 2\lambda$, and let $s \le t \le 2^{22} \lambda^2 b$. Then
$${\lambda k / 2 - \mu b - s \choose k - b - s} \le e^{\alpha b} \bigg( \frac{\lambda - 2}{\lambda} \bigg)^{\mu b} \bigg( \frac{2}{\lambda - 2} \bigg)^{b} {\lambda k / 2 - s \choose k - s},$$
and
$${\lambda k / 2 - \mu b/2 - s/ 2 - t + \delta b \choose k - b - s} \le e^{\alpha b} \bigg( \frac{\lambda - 2}{\lambda} \bigg)^{\mu b/2} \bigg( \frac{2}{\lambda - 2} \bigg)^{b} \binom{\lambda k / 2 - s/ 2 - t}{k - s}.$$
\end{obs}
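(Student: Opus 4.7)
My plan is to prove each inequality by reducing the binomial coefficient on the left to the one on the right in two moves: first I will adjust the upper argument via \eqref{fact:binomial_classic}, and then adjust the lower argument via \eqref{eq:binomial:classic:too} (or, equivalently, by a direct expansion of the ratio). The two main factors on the right-hand side, $\left(\frac{\lambda - 2}{\lambda}\right)^{\mu b}$ (or its half-power variant) and $\left(\frac{2}{\lambda - 2}\right)^{b}$, will emerge respectively from these two moves, and the $e^{\alpha b}$ prefactor will absorb all of the lower-order multiplicative errors.

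For the first inequality, applying \eqref{fact:binomial_classic} with base $\lambda k/2 - s$ and shift $\mu b$ will give
\begin{equation*}
\binom{\lambda k/2 - \mu b - s}{k - b - s} \,\le\, \left(\frac{(\lambda-2)k + 2b}{\lambda k - 2s}\right)^{\mu b} \binom{\lambda k/2 - s}{k - b - s},
\end{equation*}
and then the ratio $\binom{\lambda k/2 - s}{k - b - s} / \binom{\lambda k/2 - s}{k - s}$, written as a product of $b$ factors, will be bounded above by $\left(\frac{2}{\lambda-2}\right)^b$ (this is exactly \eqref{eq:binomial:classic:too} applied with $c = 0$ and $d = b$). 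I will then split the first factor as $\left(\frac{\lambda-2}{\lambda}\right)^{\mu b}$ times $\left(1 + \frac{2b/(\lambda-2) + 2s/\lambda}{k - 2s/\lambda}\right)^{\mu b}$, and using $\mu \le 2\lambda$, $b \le \delta k$, $s \le 2^{22}\lambda^2 b$, and $\lambda \ge 3$, the latter correction is at most $\exp(O(\lambda^2 \delta b))$, which fits inside $e^{\alpha b}$ by the definition $\alpha = 2^{25}\lambda^2\delta$.

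For the second inequality the only new ingredient is absorbing the $+\delta b$ shift in the upper argument via \eqref{eq:another:binomial:inequality}: since the resulting denominator is at least $k/4$ (using the hypotheses $b \le \delta k$, $s \le t \le 2^{22}\lambda^2 b$ and $\lambda \ge 3$), this move will cost only a factor $\exp(O(\delta b))$, which is comfortably absorbed into $e^{\alpha b}$. The remaining reduction to $\binom{\lambda k/2 - s/2 - t}{k-s}$ then proceeds by exactly the same two-step procedure as above, with $\mu b$ replaced by $\mu b/2$ throughout the first step and the new base $\lambda k/2 - s/2 - t$ in place of $\lambda k/2 - s$. I expect the only non-trivial point to be bookkeeping — tracking all of the small multiplicative corrections and confirming that their combined contribution stays below $e^{\alpha b}$; the generous definition of $\alpha$, chosen precisely so that $\lambda^2 \delta b$ sits well below $\alpha b$, makes this a routine (if slightly tedious) verification.
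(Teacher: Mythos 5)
Your proposal is correct and follows essentially the same route as the paper's proof in Appendix~B: apply \eqref{fact:binomial_classic} to shift the upper argument by $\mu b$ (resp.\ $\mu b/2$), apply \eqref{eq:binomial:classic:too} to shift the lower argument by $b$, absorb the $+\delta b$ via \eqref{eq:another:binomial:inequality}, and bury the multiplicative errors (of size $\exp\big(O(\lambda^2 b^2/k)\big)$) in $e^{\alpha b}$. The bookkeeping indeed closes with the stated value of $\alpha$, so the only remaining work is the routine verification you anticipate.
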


\begin{proof}
To prove the first inequality, observe that, by~\eqref{fact:binomial_classic}, 
$${\lambda k / 2 - \mu b - s \choose k - b - s} \le \bigg( \frac{(\lambda - 2)k + 2b}{\lambda k - 2s} \bigg)^{\mu b} {\lambda k / 2 - s \choose k - b - s},$$
and that, by~\eqref{eq:binomial:classic:too},
$${\lambda k / 2 - s \choose k - b - s} \le \bigg( \frac{2k - 2s}{(\lambda - 2)k} \bigg)^{b} \binom{\lambda k / 2 - s}{k - s} \le \bigg( \frac{2}{\lambda - 2} \bigg)^{b} \binom{\lambda k / 2 - s}{k - s}.$$ 
Since $s \le 2^{22} \lambda^2 b$ and $\mu \le 2\lambda$, it follows that
$${\lambda k / 2 - \mu b - s \choose k - b - s} \le \exp\bigg( \frac{2^{25} \lambda^2 b^2}{k} \bigg) \bigg( \frac{\lambda - 2}{\lambda} \bigg)^{\mu b} \bigg( \frac{2}{\lambda - 2} \bigg)^{b} \binom{\lambda k / 2 - s}{k - s},$$
as claimed. For the second inequality, observe first that $s \le t \le 2^{22} \lambda^2 b \le k/8$, and that $\mu b / 2 \le \lambda b \le k/8$. Since $\lambda \ge 3$, it follows, using~\eqref{eq:another:binomial:inequality} 
(with $c = \delta b$), that 
$${\lambda k / 2 - \mu b/2 - s/ 2 - t + \delta b \choose k - b - s} \le e^{4\delta b} {\lambda k / 2 - \mu b/2 - s/ 2 - t  \choose k - b - s}.$$
We now repeat the proof of the first part: by~\eqref{fact:binomial_classic}, we have
$${\lambda k / 2 - \mu b/2 - s/2  - t \choose k - b - s} \le \bigg( \frac{(\lambda - 2)k + 2b + s - 2t}{\lambda k - s - 2t} \bigg)^{\mu b} {\lambda k / 2 - s/2 - t \choose k - b - s}.$$
and, by~\eqref{eq:binomial:classic:too},
$${\lambda k / 2 - s/2 - t \choose k - b - s} \le \bigg( \frac{2k - 2s}{(\lambda - 2)k + s - 2t} \bigg)^{b} \binom{\lambda k / 2 - s/2 - t}{k - s}.$$ 
Since $s \le t \le 2^{22} \lambda^2 b$ and $\mu \le 2\lambda$, it follows that
$${\lambda k / 2 - \mu b/2 - s/ 2 - t + \delta b \choose k - b - s} \le \exp\bigg( \frac{2^{25} \lambda^2 b^2}{k} \bigg) \bigg( \frac{\lambda - 2}{\lambda} \bigg)^{\mu b} \bigg( \frac{2}{\lambda - 2} \bigg)^{b} \binom{\lambda k / 2 - s/2 - t}{k - s},$$
as claimed.
\end{proof}

\section{The proof in the case $\lambda k \ge n$}\label{app:boring}

In this section we will deal with some minor technical issues that arise when $\lambda k \approx 2n$, and hence complete the proof of Theorem~\ref{thm:structure:quant}. First, we need the following variant of Lemma~\ref{lem:red-to-int}. 

\begin{lemma}\label{lem:red-to-int:app}
Let $\lambda \ge 3$ and $n,k \in \N$, with $n \le \lambda k \le 2n$. Then
$$|\mathcal{F} \setminus \Lambda^*| \le \big( n - \lambda k / 2 \big) \cdot |\cI|.$$
\end{lemma}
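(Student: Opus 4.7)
The plan is to mimic the proof of Lemma~\ref{lem:red-to-int} essentially verbatim, reusing the same map $\varphi : \cF \setminus \Lambda^* \to \cI$ that sends $A$ to $\{j \in \Z : a + jd \in A\}$ for a chosen pair $(a,d)$ satisfying the conditions given there. That part of the argument did not rely on $\lambda k$ being small compared to $n$, so we can invoke it unchanged to reduce the lemma to the task of bounding $|\varphi^{-1}(S)|$ for each $S \in \cI$.

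The key observation is that for every $S \in \cI$ the span $L := \max(S) - \min(S) = \lambda k/2 + r(S)$ satisfies $L \ge \lambda k/2$, since $r(S) \ge c(\lambda,\eps) > 0$ by Definition~\ref{def:I}. Now $|\varphi^{-1}(S)|$ is bounded by the number of pairs $(a,d) \in \N^2$ such that $\{a + jd : j \in S\} \subset [n]$. Any such pair must have $1 \le a + \min(S)\, d$ and $a + \max(S)\, d \le n$, which forces $d \cdot L \le n - 1$.

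This is where the hypothesis $\lambda k \ge n$ comes in: combined with $L \ge \lambda k/2$, it gives $L \ge n/2$, so $d \cdot L \le n - 1$ forces $d = 1$. For $d = 1$, the number of valid shifts $a$ is exactly $n - L$, and using $L \ge \lambda k / 2$ we conclude
\[
|\varphi^{-1}(S)| \,\le\, \max(0,\, n - L) \,\le\, n - \lambda k / 2.
\]
Summing over $S \in \cI$ yields $|\cF \setminus \Lambda^*| \le (n - \lambda k/2) \cdot |\cI|$, as required.

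There is no real obstacle here; the proof is almost entirely a recycling of Lemma~\ref{lem:red-to-int}, with the single substantive change being the observation that the assumption $\lambda k \ge n$ collapses the two-parameter family of admissible $(a,d)$ to the one-parameter family $\{(a,1)\}$, which is precisely what produces the sharper bound of $n - \lambda k/2$ in place of $n^2/k$.
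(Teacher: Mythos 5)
Your proposal is correct and follows essentially the same route as the paper's proof, which simply repeats the argument of Lemma~\ref{lem:red-to-int} with $d := 1$ and counts the at most $n - \lambda k/2$ admissible translates of each $S \in \cI$. The only (cosmetic) difference is that you keep the general pair $(a,d)$ and deduce $d = 1$ during the preimage count from the span bound $\max(S) - \min(S) > \lambda k/2 \ge n/2$, rather than building $d = 1$ into the definition of $\varphi$; this is a perfectly valid, and if anything slightly more self-contained, way to make the same point.
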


\begin{proof}
We repeat the proof of Lemma~\ref{lem:red-to-int}, except we now set $d := 1$. To be precise, let $A \in \cF \setminus \Lambda^*$, choose $a \in \N$ minimal such that the sets
$$\big\{ x \in A : x \le a \big\} \qquad \text{and} \qquad \big\{ x \in A : x > a + \lambda k / 2 \big\}$$
are both non-empty and together contain at most $\delta k$ elements, define $\varphi(A) := A - a$, and observe that $\varphi(A) \in \cI$ (cf.~the proof of Lemma~\ref{lem:red-to-int}). Now, for each set $S \in \cI$, there are at most $n - \lambda k / 2$ choices for $a$ such that $a + S \subset [n]$, and therefore 
$$|\varphi^{-1}(S)| \le n - \lambda k / 2$$
for every $S \in \cI$, as required.
\end{proof}

We also need the following variant of Lemma~\ref{lem:easy:lower}.

\begin{lemma}\label{lem:easy:lower:app}
Let $\lambda \ge 3$ and $n,k \in \N$, with $n \le \lambda k \le 2n$. Then
$$\big| \big\{ A \subset [n] : |A| = k, \, |A+A| \le \lambda k  \big\} \big| \ge \frac{1}{\lambda^2} \cdot \big( n - \lambda k / 2 + 1 \big) \binom{\lambda k/2}{k}.$$
\end{lemma}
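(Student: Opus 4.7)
The plan is to repeat the construction used for Lemma~\ref{lem:easy:lower}, but restricted to arithmetic progressions of common difference $d = 1$. The key point is that when $n \le \lambda k \le 2n$, the interval $[n]$ contains exactly $n - \lambda k / 2 + 1$ arithmetic progressions of length $\lambda k / 2$ with difference $1$ (namely the intervals $[a+1, a + \lambda k / 2]$ for $0 \le a \le n - \lambda k / 2$), so this is the natural replacement for the factor $n^2/2\lambda k$ appearing in Lemma~\ref{lem:easy:lower}.

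For each such interval $P$, I would count the sets $A \subset P$ of size $k$ that contain both endpoints $\min(P)$ and $\max(P)$. Since $A + A \subset P + P$ is an interval of length $2|P| - 1 = \lambda k - 1$, every such $A$ satisfies $|A + A| \le \lambda k$. Moreover, the sets obtained from distinct values of $a$ are automatically distinct, because the endpoints of $A$ determine $a$. The number of choices of $A$ for each fixed $P$ is $\binom{\lambda k/2 - 2}{k-2}$.

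It then remains to compare $\binom{\lambda k/2 - 2}{k-2}$ with $\binom{\lambda k/2}{k}$. Using the identity $\binom{a}{b} = \frac{a(a-1)}{b(b-1)}\binom{a-2}{b-2}$ with $a = \lambda k/2$ and $b = k$ gives
\[
\binom{\lambda k/2 - 2}{k - 2} \;=\; \frac{k(k-1)}{(\lambda k/2)(\lambda k/2 - 1)} \binom{\lambda k/2}{k} \;\ge\; \frac{1}{\lambda^2} \binom{\lambda k/2}{k},
\]
for $k \ge 2$ (the case $k = 1$ is trivial, since then every singleton in $[n]$ has doubling $1 \le \lambda$). Multiplying by the $n - \lambda k / 2 + 1$ choices of $a$ yields the claimed bound. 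There is no real obstacle here: the statement is purely a finite counting exercise and the factor $1/\lambda^2$ (rather than the $1/\lambda^3$ appearing in Lemma~\ref{lem:easy:lower}) comes precisely from the absence of the factor $1/k$ that was previously absorbed in counting common differences $d$.
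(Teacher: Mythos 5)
Your proposal is correct and is essentially the paper's own argument: both count, for each of the $n - \lambda k/2 + 1$ length-$(\lambda k/2)$ intervals (difference-$1$ progressions) in $[n]$, the $k$-subsets containing both endpoints, note these are distinct with $|A+A| \le \lambda k$, and compare $\binom{\lambda k/2 - 2}{k-2}$ with $\frac{1}{\lambda^2}\binom{\lambda k/2}{k}$ via the same binomial identity. No issues.
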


\begin{proof}
We consider, for each arithmetic progression $P$ of length $\lambda k / 2$ in $[n]$, all subsets $A \subset P$ of size $k$ containing both endpoints of $P$. All of these sets are distinct, and all satisfy $|A+A| \le \lambda k$. There are $n - \lambda k / 2 + 1$ choices for the arithmetic progression, and therefore
$$|\Lambda| \ge \big( n - \lambda k / 2 + 1 \big) {\lambda k / 2 - 2 \choose k - 2} \ge \frac{1}{\lambda^2} \cdot \big( n - \lambda k / 2 + 1 \big) \binom{\lambda k/2}{k},$$
as claimed.
\end{proof}

We can now deduce Theorem~\ref{thm:structure:quant} in the case $\lambda k \ge n$.  

\begin{proof}[Proof of Theorem~\ref{thm:structure:quant}]
Observe that the theorem is trivial if $\lambda k / 2 \ge n$ (since in this case $P = [n]$ satisfies the conditions), so let us assume that  $n \le \lambda k < 2n$. Replacing Lemma~\ref{lem:red-to-int} by Lemma~\ref{lem:red-to-int:app} in the proof of Lemma~\ref{lem:stability}, and recalling that $\eps > e^{-\delta^2 k}$, we obtain
$$|\Lambda \setminus \Lambda^*| \le \big( n - \lambda k / 2 \big) \cdot |\cI| + \frac{\eps}{2\lambda^2} {\lambda k / 2 \choose k}.$$
Now, by Lemmas~\ref{lem:sparse:quant} and~\ref{lem:dense:quant}, we have 
$$|\cI| = |\cS| + |\cD| \le \frac{\eps}{2\lambda^2} {\lambda k/2 \choose k}.$$
Finally, by Lemma~\ref{lem:easy:lower:app}, it follows that
$$|\Lambda \setminus \Lambda^*| \le \frac{\eps}{\lambda^2} \cdot \big( n - \lambda k / 2 + 1 \big) \binom{\lambda k/2}{k} \le \eps|\Lambda|,$$
as required.
\end{proof}

\section*{Acknowledgements}

We are grateful to the anonymous referee for a very careful reading of the proof, and for numerous helpful suggestions which improved the presentation.

\medskip

\end{document}